\documentclass[a4paper]{amsart}
\usepackage[pdftex]{graphicx}
\usepackage{amsmath,amssymb,amsthm}
\usepackage{amsfonts}
\usepackage{mathrsfs}
\usepackage{mathtools}
\usepackage{hyperref}
\usepackage{xcolor}
\usepackage[T1]{fontenc}
\usepackage{tikz}
\usetikzlibrary{arrows.meta}

\theoremstyle{definition}
\newtheorem{thm}{Theorem}[section]
\newtheorem{prop}[thm]{Proposition}
\newtheorem{lem}[thm]{Lemma}
\newtheorem{cor}[thm]{Corollary}
\newtheorem{dfn}[thm]{Definition}
\newtheorem{rem}[thm]{Remark}
\newtheorem{prob}[thm]{Problem}
\newtheorem{exam}[thm]{Example}

\newcommand{\C}{\mathbb{C}}
\newcommand{\R}{\mathbb{R}}
\newcommand{\Z}{\mathbb{Z}}
\newcommand{\Q}{\mathbb{Q}}
\newcommand{\A}{\mathbb{A}}
\newcommand{\vect}{\mathbf{v}}
\newcommand{\head}{\mathbf{h}}
\newcommand{\tail}{\mathbf{t}}
\newcommand{\Spec}{\operatorname{Spec}}
\newcommand{\FB}{\operatorname{FB}}
\newcommand{\Hilb}{\operatorname{Hilb}}

\newcommand{\Crit}{\operatorname{Crit}}

\newcommand{\Vol}{\operatorname{Vol}}
\newcommand{\Star}{\operatorname{Star}}
\newcommand{\GL}{\operatorname{GL}}
\newcommand{\SL}{\operatorname{SL}}

\subjclass[2020]{Primary 14E15; Secondary 14B10, 14M25, 14L30}

\begin{document}
\title{Exceptional loci of F-blowups and $G$-Hilbert schemes}
\author{Enrique Chávez-Martínez, Yutaro Kaijima and Takehiko Yasuda}

\address{Instituto de Ingeniería y Tecnología, UACJ, Ciudad Juárez, Chihuahua,
México}
\email{enrique.chavez@uacj.mx}

\address{Department of Mathematics, Graduate School of Science, The University of Osaka Toyonaka, Osaka 560-0043, JAPAN}
\email{u797381f@ecs.osaka-u.ac.jp}

\address{Department of Mathematics, Graduate School of Science, the University
of Osaka, Toyonaka, Osaka 560-0043, Japan}
\address{Kavli Institute for the Physics and Mathematics of the Universe, The University of Tokyo, 5-1-5 Kashiwanoha, Kashiwa, Chiba, 277-8583, Japan}
\email{yasuda.takehiko.sci@osaka-u.ac.jp}
\maketitle

\begin{abstract}
We study exceptional loci of F-blowups of normal toric varieties. In the $\Q$-factorial case, this study amounts to studying the exceptional loci of $G$-Hilbert schemes. We give a formula for the dimension of the center of a prime divisor on the F-blowup in terms of combinatorial data, together with an algorithm for computing it. Moreover, we study the relation between F-blowups and essential divisors for three-dimensional terminal singularities and canonical singularities. Finally, we give a simple condition ensuring that a prime divisor over the given toric variety has a positive-dimensional center on the F-blowup.
\end{abstract}

\section{Introduction}


%
%

The primary objective of this paper is to understand the exceptional set of the F-blowup, associated to a $\Q$-factorial affine toric variety. To a toric variety $X$, we can associate another (possibly non-normal) toric variety $\FB_{(\infty)}(X)$ in a canonical way, called the limit F-blowup \cite{Yas12}. It is defined as the universal birational flattening of a Frobenius-like endomorphism of $X$. If $X$ is $\Q$-factorial and affine, equivalently, if $X$ is associated to  a simplicial cone, then $X$ is a quotient variety by a finite abelian group and $\FB_{(\infty)}(X)$ is isomorphic to the $G$-Hilbert scheme.  In some special cases, this birational transform provides a nice resolution of singularities of the given toric variety such as minimal resolutions and crepant resolutions. However, it is not smooth or even normal, in general.  


Let us denote the normalization of $\widetilde{\FB}_{(\infty)}(X)$ by $\widetilde{X}$. 
Besides the problem of smoothness, it is also interesting to study the geometry of the exceptional locus in $\widetilde{X}$.
 Recently, the first and third named authors together with Duarte proposed the following problem in this direction: 


\begin{prob}[{\cite[Problem 1.1]{CMDY25}}]\label{FB-ess}
Do all essential divisors over $X$ appear on $\widetilde{X}$ as prime divisors ?
\end{prob}

An essential divisor means a divisor over $X$ whose center on every desingularization of $X$ is an irreducible component of the exceptional locus. This notion appears in the famous Nash problem on arc families. We refer the reader to \cite{PS15} for more details on the Nash problem. 
Problem \ref{FB-ess} itself is also regarded as an analog of the Nash problem, since the F-blowup can be interpreted as a parameter space of some sort of jets.


\begin{rem}
A similar problem for higher Nash blowups was addressed in \cite{CM23}. There are also studies on toric resolutions that uses only essential divisors  \cite{BGS95,Dai02,SS23,Sat25}.
\end{rem}

In \cite{CMDY25}, it was proved that if $X$ admits a special type of resolution called a moderate toric resolution, then Problem \ref{FB-ess} has a positive answer.\footnote{Precisely speaking, the mentioned result in  \cite{CMDY25} uses the notion of BGS essential divisors, a variant of essential divisors. For $\Q$-factorial toric varieties,  BGS essential divisor is the same as the usual essential divisor.} However, as shown by the second named author and Yamamoto \cite{KY26},  a toric variety does not generally admit a moderate toric resolution. For example, a three-dimensional $\Q$-factorial toric variety with only terminal singularities has no moderate toric resolution. Thus, it is desirable to have a description of the exceptional locus that holds true even when a moderate toric resolution does not exist.

%

In this paper, we give several results on the geometry of exceptional loci in normalized limit F-blowups $\widetilde{X}$ with particular emphasis on the case of  $\Q$-factorial toric varieties. Note that our results on this case are also regarded as results on the $G$-Hilbert scheme,  since $\widetilde{X}$ is isomorphic to the normalization of the $G$-Hilbert scheme as mentioned above. 
To state our main results, we now set up some notation.
 Let $M$ and $N$ be free abelian groups of rank $d$ which are dual to each other. Let $\sigma\subset N_\R=N\otimes \R$ be a strongly convex, nondegenerate, rational polyhedral cone and let $X$ be the associated affine toric variety. 
The normalized limit F-blowup $\widetilde{X}$ corresponds to a fan $\Delta$ that is a subdivision of $\sigma$. Essential divisors over $X$ corresponds to the elements of the Hilbert basis of the monoid $\sigma\cap N$ other than the primitive generators of the one-dimensional faces of $\sigma$.  
With this notation, 
Problem \ref{FB-ess} is rephrased  as follows: 
Does the ray $\R_{\geq0}w$ for every element $w$ of the Hilbert basis  belong to the fan $\Delta$?

The key notion for our results is the one of $w$-critical arrows introduced in \cite{CMDY25}. 
For $0\neq w\in\sigma\cap N$, a $w$-critical arrow is an ordered pair $\alpha=(\head_\alpha,\tail_\alpha)$ of two points $\head_\alpha,\tail_\alpha$ in the dual cone $\sigma^\vee\subset M_\R$ such that the associated vector $\vect_\alpha=\head_\alpha-\tail_\alpha$ is an element of $M\cap w^\perp$ and some extra conditions are satisfied. 
Let  $V_w \subset M_\R$  be the subspace spanned by the vectors associated to  $w$-critical arrows. The following theorem is our first main result. 


\begin{thm}[Corollary \ref{min}]
With the above notation, let $\mu$ be the minimal face of $\sigma$ containing $w$ and  let $\tau\in\Delta$ be the minimal cone containing $w$. Then, 
\[
\dim \tau + \dim (\mu^\perp +V_w) =d.
\]
Equivalently, if $E_w$ denotes the divisor over $X$ corresponding to $w$, then the center of $E_w$ on $\widetilde{X}$ has dimension 
$\dim (\mu^\perp +V_w) $.
\end{thm}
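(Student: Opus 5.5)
The plan is to use the standard description of the normalized limit F-blowup as the normal fan of a polytope-like object, namely a Gröbner/state-polytope type construction. Recall from \cite{Yas12} and \cite{CMDY25} that $\Delta$ is the coarsest subdivision of $\sigma$ on which, for each $w$, the initial data is locally constant. This data is the set of $w$-minimal monomials, i.e. the "state" $S_w$ determined by the Frobenius-type flattening. Concretely, two vectors $w,w'$ in the relative interior of $\sigma$ lie in the relative interior of the same cone of $\Delta$ if and only if they induce the same initial states. The minimal cone $\tau\ni w$ is then the closure of the set of $w'$ inducing the same state as $w$.

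First, I will describe $\tau$ locally near $w$. A vector $w'=w+\epsilon u$ with $u\in N_\R$ small gives the same initial state as $w$ exactly when two conditions hold:
\begin{enumerate}
\item $w'$ stays in the same face-interior of $\sigma$, which amounts to $\langle u,m\rangle=0$ for $m\in\mu^\perp\cap\sigma^\vee$, i.e. $u\in(\mu^\perp)^\perp$ locally;
\item no tie among monomials that $w$ sees is broken.
\end{enumerate}
The ties relevant to the flattening are exactly those recorded by $w$-critical arrows: for a $w$-critical arrow $\alpha$, one has $\langle w,\vect_\alpha\rangle=0$, and perturbing so that $\langle u,\vect_\alpha\rangle\neq0$ changes which of $\head_\alpha$ and $\tail_\alpha$ is selected, hence changes the cone. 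The key step, which should follow from the characterization of critical arrows in \cite{CMDY25} (the extra conditions being designed for precisely this), is the converse: if $\langle u,\vect_\alpha\rangle=0$ for all $w$-critical $\alpha$ and $u\in\mu^{\perp\perp}$, then the state does not change. This gives that the linear span of $\tau$ equals $(\mu^\perp+V_w)^\perp\subset N_\R$.

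Second, I will take dimensions. Since $\tau$ is a full-dimensional cone in its linear span, namely the span of $\tau$ being locally the solution set above around the interior point $w$, we get
\[
\dim\tau=d-\dim(\mu^\perp+V_w).
\]
The center statement then follows from orbit–cone correspondence: the center of $E_w$ on $\widetilde X$ is the orbit closure $V(\tau)$, of dimension $d-\dim\tau$.

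The main obstacle is the converse in the first step: showing that non-critical ties (pairs with $\vect\in w^\perp$ that are not critical arrows) never split the cone near $w$. I would first attempt this by reducing any such tie to a chain of critical arrows. This would use the minimality or extremality conditions in the definition, for instance by taking a tie of minimal length and arguing it is critical, or otherwise that it is dominated by points outside the relevant region so that it is irrelevant to the state. I would also need that vectors in $\mu^\perp$ account exactly for the face condition, handled via the equality $\sigma^\vee\cap w^\perp=\sigma^\vee\cap\mu^\perp$.
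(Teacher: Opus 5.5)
Your overall architecture matches the paper's. You prove $\langle\tau\rangle_\R=(\mu^\perp+V_w)^\perp$ by two inclusions, count dimensions, and read off the center via the orbit--cone correspondence, as in Corollary \ref{min}. The inclusion $\langle\tau\rangle_\R\subset\langle\mu\rangle_\R\cap(V_w)^\perp$ is fine; it is Proposition \ref{min-cone-inclusion}, i.e.\ \cite[Theorem 5.7]{CMDY25}.

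The gap is the reverse inclusion. This is the entire new content of Theorem \ref{arrow-dim}, and you leave it as a hope (``reduce any tie to a chain of critical arrows'', ``take a tie of minimal length''). Length-minimality is not what makes a tie relevant. Your framework also presupposes that \emph{every} cone of $\Delta$, not just every chamber, is cut out by a notion of ``state'' for non-general $w'$. The only criterion available is Proposition \ref{same-chamber}, which compares general points, i.e.\ chambers.

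The missing idea is to argue by contradiction and pass to two chambers adjacent to $\tau$.
\begin{itemize}
\item Suppose $u\in\langle\mu\rangle_\R\cap(V_w)^\perp\setminus\langle\tau\rangle_\R$, with $\delta$ small so that $w\pm\delta u\in\mu^\circ$.
\item Every cone of $\Star(\tau)$ is strongly convex, so none contains both $\pm\widetilde{u}$. Hence for a tie-breaker $b$ and $0<\varepsilon\ll\delta$, the general points $w_1=w+\delta u+\varepsilon b$ and $w_2=w-\delta u+\varepsilon b$ lie in different chambers of $\Delta$ whose closures contain $w$. So $\Xi_{w_1}\neq\Xi_{w_2}$.
\item Choose $\head\in\Xi_{w_1}\setminus\Xi_{w_2}$ and a $\tail$ witnessing $\head\in\Xi_{w_1}$. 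This gives an integral arrow with $w_1(\head-\tail)>0>w_2(\head-\tail)$, which forces $\head-\tail\in w^\perp$.
\item Since $\head$ is $w_2$-minimal in its coset, no $\head$-integral point lies at lower $w$-level.
\item This is Proposition \ref{exists-crit-arrow}. It supplies a $w$-critical arrow $\alpha$ with $w_1(\vect_\alpha)\neq w_2(\vect_\alpha)$, contradicting $w_1-w_2=2\delta u\in(V_w)^\perp$.
\end{itemize}
This also settles your worry about ``non-critical ties''. You never need to analyze arbitrary ties, because a tie that actually separates two chambers adjacent to $w$ automatically sits at the minimal $w$-level of its coset.
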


In particular, if there are $d-1$ linearly independent $w$-critical arrows, then $E_w$ appears on $\widetilde{X}$ as a prime divisor. If there is at least one $w$-cirtical arrow, then the center of $E_w$ on $\widetilde{X}$ has positive dimension, and $E_w$ does not contract to a point on $\widetilde{X}$.
We give an algorithm for computing the dimension of $\mu^\perp +V_w$ and apply it to get a sufficient condition for the existence of $w$-critical arrows (Theorem \ref{polytope}).  Using this condition, we obtain Theorems \ref{3-dim-term-intro}, \ref{3-can-div-intro} and \ref{example-intro}  below as partial answers to Problem \ref{FB-ess}:

\begin{thm}[Theorem \ref{3-dim-terminal}]\label{3-dim-term-intro}
Let $X$ be a three-dimensional $\Q$-factorial terminal affine toric variety. Then every essential divisor over $X$ appears on $\widetilde{X}$ as a prime divisor.
\end{thm}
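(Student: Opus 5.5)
We reduce the statement to the dimension formula of the first main theorem (Corollary \ref{min}). Let $X$ correspond to a simplicial three-dimensional cone $\sigma$ with only terminal singularities, and let $w$ be an element of the Hilbert basis of $\sigma\cap N$ that is not a primitive ray generator. Since terminal means that $\sigma\cap N$ has no lattice points on or below the hyperplane through the primitive generators except those generators themselves, $w$ lies strictly above that plane. We want to show that $\R_{\ge 0}w\in\Delta$, that is, $\dim\tau=1$. By the formula $\dim\tau+\dim(\mu^\perp+V_w)=3$, it suffices to show $\dim(\mu^\perp+V_w)=2$.

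\textbf{Reducing to $w$ in the interior.} If $w$ lies on a proper face $\mu$ of $\sigma$, then $\mu$ is a two-dimensional simplicial cone. For the toric surface attached to it, the terminal condition forces smoothness, so the Hilbert basis of $\mu\cap N$ consists only of the two ray generators. This contradicts the choice of $w$. Hence $\mu=\sigma$ and $\mu^\perp=0$, and we must produce two linearly independent $w$-critical arrows, with vectors in $w^\perp\cap M$.

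\textbf{Producing the arrows.} We use the standard classification of three-dimensional terminal cyclic quotient singularities (White--Frumkin / Morrison--Stevens). Up to $\GL_3(\Z)$, $\sigma$ is spanned by $e_1,e_2,e_3$ with $N=\Z^3+\Z\frac1r(1,-1,a)$ and $\gcd(a,r)=1$. Equivalently, every lattice point of $\sigma$ lies on a lattice plane $x_1+x_2=k$ relative to the simplex, so the width of $\sigma$ in the direction $e_1^*+e_2^*$ is $1$. The Hilbert basis elements other than the rays are then the points $\frac1r(i,r-i,\overline{ai})$, all lying on the plane $x_1+x_2=1$.

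For such a $w$, we apply the sufficient criterion Theorem \ref{polytope}, which gives existence of $w$-critical arrows from polytope data. The natural candidates come from $m=e_1^*+e_2^*-e_3^*$-type lattice vectors and the differences of dual-cone generators that pair to $0$ with $w$. Concretely, one arrow is built from the functional $e_1^*-e_2^*$ shifted appropriately so that it is orthogonal to $w$. The second arrow uses $e_3^*$ combined with multiples of $e_1^*,e_2^*$. We check that head and tail lie in $\sigma^\vee$ and verify the extra critical conditions via Theorem \ref{polytope}. Because all the nontrivial Hilbert basis elements lie on one plane, the relevant polytope (the part of $\sigma^\vee$ cut by $w$-levels) has two independent edge directions in $w^\perp$. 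We then conclude $\dim V_w=2$.

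\textbf{Main obstacle.} The hard step is verifying the hypotheses of Theorem \ref{polytope} uniformly in $(r,a,i)$, in particular showing that the candidate arrow vectors are genuinely critical. This requires their heads and tails to be the appropriate minimal and maximal points with respect to $w$, for every Hilbert basis element $w$. I would first treat the extreme elements $w$ adjacent to the rays, where explicit coordinates are easy. I would then use the symmetry $i\mapsto r-i$ and an induction along the Hilbert basis ordered by $i$, tracking how the relevant vertices of the polytope change. If the uniform argument stalls, a fallback is a direct computation of the $G$-Hilbert scheme fan for $\frac1r(1,-1,a)$ via Nakamura-type $G$-graphs, which in the terminal case is known to be manageable.
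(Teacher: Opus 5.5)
Your reductions are sound: the dimension formula of Corollary~\ref{min}, the observation that $w$ cannot lie on a proper face (two-dimensional faces of a terminal cone are regular), and the normal form $N=\Z^3+\Z\frac1r(1,-1,a)$, $\sigma=\R_{\ge0}^3$, whose nontrivial Hilbert basis elements are $w=\frac1r(i,r-i,\overline{ai})$. But the argument stops exactly at the step that carries the content, and you have misplaced the difficulty. Checking the hypothesis of Theorem~\ref{polytope} is immediate and uniform. Indeed $u=(1,1,0)$ lies in $M$, $(u,w)=1$ for every such $w$, $(u,e_1)=(u,e_2)=1$ and $(u,e_3)=0$, so $\Crit_1^w$ and $\Crit_2^w$ are nonempty. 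However, Theorem~\ref{polytope} is only an existence statement. It cannot certify that specific candidate vectors are critical; for that you need Proposition~\ref{Crit-set}, i.e.\ $(m,-v_i)<c_{i,w}$. Your candidates are never written down, and $e_1^*-e_2^*=(1,-1,0)$ is not even in $M$ once $r>2$. The remark that the level polytope has two independent edge directions in $w^\perp$ proves nothing. Edges of $\Lambda_{(=c)}$ are not critical arrows: criticality needs an integral vector and no $\alpha$-integral points at lower levels.

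The genuine gap is linear independence. $\Crit_1^w$ lies in $\{m_2\ge0,\ m_3\ge0\}\cap w^\perp$ and $\Crit_2^w$ lies in $\{m_1\ge0,\ m_3\ge0\}\cap w^\perp$. An element of one is parallel to an element of the other exactly when both lie on $\ell=\R(-(r-i),i,0)=w^\perp\cap\{m_3=0\}$. So the two arrows from Theorem~\ref{polytope} could a priori be parallel, and nothing in your sketch rules this out.

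The paper closes this step by contradiction, in its coordinates $v_3=(1,p,q)$, $w=(1,m_k,k)$. If $\dim V_w=1$, the only possible direction is $(k,0,-1)$, which then lies in $\Crit_3^w$ and forces $c_{3,w}>q-k$. The vector $(k-m_k,1,-1)$ lies in $\delta_3^\vee\cap w^\perp$ (using $m_k\le k$) and has $(\,\cdot\,,-v_3)\le q-k$ (using $m_k\le p$). So it is also in $\Crit_3^w$, yet not parallel to $(k,0,-1)$.

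The same works in your coordinates. If $\dim V_w=1$, then $\Crit^w\subset\ell$. Hence $(-(r-i),i,0)$, the primitive vector of $M$ on $\ell\cap\delta_1^\vee$, lies in $\Crit_1^w$, and $c_{1,w}>r-i$. Now put $q_0=\lfloor ai/r\rfloor$ with $1\le a\le r-1$. The vector $(q_0-a,q_0,1)$ lies in $\delta_1^\vee\cap M\cap w^\perp$. It satisfies $a-q_0\le r-i$, because $ai/r-(a+i-r)=(r-a)(r-i)/r>0$. So it is a critical vector off $\ell$, a contradiction. Some explicit lattice computation of this kind is indispensable. The $G$-graph fallback would be a different proof altogether (cf.\ Remark~\ref{Kedzierski}), not a completion of this one.
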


There is also a different proof of this theorem that is based on K\k{e}dzierski's result about the $G$-Hilbert scheme \cite{Ked11} (see Remark \ref{Kedzierski}). Note also that this result is  not covered by the result mentioined above in \cite{CMDY25}, since this class of singularities does not admit a moderate toric resolution \cite[Corollary 3.7]{KY26} as already mentioned. 
For a slightly larger class of singularities, we obtain the following weaker result:

\begin{thm}[Corollary \ref{3-can-div}]\label{3-can-div-intro}
Let $X=\A^3/G$ be a three-dimensional canonical abelian quotient singularity. Then every essential divisor over $X$ has center of positive dimension  on $\widetilde{X}$.
\end{thm}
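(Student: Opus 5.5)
The plan is to combine the dimension formula of Corollary \ref{min} with the criterion of Theorem \ref{polytope}: it suffices to show that for every essential divisor $E_w$ there is at least one $w$-critical arrow. Then $V_w\neq 0$, and since $w$ lies in the interior of $\sigma$ or in the relative interior of a proper face $\mu$, we get $\dim(\mu^\perp+V_w)\ge 1$, so the center of $E_w$ on $\widetilde{X}$ is positive-dimensional. If $w$ lies in a proper face $\mu$ of $\sigma$, then $\mu^\perp\neq 0$ already, so the claim is automatic. I will therefore assume $w$ lies in the interior of $\sigma$ and produce a $w$-critical arrow.

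First I reduce to the non-terminal case. Write $\sigma=\mathrm{Cone}(e_1,e_2,e_3)$ with $N=\Z^3+\Z\cdot\frac1r(a_1,a_2,a_3)$. Canonicity means every nonzero lattice point of $\sigma$ satisfies $\langle m_0,\cdot\rangle\ge 1$, where $m_0=(1,1,1)$. If $X$ is terminal, Theorem \ref{3-dim-terminal} already gives the stronger conclusion. Otherwise, the essential divisors are the Hilbert basis elements $w$, and I treat separately those with $\langle m_0,w\rangle=1$ (crepant) and those with $\langle m_0,w\rangle\ge2$.

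\textbf{Crepant case.} The Hilbert basis elements of height one lie on the triangle $\{\langle m_0,\cdot\rangle=1\}\cap\sigma$. I would apply Theorem \ref{polytope} to the polytope cut out by $w^\perp$ in $\sigma^\vee$, and exhibit two lattice points $\head,\tail\in\sigma^\vee$ whose difference lies in $w^\perp\cap M$ and satisfies the extra critical conditions. A natural candidate is $\tail=0$ or $\tail=m_0$, with $\head$ a lattice point on the facet of $\sigma^\vee$ orthogonal to $w$'s neighbours. For crepant $w$, Ito--Reid/Nakamura type arguments suggest that the $G$-Hilbert scheme realises all crepant divisors. One could even cite that $G$-Hilb is a crepant resolution when $G\subset \SL_3$. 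In general $G\not\subset\SL_3$, however, so I instead verify the polytope condition directly using the fact that $w$ has height one.

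\textbf{Non-crepant case.} Canonical but non-terminal three-dimensional cyclic quotients are classified: either a one-parameter family built from a terminal quotient and a Gorenstein part, or $\frac1r(1,-1,a)$-like shapes (by White--Frumkin / Morrison--Stevens style results). I would use this structure to locate, for each interior Hilbert basis element $w$, a height-one lattice point $u$ in $\sigma$ such that $w$ and $u$ span a two-dimensional cone whose dual provides the required arrow. The arrow's vector is then taken in $w^\perp\cap u^\perp$ direction, adjusted to lie in $M$.

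The main obstacle is checking the extra conditions in the definition of $w$-critical arrow, namely that $\head$ and $\tail$ realise the appropriate minima on $\sigma^\vee$ relative to $w$. I expect to handle this via the sufficient condition of Theorem \ref{polytope}, reducing everything to showing that a certain lattice polytope in the plane $w^\perp$ contains two lattice points. In the non-cyclic abelian case, I would first reduce to cyclic subgroups by noting that $N$ is an overlattice and arguing cone by cone, though this reduction may need care.
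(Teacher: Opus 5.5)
Your outer frame is sound: by Corollary~\ref{min}, boundary elements $w$ are automatic, and the terminal case is Theorem~\ref{3-dim-terminal}. The gap is the core step, producing a $w$-critical arrow for every interior Hilbert basis element in the remaining canonical cases. The one concrete tool you name cannot do this uniformly.

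When $\mu=\sigma$, the hypothesis of Theorem~\ref{polytope} (for some $i$) is exactly $\Lambda^w_{(=1)}\cap M\neq\emptyset$. Take $G=\frac{1}{14}(1,9,11)$ and $w=\frac{1}{14}(7,7,7)=\frac12(1,1,1)$. This $w$ has age $3/2$, hence discrepancy $1/2$, so it is essential. Here $M=\{x\in\Z^3\mid x_1+9x_2+11x_3\equiv 0 \bmod 14\}$. None of the six vectors $x\in\Z^3_{\ge 0}$ with $x_1+x_2+x_3=2$ lies in $M$, so $\Lambda^w_{(=1)}\cap M=\emptyset$. The volume criterion also fails (the normalized volume is $2/7$). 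So neither Theorem~\ref{polytope} nor a "two lattice points in a polytope in $w^\perp$" argument covers this case. The paper handles it separately by running the algorithm of Section~\ref{pseudo_code}. By hand, Definition~\ref{Crit-delta} gives $c_{1,w}=c_{2,w}=4$, $\Crit_1^w=\{(-3,1,2)\}$ and $\Crit_2^w=\{(2,-3,1)\}$. These are linearly independent, so $\R_{\ge 0}w$ is even a ray of $\Delta$.

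The rest of the plan does not hold together either.
\begin{itemize}
\item Your split into $\langle m_0,w\rangle=1$ and $\langle m_0,w\rangle\ge 2$ is not exhaustive. In the non-Gorenstein canonical cases, interior Hilbert basis elements typically have non-integral age in $(1,2)$. Examples are all interior $w_l$ of type $\frac1r(1,r-1,a)$, the age-$3/2$ elements of $\frac{1}{4k}(1,2k+1,4k-2)$, and the element above.
\item For height-one $w$ with $G\not\subset\SL_3$, "height one" gives nothing, since $m_0\notin M$. What you need is an explicit $u\in M\cap\sigma^\vee$ with $(u,w)=1$.
\item Your candidates $\tail=0$ and $\tail=m_0$ are impossible. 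A critical arrow sits at a level $c>0$, and its tail is the unique minimizer of a tie-breaker on $\Lambda_{(=c)}$. It is therefore a vertex, lying on a ray $\R_{\ge0}e_i$.
\item The classification that must be worked through is Ishida--Iwashita (Proposition~\ref{class-3can}), type by type, with explicit $u$. The paper uses $u=(1,1,0)$ for type (3), $(2,0,1)$ or $(0,2,1)$ for type (4), and a finite check for $\frac19(1,4,7)$ (Theorem~\ref{3-can}).
\item Reducing a non-cyclic $G$ to cyclic subgroups does not make sense as stated, because critical arrows depend on the lattice $M$, which is determined by all of $G$. The correct input is Remark~\ref{expert}: a non-Gorenstein canonical abelian threefold quotient is cyclic. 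For $G\subset\SL_3$ one has $m_0\in M$ and every Hilbert basis element has height one, so $m_0\in\Lambda^w_{(=1)}\cap M$ and Corollary~\ref{exist-crit} applies.
\end{itemize}
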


As far as we examined a large number of examples in the situation of the last theorem, every essential divisor actually appears as a prime divisor  on $\widetilde{X}$ (see Theorem \ref{many-example}). However, the authors do not know how to prove it in general. 
On the other hand, we also obtain negative answers to Problem \ref{FB-ess}. By numerical experiments using the mentioned algorithm, we show:
\begin{thm}[Example \ref{example}]\label{example-intro}
There exist $\Q$-factorial toric singularities $X$ of dimension four such that some essential divisor over $X$ has center of codimension two on  $\widetilde{X}$.
\end{thm}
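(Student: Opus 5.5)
The plan is to realize the two singularities explicitly as simplicial cones in dimension four and compute everything with the algorithm for $\dim(\mu^\perp+V_w)$ behind the first main theorem (Corollary \ref{min}). By that result, the center of $E_w$ on $\widetilde{X}$ has dimension $\dim(\mu^\perp+V_w)$. For $w$ in the interior of $\sigma$ we have $\mu=\sigma$, so $\mu^\perp=0$ and the dimension is simply $\dim V_w$. So it suffices to exhibit a $\Q$-factorial four-dimensional toric $X$ and a Hilbert basis element $w$ in the interior of $\sigma$ whose $w$-critical arrows span exactly a $2$-dimensional subspace of $M_\R$. The center then has dimension $2$, i.e.\ codimension two in the four-dimensional $\widetilde{X}$.

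\textbf{Step 1: choose candidates.} I would use cyclic quotient singularities $\A^4/\mu_r$ with weights $\frac1r(a_1,a_2,a_3,a_4)$. Then $N=\Z^4+\Z\frac1r(a_1,\dots,a_4)$ and $\sigma$ is the positive orthant. I would search over small $r$, say $r\le 30$, computing for each candidate the Hilbert basis of $\sigma\cap N$. Its nontrivial elements are the points $\frac1r(\overline{ka_1},\dots,\overline{ka_4})$ not decomposable as sums of two others, where $\overline{x}$ denotes the remainder of $x$ modulo $r$. Each such $w$ gives an essential divisor, since here essential divisors correspond to the Hilbert basis elements other than the ray generators.

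\textbf{Step 2: compute $V_w$.} For each Hilbert basis element $w$, I would enumerate the $w$-critical arrows using the definition from \cite{CMDY25}. This is a finite search, because heads and tails range over a bounded region of $\sigma^\vee$ determined by the Frobenius-type data (e.g.\ the $G$-graph/fundamental domain description for $G$-Hilbert schemes). From the arrows I would compute the rank of their vectors $\vect_\alpha$. The expected output is an $X$ and a $w$ with $\operatorname{rank}=2<3=d-1$.

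\textbf{Step 3: certify.} Since this is a numerical experiment, I would cross-check at least one instance directly. One option is to compute the fan $\Delta$ of the normalized $G$-Hilbert scheme, or of $\FB_{(\infty)}$, via the Gröbner/state-polytope description. The check is that the minimal cone $\tau\ni w$ has $\dim\tau=2$, consistent with $\dim\tau+\dim(\mu^\perp+V_w)=4$.

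The main obstacle is Step 2. Enumerating critical arrows requires implementing the extra conditions in the definition correctly and bounding the search region so that no arrow is missed; a missed arrow would understate $\dim V_w$ and give a false example. This is why I would include the independent check in Step 3. I would try first to use the sufficient criterion (Theorem \ref{polytope}) in reverse: rule out arrows in the remaining directions by checking that the relevant polytope has no lattice points.
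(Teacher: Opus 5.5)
Your reduction is the same as the paper's. By Corollary \ref{min}, for a Hilbert basis element $w\in\sigma^\circ$ one has $\mu^\perp=0$, so you need $\dim V_w=2$. The paper proves the statement only by exhibiting explicit cones and such $w$ (Example \ref{example}), computed with the algorithm of Section \ref{pseudo_code}. As a proof, however, your proposal has a gap at exactly the step you call the main obstacle, and it never produces an example.

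\textbf{The missing completeness argument.} Codimension exactly two requires the upper bound $\dim V_w\le 2$. That means certifying that \emph{no} $w$-critical arrow has its vector outside a given plane, which needs a provably complete, finite description of the set $\{\vect_\alpha\}$. Step 2 does not supply one. Heads and tails of critical arrows are real points at a real level $c$, so there is nothing finite to enumerate directly, and the appeal to a bounded region ``determined by Frobenius-type data / $G$-graphs'' is not justified.

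The paper's ingredient here is Lemma \ref{delta-Lambda} together with Proposition \ref{Crit-set} and Corollary \ref{Critw-set}. Translating the tail $cu_i$ to the origin turns $A_\R$ into $(\delta_i^w)^\vee$, and condition (2) of Definition \ref{crit-arrow} becomes $f_i(c)<c_{i,w}$. Hence the set of vectors of $w$-critical arrows is exactly $\Crit^w=\bigcup_i\Crit_i^w$, where $\Crit_i^w=\{u\in(\delta_i^w)^\vee\cap M\setminus\{0\}\mid (u,-v_i)<c_{i,w}\}$. For $w\in\sigma^\circ$ the vector $-v_i$ lies in the interior of $\delta_i^w$, so $\Crit_i^w$ is finite. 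The constant $c_{i,w}$ is read off from the Hilbert basis of $(\delta_i^w)^\vee\cap M$ (Proposition \ref{Crit-Hilb}).

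\textbf{The proposed shortcut is invalid.} Using Theorem \ref{polytope} ``in reverse'' cannot replace this, because that theorem is only a sufficient condition and its converse is false. Take $\frac{1}{14}(1,9,11)$ with $w=\frac{1}{14}(7,7,7)$. There $\Lambda^w_{(=1)}\cap M=\emptyset$ (Theorem \ref{3-can}). Any $u$ satisfying the hypothesis of Theorem \ref{polytope} would lie in that set, so the hypothesis fails for every $i$. Yet there are two linearly independent $w$-critical arrows (proof of Corollary \ref{3-can-div}). An empty polytope therefore rules out nothing.

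\textbf{No example is exhibited.} An existence statement is proved by the example itself. Your search range does contain one. Take the first cone of Example \ref{example}, $\langle(1,0,0,0),(0,1,0,0),(1,3,3,0),(4,3,1,4)\rangle$ with $w=(4,4,2,3)$. In the basis $v_1,\dots,v_4$ one has $w=\frac{1}{12}(7,6,5,9)$, which generates $N/\bigoplus_i\Z v_i\cong\Z/12$. So this is the cyclic quotient of type $\frac{1}{12}(7,6,5,9)$ with $w$ in the interior, and your plan would succeed once Step 2 is replaced by the $\Crit^w$ computation.

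Your Step 3, if actually carried out on such an example, would be an independent proof that bypasses critical arrows altogether, since it reads $\dim\tau=2$ directly off $\Delta$. The cost is computing the whole fan.
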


The final main result is that a simple inequality implies the existence of at least one $w$-critical arrow. Recall that the discrepancy $a(E)$ of a divisor $E$ over $X$ is the coefficient of $E$ in the relative canonical divisor $K_{Y/X}$ associated to a proper birational morphism $Y\to X$ where $E$ appears on $Y$ as a prime divisor. In our setting, $a(E)>-1$, that is, $X$ has only log terminal singularities. 
Let $|G|$ be the order of the finite abelian group $G$ in the description of $X$ as a quotient variety $\A^d/G$.

%
%

\begin{thm}
Let $w\in \sigma^\circ \cap N$ with. If the inequality 
\[ 
H_{d,|G|}:=\frac{d}{(|G|(d-1)!)^{1/d}} >1
\] 
holds, then every toric  divisor $E$ over $X$ with $a(E)\leq0$ has center of positive dimension on $\widetilde{X}$. Equivalently, if $w\in \sigma^\circ \cap N$ satisfies $a(E_w)\leq0$, then there exists at least one $w$-critical arrow.
\end{thm}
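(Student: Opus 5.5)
The plan is to reduce the existence of a $w$-critical arrow to a lattice-point problem in a slice of $\sigma^\vee$, using the sufficient condition of Theorem \ref{polytope}, and then to verify that condition with a volume estimate of Blichfeldt/Minkowski type. The inequality $H_{d,|G|}>1$ is exactly what such an estimate should need.

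\emph{Coordinates.} Since $X$ is $\Q$-factorial, $X=\A^d/G$. Choose coordinates so that $\sigma=\R_{\ge0}^d$ and $N\supset \Z^d$ with $[N:\Z^d]=|G|$. Then $M\subset \Z^d$ has index $|G|$ and $\sigma^\vee=\R^d_{\ge 0}$. Write $w=(w_1,\dots,w_d)\in\sigma^\circ\cap N$, so all $w_i>0$. The discrepancy is then $a(E_w)=\sum_i w_i-1$, and the hypothesis $a(E_w)\le 0$ becomes $\sum_i w_i\le 1$. By AM--GM,
\[
\prod_i w_i\le \Bigl(\tfrac1d\textstyle\sum_i w_i\Bigr)^d\le d^{-d}.
\]
Hence the simplex $S_w=\{m\in\sigma^\vee:\langle m,w\rangle\le 1\}$ has volume $1/(d!\prod w_i)\ge d^d/d!$.

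\emph{Reducing to a lattice statement.} An arrow $\alpha$ has $\vect_\alpha=\head_\alpha-\tail_\alpha\in M\cap w^\perp$, so $\head_\alpha$ and $\tail_\alpha$ lie on a common slice $\{\langle m,w\rangle=c\}$. I would use Theorem \ref{polytope} in the following form, which I expect is close to its actual content: a $w$-critical arrow exists as soon as a suitable region $R_w\subset\sigma^\vee$ contains two distinct points that are congruent modulo $M\cap w^\perp$. Here $R_w$ should be essentially $S_w$ or the union of its slices for $c\in[0,1)$, which is where the extra conditions defining criticality live.

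\emph{Blichfeldt step.} Apply Blichfeldt's theorem to the lattice $M\cap w^\perp$. Equivalently, project $R_w$ along a lattice direction complementary to $w^\perp$ and apply Blichfeldt in $w^\perp\cong\R^{d-1}$ to the image, compared against the covolume of $M\cap w^\perp$. Alternatively, apply it slice by slice, which is the same thing after integrating over $c$. The relative covolumes should account for the extra factor $d$: the hypothesis $d^d/(d-1)!>|G|$ reads $d^d/d!>|G|/d$. So the comparison I aim for is
\[
\Vol(S_w)\ \ge\ \frac{d^d}{d!}\ >\ \frac{|G|}{d}.
\]
I would try to obtain it by noting that the cone over the slice at height $1$ has volume $\frac1d\cdot(\text{height})\cdot(\text{slice volume})$, and that the covolume of $M\cap w^\perp$ times the lattice spacing of the values $\langle M,w\rangle$ equals $|G|$. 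Once two congruent points are found, their difference is a nonzero vector in $M\cap w^\perp$, which yields the required arrow.

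\emph{Main obstacle.} The hard part is the precise bookkeeping between the region $R_w$ demanded by the definition of $w$-critical arrows and the region to which Blichfeldt is applied. First, the two points must satisfy the extra criticality conditions and not be merely congruent. Second, boundary and half-open issues must be handled, since Blichfeldt needs strict inequality, which is why the hypothesis is $H_{d,|G|}>1$. Third, the normalization of the $w^\perp$-covolume must produce exactly the factor $d$. If the slice-wise argument does not match the constant, I would instead apply Blichfeldt to the full $d$-dimensional body $S_w$ with the lattice $M$. I would then use the constraint $\langle m,w\rangle\in\frac{1}{n}\Z$ to force congruent points onto a common slice, at the cost of a factor that should again be $d$.
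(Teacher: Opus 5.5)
\emph{Verdict: genuine gap.} Your quantitative part matches the paper's. AM--GM gives $\prod_i w_i\le d^{-d}$. Your cone-over-slice and covolume bookkeeping gives $\Vol_{M\cap w^\perp}(\Lambda^w_{(=1)})=d\Vol(S_w)/|G|=1/\bigl((d-1)!\,|G|\prod_i w_i\bigr)\ge H_{d,|G|}^d>1$. So Blichfeldt yields two points of a single slice differing by a nonzero vector of $M\cap w^\perp$.

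The gap is the step you yourself call the main obstacle: turning that pair into a $w$-critical arrow. The tool you propose cannot do this. Theorem \ref{polytope} needs a lattice point $u\in M$ with $(u,w)=1$ and $(u,v_j)\ge 0$ for all $j$, strictly for $j=i$, and Blichfeldt does not provide such a point. Your fallback also fails. Blichfeldt for $M$ on the $d$-dimensional body $S_w$ gives a difference vector in $M$ that need not lie in $w^\perp$. The two points then need not share a level, so even condition (1) of Definition \ref{crit-arrow} is lost. Projecting $S_w$ to $w^\perp$ loses the levels in the same way.

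The missing ingredient is Lemma \ref{epsilon}.
\begin{itemize}
\item Work on the single slice at level $c=1-\varepsilon$. Its normalized volume is $(1-\varepsilon)^{d-1}\Vol_{M\cap w^\perp}(\Lambda^w_{(=1)})$, which is still $>1$ for small $\varepsilon$. This also settles your strictness and boundary concerns.
\item Below level $1$, condition (2) is automatic. Suppose $P\in A_\R$ is $\alpha$-integral with $w(P)<c$. Then $w(P)-c\in\Z$ because $w\in N$. But $w(P)\ge 0$ puts $w(P)-c$ in $[-c,0)\subset(-1,0)$, a contradiction.
\item Condition (3) forces the tail to be the unique minimizer of a linear function on the slice, hence a vertex of that simplex. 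Lemma \ref{epsilon} shows that a vertex tail suffices.
\end{itemize}
So you must also arrange that Blichfeldt's pair can be replaced by one whose tail is a vertex. The paper simply asserts this can be assumed.

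The vertex reduction is justified for $d=3$. There the slice $S$ is a triangle, and $S-S$ is exactly the union of the six triangles $\pm(S-v)$. Hence $\head-\tail\in\pm(S-v)$ for some vertex $v$, giving the arrow $(v\pm(\head-\tail),v)$. For $d\ge 4$ this decomposition fails. For a tetrahedron, $\Vol(S-S)=20\Vol(S)$, whereas the eight tetrahedra $\pm(S-v)$ have total volume at most $8\Vol(S)$. So a complete write-up along these lines needs a separate argument for this step in dimension $\ge 4$.
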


For example, the above inequality holds if $|G| \le 13$ in dimension $3$, $|G|\le 42$ in dimension  4, and $|G| \le 130$ in dimension 5.

We provide a code for SageMath \cite{sagemath} realizing the algorithm mentioned above, which is described Section \ref{pseudo_code}. Examples  \ref{basic-exam} and \ref{example} are based on computations using this code. We also use a computer to prove Theorem \ref{3-can}, Corollary \ref{3-can-div} and Theorem \ref{many-example}. We put files including the above code and these computations at the arXiv webpage of this paper.

The paper is organized as follows. In Section \ref{prelim}, we set up notation and recall basic notions and results. 
In Section \ref{dim-crit}, we give a formula for the dimension of the minimal cone containing the given element $w\in \sigma \cap N\setminus\{0\}$. In Section \ref{compute-crit-arrow}, we describe an algorithm to determine this dimension. In Section \ref{dim-3-4}, we give results on three-dimensional terminal singularities and canonical singularities as well as give negative examples in dimension four. In last Section \ref{abelian}, we show that a simple inequality shows the existence of at least one $w$-critical arrow.


\section*{Acknowledgments}
We thank Daniel Duarte, Kohei Sato, Yusuke Sato and Yudai Yamamoto for helpful discussion. The first named author was supported by the SECIHTI project CF-2023-G-33. The second named author was supported by JST SPRING, Grant Number JPMJSP2138. The third named author was supported by JSPS KAKENHI Grant Numbers JP21H04994, JP23K25767 and JP24K00519.

\section{Preliminaries}\label{prelim}

\subsection{Toric varieties}\label{toric}

Let $M$ and $N$ be free abelian groups of rank $d$ which are dual to each other and let $M_\R\coloneqq M\otimes\R$ and $N_\R\coloneqq N\otimes\R$. The natural pairing of $a\in M_\R$ and $w\in N_\R$ is denoted by $(a,w)$. We sometimes denote it by $w(a)$, regarding $w$ as a linear function on $M_\R$.

Fixing a basis of $M$, we sometimes identify $M$ with $\Z^d$. The dual basis then induces an identification $N=\Z^d$. Through these identifications, the vector spaces $M_\R$ and $N_\R$ are given with the standard inner products. In turn, the inner products enable us to talk about norms of vectors in these vector spaces as well as orthogonality among vectors in the same vector space.

For a subset $W\subset N_\R$, we define its outer orthogonal subspace $W^\perp\subset M_\R$ by
\begin{equation*}
W^\perp\coloneqq\{a\in M_\R\mid\forall w\in W,(a,w)=0\}.
\end{equation*}
Similarly, we define the outer orthogonal subspaces of a subset of $M_\R$.

Let $\sigma\subset N_\R$ be a strongly convex, nondegenerate (that is $d$-dimensional), rational polyhedral cone and let $\sigma^\vee\subset M_\R$ be its dual cone. Here, \emph{strongly convex} means that the cone contains the origin as a face. Following \cite{Yas12}, we denote $\sigma^\vee$ by $A_\R$ and define the monoid $A\coloneqq A_\R\cap M$. The affine toric variety $X=X_\sigma$ associated to $\sigma$ is given by $X=X_\sigma=\Spec k[A]$.

By a \emph{toric divisor over $X$}, we mean a toric prime divisor $E$ on $Z$ for some toric birational morphism $Z\to X$ of normal toric varieties. We identify two toric divisors over $X$, $E\subset Z$ and $E'\subset Z'$, if $E$ and $E'$ map to each other by the natural birational map between $Z$ and $Z'$. We say that a toric divisor $E$ over $X$ is \emph{exceptional} if the image of $E$ in $X$ has codimension $>1$.

A \emph{ray} in $N_\R$ means a one-dimensional cone of the form $\R_{\geq0}w$ for a nonzero element $w\in N$. A \emph{ray} of $\sigma $ means a one-dimensional face of $\sigma$, while a ray \emph{in} $\sigma$ is a ray contained in $\sigma$. A \emph{primitive element} of $N$ means a nonzero element which is not divisible by any integer $n\geq2$. Each ray contains a unique primitive element. As is well-known, there are one-to-one correspondences
\begin{align*}
\{\text{rays  in $\sigma$}\}&\leftrightarrow\{\text{primitive elements in $\sigma\cap N$}\} \\
&\leftrightarrow\{\text{toric divisors over $X$}\}
\end{align*}
By these correspondences, rays of $\sigma$ correspond to toric prime divisors on $X$.

We also consider the monoid $B\coloneqq\sigma\cap N$. We define a partial order on $B$ as follows; for $w,w'\in B$, $w\leq w'$ if and only if $w'\in w+\sigma$. As is well-known, the monoid $B$ has a unique minimal set of generators, which is called the \emph{Hilbert basis} of $B$, and we denote it by $\Hilb_N(\sigma)$. We write $w<w'$ if $w\leq w'$ and $w\neq w'$.

\subsection{F-blowups}

For each positive integer $l$, we put
\begin{align*}
(1/l)\cdot M&\coloneqq\{m/l\in M_\R\mid m\in M\}, \\
(1/l)\cdot A&\coloneqq A_\R\cap(1/l)\cdot M.
\end{align*}
We define $X_{(l)}\coloneqq\Spec k[(1/l)\cdot A]$, which is an affine toric variety isomorphic to $X$. The inclusion $A\hookrightarrow(1/l)\cdot A$ of monoids define a toric morphism
\begin{equation*}
F^{(l)}\colon X_{(l)}\to X.
\end{equation*}
If $k$ has characteristic $p>0$ and if $l=p^e$ for a positive integer $e$, then $F^{(l)}$ is nothing but the $e$-iterate Frobenius morphism of $X$.
\begin{dfn}[\cite{Yas12}]
We define the \emph{$(l)$-th F-blowup} of $X$, denoted by $\FB_{(l)}(X)$, to be the universal birational flattening of $F^{(l)}$.
\end{dfn}

By definition, there exists a natural proper birational morphism $\FB_{(l)}(X)\to X$. Thus, the normalization $\widetilde{\FB}_{(l)}(X)$ of $\FB_{(l)}(X)$ is a normal toric variety and obtained by a subdivision of the cone $\sigma$. From \cite[Theorem 3.12]{Yas12}, the sequence of F-blowups,
\begin{equation*}
X=\FB_{(1)}(X),\FB_{(2)}(X),\FB_{(3)}(X),\ldots,
\end{equation*}
stabilizes.
\begin{dfn}
We define $\FB_{(\infty)}(X)$ to be $\FB_{(l)}(X)$ for sufficiently large $l$ and call it the \emph{limit F-blowup of $X$}. We define the \emph{normalized limit F-blowup} $\widetilde{X}=\widetilde{\FB}_{(\infty)}(X)$ to be its normalization. We denote by $\Delta$ the fan corresponding to the toric proper birational morphism $\widetilde{X}\to X$, which is a subdivision of $\sigma$.
\end{dfn}

\begin{rem}
From \cite[Proposition 4.2.7]{CLS11}, a cone $\sigma$ is simplicial if and only if the associated toric variety $X$ is $\Q$-factorial. This is also equivalent to $X$ is an abelian finite quotient singularity \cite[Theorem 11.4.8]{CLS11}. Thus, in this case, $\widetilde{X}$ is isomorphic to the $G$-Hilbert scheme \cite{Yas12,TY09}.
\end{rem}

\subsection{Critical arrows}\label{critical-arrows}

Let $a_1,\ldots,a_n$ be the Hilbert basis of the monoid $A$ and let
\begin{equation}\label{Theta}
\Theta\coloneqq\bigcup_i(A_\R+a_i)\subset A_\R.
\end{equation}
Then,  $A_\R\setminus\Theta$ is a bounded set. We define  $D\in\R_{>0}$ to be the diameter of $A_\R\setminus\Theta$ (with respect to the chosen identification $M_\R=\R^d$).  Namely,
\begin{equation}\label{diameter}
D\coloneqq\sup\{|x-y|\mid x,y\in A_\R\setminus\Theta\}.
\end{equation}
We define a finite set $M_{\leq D}\coloneqq\{m\in M\mid|m|\leq D\}$.
We define $\Delta^\ast$ to be the fan obtained by subdividing $\sigma$ by hyperplanes $m^\perp\subset\R^d$ associated to the elements  $m\in M$ with $|m|\le D$. This fan is a refinement of $\Delta$ \cite[Lemma 2.10]{CMDY25}.
We say that a vector $w\in N_\R$ is \emph{general} if for every $a\in M_{\leq D}$, $w(a)\neq0$.

\begin{dfn}[{\cite[Definition 5.1]{CMDY25}}]
By an \emph{arrow}, we mean an ordered pair $\alpha=(\head_\alpha,\tail_\alpha)$ of distinct points $\head_\alpha,\tail_\alpha\in M_\R$, which we call the \emph{head} and \emph{tail}, respectively. The \emph{vector associated to} an arrow $\alpha$ is defined to be $\vect_\alpha\coloneqq\head_\alpha-\tail_\alpha$. We say that an arrow $\alpha$ is \emph{integral} if $\vect_\alpha\in M$ and \emph{primitive} if $\vect_\alpha$ is a primitive element of $M$.
\end{dfn}

Let $P,Q\in M_\R$ and let $\alpha$ be an integral arrow. We say that $P$ is \emph{$Q$-integral} if $P-Q\in M$. We say that $P$ is \emph{$\alpha$-integral} if $P-\head_\alpha\in M$ or equivalently if $P-\tail_\alpha\in M$.

The fixed identification $M=\Z^d$ induces an identification $N=\Z^d$ and hence the standard Euclidean metric on $N_\R$. 
Let $w\in\sigma\setminus\{0\}$ and let $w^{(\perp)}\subset N_\R$ be its inner orthogonal space. A \emph{tie-breaker} of $w$ is an element $b\in w^{(\perp)}$ such that $w+b$ is in a chamber of $\Delta^\ast$ whose closure contains $w$. Here, a \emph{chamber} of a fan to be the interior of a nondegenerate cone belonging to the fan.

Before defining $w$-critical arrows, we introduce the following notation. For basic properties, see \cite[Section 3]{CMDY25}. For $c\in\R_{>0}$, we define
\begin{align*}
\Lambda_{(=c)}^w&\coloneqq A_\R\cap\{w=c\}, \\
\Lambda_{(\leq c)}^w&\coloneqq A_\R\cap\{w\leq c\}, \\
\Lambda_{(<c)}^w&\coloneqq A_\R\cap\{w<c\}.
\end{align*}
Here, $\{w=c\}$ (resp. $\{w\leq c\}$, $\{w<c\}$) means $\{a\in M_\R\mid (a,w)=c\}$ (resp. $\{a\in M_\R\mid (a,w)\leq c\}$, $\{a\in M_\R\mid (a,w)<c\}$). We call $\Lambda_{(=c)}^w$ and $\Lambda_{(\leq c)}^w$ the \emph{level $c$ subpolyhedron} and the \emph{level $\leq c$ subpolyhedron} of $A_\R$, respectively. When $w$ is clear from the context, we simply write $\Lambda_{(=c)},\Lambda_{(\leq c)}$ and $\Lambda_{(<c)}$.

\begin{dfn}[{\cite[Definition 5.4]{CMDY25}}]\label{crit-arrow}
Let $w\in\sigma\setminus\{0\}$ and let $b$ be a tie-breaker of $w$. We say that an integral arrow $\alpha$ in $A_\R$ is \emph{$(w,b)$-critical} if for some $c\in\R_{>0}$,
\begin{enumerate}
\item $\head_\alpha,\tail_\alpha\in\Lambda_{(=c)}$,
\item there is no $\alpha$-integral point in $\Lambda_{(<c)}$,
\item $\tail_\alpha$ is the unique point where the restriction $b|_{\Lambda_{(=c)}}$ of the linear function $b\colon M_\R\to\R$ takes the minimum value.
\end{enumerate}
We say that an arrow is \emph{$w$-critical} if it is $(w,b)$-critical for some tie-breaker $b$ of $w$.
\end{dfn}
Note that from the first condition above, a $w$-critical arrow has the associated vector $\vect_\alpha$ orthogonal to $w$.

\begin{lem}[{\cite[Lemma 2.11]{CMDY25}}]\label{Xiw}
Let $w\in\sigma$ be a general element. Let $>$ be a total order on $\Z^d$ refining the partial order $\geq_w$. Let $M_{\leq D}^{w+}\coloneqq\{m\in M_{\leq D}\mid w(m)>0\}$. The following subsets of $A_\R$ are identical:
\begin{enumerate}
\item $\Theta\cup\{a\in A_\R\mid\exists b\in A_\R,a-b\in M_{\leq D}^{w+}\}$.
\item $\{a\in A_\R\mid\exists b\in A_\R,a-b\in M\text{ and } w(a)>w(b)\}$.
\item $\{a\in A_\R\mid\exists b\in A_\R,a-b\in M\text{ and } a>b\}$.
\end{enumerate}
\end{lem}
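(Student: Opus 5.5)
The plan is to prove the chain of inclusions $(1)\subset(2)\subset(3)\subset(1)$. Throughout, I read ``$a>b$'' for $a,b\in A_\R$ with $a-b\in M$ as meaning $a-b>0$ in the total order on $\Z^d=M$. I take this order to be translation-invariant, as for monomial orders, and take ``refining $\geq_w$'' to mean that $w(m)>0$ implies $m>0$ for $m\in M$.

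\textbf{A preliminary fact about the Hilbert basis.} First I will check that every Hilbert basis element $a_i$ of $A$ satisfies $|a_i|\le D$ and $w(a_i)>0$.
\begin{enumerate}
\item Since $a_i$ is indecomposable in $A$, the points $t\,a_i$ for $0\le t<1$ do not lie in $\Theta$. Indeed, $t a_i - a_j\in A_\R$ would force $t a_i\geq a_j$ along the segment, contradicting minimality; I would check this carefully.
\item Both $0$ and points arbitrarily close to $a_i$ therefore lie in $A_\R\setminus\Theta$, which gives $|a_i|\le D$, so $a_i\in M_{\le D}$.
\item Genericity of $w$ gives $w(a_i)\neq 0$, and $w\in\sigma$, $a_i\in\sigma^\vee$ give $w(a_i)\ge 0$. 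Hence $w(a_i)>0$.
\end{enumerate}

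\textbf{The inclusion $(1)\subset(2)$.} If $a\in\Theta$, write $a=a'+a_i$ with $a'\in A_\R$. Take $b=a'$; then $a-b=a_i\in M$ and $w(a)-w(b)=w(a_i)>0$ by the preliminary fact. The second part of (1) is contained in (2) by definition, since $M_{\le D}^{w+}\subset M$ consists of vectors with $w>0$.

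\textbf{The inclusion $(2)\subset(3)$.} This is immediate: $w(a-b)>0$ implies $a-b>0$, because the total order refines $\geq_w$.

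\textbf{The inclusion $(3)\subset(1)$, the main step.} Let $a\in A_\R$ and $b\in A_\R$ with $a-b\in M$ and $a>b$. If $a\in\Theta$ we are done, so assume $a\in A_\R\setminus\Theta$. The difficulty is that $b$ may be far from $a$, so $a-b$ need not lie in $M_{\le D}$.
\begin{enumerate}
\item \emph{Push $b$ out of $\Theta$.} While $b\in\Theta$, replace $b$ by $b-a_i\in A_\R$. This terminates, because pairing with a fixed interior element $u\in\sigma^\circ$ strictly decreases $u(b)$ by at least $\min_i u(a_i)>0$, while $u(b)\ge 0$.
\item \emph{The order is preserved.} We obtain $b'=b-s\in A_\R\setminus\Theta$ with $s\in A$ a sum of Hilbert basis elements. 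Since $w(s)>0$ unless $s=0$, we have $s\ge 0$ in the total order. Hence $a>b\ge b'$, and $a-b'\in M$.
\item \emph{Conclude.} Both $a$ and $b'$ lie in $A_\R\setminus\Theta$, so $|a-b'|\le D$, i.e.\ $a-b'\in M_{\le D}$. Genericity gives $w(a-b')\neq 0$. If $w(a-b')<0$, the refining property would give $b'>a$, a contradiction. So $a-b'\in M_{\le D}^{w+}$ and $a$ lies in (1).
\end{enumerate}

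The main obstacle I expect is the preliminary fact $|a_i|\le D$, i.e.\ that Hilbert basis elements lie in the closure of $A_\R\setminus\Theta$. This is needed both in step 2 above and in $(1)\subset(2)$. I would prove it directly from the indecomposability of $a_i$ in $A$. Should that fail, I would instead argue that $s\geq 0$ in step 2 using $w(s)\ge 0$ together with a tie-breaking convention for the order.
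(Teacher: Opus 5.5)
Your proof is correct. The paper does not prove this lemma; it quotes it from [CMDY25, Lemma 2.11], so there is no internal argument to compare with. Your cycle of inclusions $(1)\subset(2)\subset(3)\subset(1)$ is a complete, self-contained proof of it.

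The step you flagged goes through as follows. Suppose $ta_i-a_j\in A_\R$ for some $0\le t<1$. Then
\[
a_i-a_j=(ta_i-a_j)+(1-t)a_i\in A_\R\cap M=A,
\]
so indecomposability of $a_i$ forces $a_j=a_i$, since $a_j\neq 0$. But then $-(1-t)a_i\in A_\R$, which contradicts the strong convexity of $A_\R$. Hence $ta_i\in A_\R\setminus\Theta$ for all $t\in[0,1)$, which gives $|a_i|\le D$ and then $w(a_i)>0$ by genericity. This is exactly the fact the paper uses tacitly in the proof of Proposition \ref{Crit-vect}, where it writes $|\vect_\alpha|=|a|\le D$. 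So the fallback you mention at the end is unnecessary.

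Translation invariance of $>$ is also not essential. If the order is read as an order on the points themselves, then in step 2 with $s\neq0$ you get $w(b')<w(b)$, hence $b>b'$ directly from the refinement property. Transitivity then gives $a>b'$, and the rest of your argument is unchanged.
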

\begin{dfn}
For a general $w\in\sigma$, we denote by $\Xi_w$ the subset of $A_\R$ defined in the above Lemma.
\end{dfn}

\begin{prop}[{\cite[Corollary 2.15]{CMDY25}}]\label{same-chamber}
Let $w_1,w_2\in\sigma$ be general elements. Then, $w_1$ and $w_2$ are in the same chamber of $\Delta$ if and only if $\Xi_{w_1}=\Xi_{w_2}$.
\end{prop}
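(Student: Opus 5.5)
This statement is quoted from \cite[Corollary 2.15]{CMDY25}, so the natural route is to rebuild it from the description of $\Delta$ via Gröbner-type data. The plan is to identify chambers of $\Delta$ with the "initial data" of the F-blowup. The torus-fixed points of $\FB_{(l)}(X)$ for large $l$ correspond to torus-invariant flat families, that is, to the $(1/l)\cdot A$-graded submodules $k[A]$-generated in the way dictated by $F^{(l)}$. For a general $w$, the limit point of the one-parameter subgroup $\lambda_w(t)$ applied to the generic fiber is determined by which monomials of $k[(1/l)\cdot A]$ become redundant modulo $k[A]$-translates of smaller $w$-weight. The approach is to show this set of redundant monomials, normalized by $1/l$ and taken in the limit $l\to\infty$, is exactly $\Xi_w$.

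\textbf{Step 1.} Use Lemma \ref{Xiw}, description (3), with a total order $>$ refining $\ge_w$. Then the complement $A_\R\setminus\Xi_w$ consists of the $>$-minimal elements of each coset of $M$ in $A_\R$. In other words, $A_\R\setminus \Xi_w$ is the set of "standard monomials" of the initial ideal, for the $w$-weight, of the module $k[(1/l)\cdot A]$ over $k[A]$. The limit point $\lim_{t\to0}\lambda_w(t)\cdot x_{\text{gen}}$ on $\FB_{(l)}(X)$ is the torus-fixed point whose associated module is spanned by these minimal representatives. Generality of $w$, in the sense of description (1) involving only $M_{\le D}$, guarantees that no ties occur. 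It also guarantees that the answer depends only on the signs of $w$ on $M_{\le D}$, so only on the chamber of $\Delta^\ast$.

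\textbf{Step 2.} Two general $w_1,w_2$ lie in the same chamber of $\Delta$ if and only if the limits $\lim\lambda_{w_i}(t)x_{\text{gen}}$ coincide as torus-fixed points of $\widetilde X$. This is standard toric geometry: the interior of a maximal cone is exactly the set of $w$ whose limit is the fixed point of that cone. Passing from $\FB_{(\infty)}(X)$ to its normalization $\widetilde X$ is harmless, since normalization is bijective over torus-fixed points of a toric variety whose fan is $\Delta$. By Step 1 the limit point is determined by $\Xi_w$, so $\Xi_{w_1}=\Xi_{w_2}$ implies that $w_1,w_2$ are in the same chamber.

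\textbf{Step 3 (converse, the main obstacle).} We must show that the fixed point determines $\Xi_w$. In other words, the map from sets $\Xi$ to fixed points is injective. I would extract $\Xi_w$ from the fixed point: its local ring yields the flat family's fiber, a monomial submodule whose generators are precisely $(1/l)\cdot A\setminus \Xi_w$, rescaled. The difficulty is controlling the stabilization in $l$, showing that the finite-$l$ standard sets determine the real set $\Xi_w$. For this I would use the boundedness of $A_\R\setminus\Theta$ and the diameter bound $D$, which make everything detectable at the finite level once $l$ is large. I expect this bookkeeping, as in \cite{CMDY25}, to be the delicate part. Once it is done, the two directions together give the stated equivalence.
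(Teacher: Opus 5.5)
Your Steps 1--2 give a workable geometric version of the finite-level picture: the $\lambda_w$-limit in the Hilbert/Quot model of $\FB_{(l)}(X)$ is the monomial quotient spanned by the $w$-minimal monomials. The gap is that Step 3 is announced but not carried out. You correctly locate the issue as the passage from level $l$ to the real set $\Xi_w$, but you defer it as ``bookkeeping'', and that passage is the whole content of the ``only if'' direction for the \emph{limit} F-blowup.

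In your framework, same chamber gives the same limit in $\widetilde X$, hence the same image point in $\FB_{(l)}(X)$. This yields $\Xi_{w_1}\cap\frac{1}{l}A=\Xi_{w_2}\cap\frac{1}{l}A$. Indeed, for $a\in\frac{1}{l}A$, any $b\in A_\R$ with $a-b\in M$ lies in $\frac{1}{l}A$, so this intersection is exactly the set of non-standard monomials at level $l$. What is missing is why these equalities force $\Xi_{w_1}=\Xi_{w_2}$. The diameter bound enters through description (1) of Lemma~\ref{Xiw}. There, $\Xi_w$ is the finite union of the closed full-dimensional polyhedra $A_\R+a_j$ and $A_\R\cap(A_\R+m)$ for $m\in M_{\leq D}^{w+}$. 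Since $\Xi_w$ depends only on which elements of the finite set $M_{\leq D}$ are $w$-positive, only finitely many such sets occur.

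Now suppose $x\in\Xi_{w_1}\setminus\Xi_{w_2}$. Take one of these polyhedra $P\subset\Xi_{w_1}$ containing $x$, and a ball $O$ about $x$ disjoint from the closed set $\Xi_{w_2}$. Then $O\cap\operatorname{int}P$ is a nonempty open subset of $\Xi_{w_1}\setminus\Xi_{w_2}$, so it meets $\frac{1}{l}M$ for all large $l$. Hence there is $l_1$ such that
$$\Xi_w=\Xi_{w'}\iff\Xi_w\cap\tfrac{1}{l}A=\Xi_{w'}\cap\tfrac{1}{l}A$$
for all general $w,w'$ and all $l\geq l_1$. Choosing $l$ also beyond the stabilization index completes ``only if''.

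The ``if'' direction has a smaller hole in Step 2. Bijectivity of normalization on torus-fixed points is not automatic for a non-normal variety with a dense torus. For example, the nodal cubic, which is $\mathbb{P}^1$ with $0$ and $\infty$ identified, has its two $\mathbb{G}_m$-fixed points glued together.

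What makes bijectivity true here is the chart structure of $\FB_{(l)}(X)$. Consider the open locus where the monomials $x^b$, $b\in\frac{1}{l}A\setminus\Xi_w$, form a basis of the universal quotient. On it, $\FB_{(l)}(X)$ is $\Spec k[S_w]$, where $S_w$ is generated by $A$ and the differences $a-b(a)$, with $b(a)$ the minimal element of the class of $a$. This chart has a single fixed point. Its normalization is the chart of $\widetilde X$ for the maximal cone $(\R_{\geq0}S_w)^\vee$, and $S_w$ depends only on $\Xi_w\cap\frac{1}{l}A$. This gives ``if'' directly.

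This chart description, together with the Hilbert-scheme model and the flat-limit computation in Step 1, should be proved or cited (cf.\ \cite{Yas12}), since your whole argument runs through them. The paper itself gives no argument here and simply imports the statement from \cite[Corollary 2.15]{CMDY25}.
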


\begin{prop}\label{exists-crit-arrow}
Let $w\in B\setminus\{0\}$ and let $w_1,w_2$ be general elements sufficiently close to $w$. If $w_1,w_2$ are in the different chambers of $\Delta$ whose closure contains $w$, then there exists a $w$-critical arrow $\alpha$ in $A_\R$ such that $w_1(\vect_\alpha)\neq w_2(\vect_\alpha)$.
\end{prop}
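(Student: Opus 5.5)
Since $w_1,w_2$ are general and lie in different chambers of $\Delta$, Proposition \ref{same-chamber} gives $\Xi_{w_1}\neq\Xi_{w_2}$. After possibly swapping $w_1$ and $w_2$, we may pick a point $a\in\Xi_{w_1}\setminus\Xi_{w_2}$. I will build a $w$-critical arrow from such a point.

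Using description (1) of Lemma \ref{Xiw}, $a\notin\Theta$. Also there is $b\in A_\R$ with $m:=a-b\in M_{\le D}$ and $w_1(m)>0$, while no such $b$ satisfies $w_2(a-b)>0$; in particular $w_2(m)<0$ by genericity. Since $w_1,w_2$ can be taken arbitrarily close to $w$ and $M_{\le D}$ is finite, the sign change forces $w(m)=0$. This yields a family of integral arrows parallel to $w^\perp$ whose associated vectors distinguish $w_1$ from $w_2$.

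The plan for turning this into a critical arrow is as follows.
\begin{enumerate}
\item Consider the translation class $a+M$ and the level $c=\min\{w(x)\mid x\in (a+M)\cap A_\R\}$. This minimum is attained because $w\in B$ takes discrete values on $a+M$.
\item Choose a tie-breaker $b'$ of $w$ with $w+b'$ lying in the chamber containing $w_2$. Such a $b'$ exists because the closure of that chamber contains $w$, and general points near $w$ in that chamber have the form $w+b'$ up to scaling.
\item Let $\tail$ be the $b'$-minimizer on $\Lambda_{(=c)}$. I need to check that it is unique, so that condition (3) holds; this should follow from $w+b'$ lying in a chamber of $\Delta^\ast$.
\item Take $\head$ to be an $a$-integral point of $\Lambda_{(=c)}$ other than $\tail$. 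The arrow $(\head,\tail)$ has no $\alpha$-integral points below level $c$ by minimality of $c$, so conditions (1) and (2) hold.
\end{enumerate}

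The key claim is that $a$ can be arranged so that its class attains its $w$-minimum at level $c$ at two distinct points. One of these should be the $w_2$-minimal point $\tail$ and the other the $w_1$-minimal point. Then
\[
w_1(\vect_\alpha)\le 0\le w_2(\vect_\alpha),
\]
and genericity makes both inequalities strict, giving $w_1(\vect_\alpha)\neq w_2(\vect_\alpha)$.

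To justify the key claim, I use the characterization (2)/(3) of Lemma \ref{Xiw}. A point lies outside $\Xi_{w_i}$ exactly when it is the unique $w_i$-minimum of its class $(x+M)\cap A_\R$. Because $w_i$ is near $w$, this minimum lies on the $w$-minimal level $\Lambda_{(=c)}$ of that class. Hence $\Xi_{w_1}\neq\Xi_{w_2}$ means that for some class, the $w_1$-minimizer and the $w_2$-minimizer differ. Both are on level $c$, and their difference is the desired $\vect_\alpha$.

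I expect the main obstacle to be the nearness argument. It has to show two things uniformly over all classes. First, that $w_i$-minimizers lie on the $w$-minimal level: classes with large $w$-value meet $\Theta$ and are governed by the finitely many vectors in $M_{\le D}$. Second, that the $w_2$-minimizer coincides with the tie-breaker minimizer, so that condition (3) holds. For both I would first try reducing to the finite set $M_{\le D}$ together with the bounded set $A_\R\setminus\Theta$.
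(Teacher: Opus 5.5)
\textbf{There is a genuine gap, at condition (3), and it cannot be repaired: as written, with Definition~\ref{crit-arrow}, the statement appears to be false.}

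Your opening is the paper's first step, done slightly more carefully. You choose $a\in\Xi_{w_1}\setminus\Xi_{w_2}$ and a witness $b$ with $m=a-b\in M_{\le D}$, which forces $w(m)=0$. The paper simply asserts $w(\head)=w(\tail)$; your reduction to the finite set $M_{\le D}$ is what justifies it. The uniformity you worry about at the end is also manageable by the same device: peel Hilbert basis elements off a lower point until it leaves $\Theta$, which yields a witness in $M_{\le D}$.

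The gap is in steps 3--4 and the ``key claim''. Condition (3) of Definition~\ref{crit-arrow} asks that $\tail$ be the unique minimizer of $b'$ on the \emph{whole} slice $\Lambda_{(=c)}$. So $\tail$ must be a vertex of $\Lambda_{(=c)}$, i.e.\ a point on a ray of $A_\R$. What $a\notin\Xi_{w_2}$ gives you is only a minimizer of $w_2$ (equivalently of $b'$) among the points of $(a+M)\cap\Lambda_{(=c)}$. Nothing puts the vertex selected by $b'$ into $a+M$. Consequently, in step 4 the pair $(\head,\tail)$ need not even be an integral arrow, and ``$\tail$ = the $w_2$-minimal point of the class'' identifies two different points. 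Drop that identification and you are left with an arrow between the $w_1$- and $w_2$-minimizers of one class, satisfying only (1) and (2). The paper's proof also stops exactly there: it takes $\head=a$, $\tail=b$, checks (1) and (2), and never verifies (3).

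A better choice of $a$ will not help, as the following example indicates. Take $A_\R=\R^4_{\ge0}$, $M=\Z(1,1,-1,-1)+K\Z^4$ with $K\ge5$, and $w=(1,1,1,1)\in N$, so that $w(M)=K\Z$.
\begin{itemize}
\item The points $(1,1,0,0)$ and $(0,0,1,1)$ are the only points of their class at its minimal $w$-level $2$.
\item Hence, for general $w_1,w_2$ close to $w+\varepsilon e_1$ and $w-\varepsilon e_1$ respectively, $(1,1,0,0)\in\Xi_{w_1}\setminus\Xi_{w_2}$. By Proposition~\ref{same-chamber}, $w_1$ and $w_2$ lie in different chambers of $\Delta$.
\item On the other hand, $ce_i$ is $w$-minimal in its class only for $c<K$.
\item A congruence check modulo $K$ shows that every $0\ne n\in M\cap w^\perp$ with a single negative coordinate $n_i$ has $n_i\le -K$.
\end{itemize}
So there is no $w$-critical arrow at all; all $\Crit_i^w$ are empty.

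What the argument would need is a way to move a tied pair $p,q$ onto a vertex. Subtracting their coordinatewise minimum in the basis $u_j$ keeps them tied and $w$-minimal, and turns them into $m^{+},m^{-}$ with $m=p-q$. This lands on a ray only if $m$ or $-m$ has a single negative coordinate. That is automatic for simplicial $\sigma$, $w\in\sigma^\circ$ and $d\le 3$, but it fails in the example above.
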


\begin{proof}
From Proposition \ref{same-chamber}, we have $\Xi_{w_1}\neq\Xi_{w_2}$. Let $\head\in\Xi_{w_1}\setminus\Xi_{w_2}$. Then there exists $\tail\in A_\R$ such that $\head-\tail\in M$ and $w_1(\head)>w_1(\tail)$. Since $\head\notin\Xi_{w_2}$ and $w_2$ is general, we have $w_2(\head)<w_2(\tail)$. Thus $\alpha=(\head,\tail)$ is an integral arrow such that $c\coloneqq w(\head)=w(\tail)$. Now we show that there is no $\alpha$-integral point in $\Lambda_{(<c)}$. Suppose that there exists an $\alpha$-integral point $P$ in $\Lambda_{(<c)}$. Then we have $w(\head)>w(P)$. Since $w_1$ and $w_2$ are sufficiently close to $w$, we have
\[
w_1(\head)>w_1(P),w_2(\head)>w_2(P).
\]
However, this contradicts the assumption that $\head\notin\Xi_{w_2}$, and hence $\alpha$ is a $w$-critical arrow.
\end{proof}

\section{A dimension formula}\label{dim-crit}

We keep the notation from Section \ref{prelim}. In particular, $\sigma$ is a cone in $N_\R$, $B$ is the monoid $\sigma\cap N$, and $A_\R\subset M_\R$ is the dual cone of $\sigma$. Let $w\in B\setminus\{0\}$ and let $\mu$ be the minimal face of $\sigma$ containing $w$.
\begin{dfn}\label{wcrit}
We define a subspace $V_w$ of $M_\R$ as follows:
\begin{equation*}
V_w\coloneqq\langle\vect_\alpha\mid\alpha\text{ is a $w$-critical arrow in }A_\R\rangle_\R.
\end{equation*}
\end{dfn}

\begin{lem}\label{dim-formula}
Let $(V_w)^\perp$ be the orthogonal space of $V_w$, let $\overline{M_\R}$ be the quotient space $M_\R/\mu^\perp$ and let $\overline{V_w}$ be the image of $V_w$ in the quotient space $\overline{M}_\R$. Then we have
\[
\dim(\langle\mu\rangle_\R\cap(V_w)^\perp)=\dim\mu-\dim\overline{V_w}.
\]
\end{lem}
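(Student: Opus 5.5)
This is a linear-algebra statement, and I plan to prove it by identifying $\langle\mu\rangle_\R$ with the dual of $\overline{M_\R}$. Write $L\coloneqq\langle\mu\rangle_\R\subset N_\R$. By definition of the outer orthogonal space, $\mu^\perp=L^\perp$. The pairing $M_\R\times N_\R\to\R$ restricts to a pairing $M_\R\times L\to\R$, and its left kernel is exactly $L^\perp=\mu^\perp$. This pairing is therefore nondegenerate on $L$: if $w'\in L$ pairs to zero with all of $M_\R$, then $w'=0$, since the pairing on $M_\R\times N_\R$ is perfect. Hence it descends to a perfect pairing
\[
\overline{M_\R}\times L\to\R,\qquad (\bar a,w')\mapsto (a,w'),
\]
and in particular $\dim\overline{M_\R}=\dim L=\dim\mu$.

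Next I will show that, under this perfect pairing,
\[
L\cap (V_w)^\perp=\{w'\in L\mid (\bar v,w')=0\ \text{for all } \bar v\in\overline{V_w}\}.
\]
In other words, $L\cap(V_w)^\perp$ is the annihilator of $\overline{V_w}$ inside $L$. This is immediate from the definitions. An element $w'\in L$ lies in $(V_w)^\perp$ exactly when $(v,w')=0$ for every $v\in V_w$. Moreover, $(v,w')$ depends only on the class $\bar v$, because $w'\in L$ kills $\mu^\perp$.

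The conclusion then follows from the standard fact that, for a perfect pairing between finite-dimensional spaces, the annihilator of a subspace has dimension equal to the ambient dimension minus the dimension of the subspace. This gives $\dim(L\cap(V_w)^\perp)=\dim L-\dim\overline{V_w}=\dim\mu-\dim\overline{V_w}$.

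There is no real obstacle here. The only point needing care is checking that $\mu^\perp$ is exactly the kernel of $M_\R\to L^*$, so that the induced pairing is perfect. This holds because $\mu$ spans $L$, so annihilating $\mu$ is the same as annihilating $L$.
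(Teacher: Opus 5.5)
Your proof is correct, but it takes a different route from the paper's.

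\emph{The paper's route.} It stays inside $M_\R$ and $N_\R$ and does a pure dimension count. It applies the Grassmann formula to $\langle\mu\rangle_\R$ and $(V_w)^\perp$, and rewrites $\langle\mu\rangle_\R+(V_w)^\perp$ as $(\mu^\perp\cap V_w)^\perp$. It then uses $\dim V_w=d-\dim(V_w)^\perp$ and $\dim\overline{V_w}=\dim V_w-\dim(\mu^\perp\cap V_w)$ to eliminate everything except $\dim\mu$ and $\dim\overline{V_w}$.

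\emph{Your route.} You identify $\langle\mu\rangle_\R$ with the dual of $\overline{M_\R}=M_\R/\mu^\perp$. You then recognize $\langle\mu\rangle_\R\cap(V_w)^\perp$ as the annihilator of $\overline{V_w}$, so the formula is just the annihilator-dimension count for a perfect pairing. You correctly handle the one point needing care: the left kernel of $M_\R\times\langle\mu\rangle_\R\to\R$ is exactly $\mu^\perp$ and the right kernel is zero.

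\emph{Comparison.} Your version is more self-contained. The paper's identity $\langle\mu\rangle_\R+(V_w)^\perp=(\mu^\perp\cap V_w)^\perp$ is itself a duality fact, coming from $(A\cap B)^\perp=A^\perp+B^\perp$ and $(\mu^\perp)^\perp=\langle\mu\rangle_\R$, and the paper uses it without comment. Your version also explains why the quotient $\overline{V_w}$, rather than $V_w$, governs the dimension. It matches how the lemma is used in Theorem~\ref{arrow-dim}, where $\langle\tau\rangle_\R$ turns out to be exactly this annihilator. The paper's chain of equalities is purely mechanical and needs no auxiliary pairing, at the cost of being less transparent.
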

\begin{proof}
We have the following equalities:
\begin{align*}
\dim(\langle\mu\rangle_\R+(V_w)^\perp)&=\dim\mu+\dim(V_w)^\perp-\dim(\langle\mu\rangle_\R\cap(V_w)^\perp), \\
\dim(\langle\mu\rangle_\R+(V_w)^\perp)&=\dim((\mu^\perp\cap V_w)^\perp)=d-\dim(\mu^\perp\cap V_w), \\
\dim V_w&=d-\dim(V_w)^\perp, \\
\dim\overline{V_w}&=\dim V_w-\dim(\mu^\perp\cap V_w).
\end{align*}
Using these equalities in turn, we obtain
\begin{align*}
\dim(\langle\mu\rangle_\R\cap(V_w)^\perp)&=\dim\mu+\dim(V_w)^\perp-\dim(\langle\mu\rangle_\R+(V_w)^\perp) \\
&=\dim\mu+\dim(V_w)^\perp-(d-\dim(\mu^\perp\cap V_w)) \\
&=\dim\mu-\dim V_w+\dim(\mu^\perp\cap V_w) \\
&=\dim\mu-\dim\overline{V_w}.
\end{align*}
\end{proof}

Let $\tau\in\Delta$ be the minimal cone containing $w$. Let us recall the following result from \cite{CMDY25} which results $\tau$ and $w$-critical arrows.

\begin{prop}[{\cite[Theorem 5.7]{CMDY25}}]\label{min-cone-inclusion}
We have $\tau \subset (V_w)^\perp$.
\end{prop}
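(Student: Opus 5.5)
The plan is to fix a single $w$-critical arrow $\alpha$ and show that $u(\vect_\alpha)=0$ for every $u\in\tau$. Since $V_w$ is spanned by such vectors, this gives $\tau\subset(V_w)^\perp$. The argument has two parts: first show that $\vect_\alpha$ is nonnegative on $\tau$, and then use that $w$ lies in the relative interior of $\tau$ to upgrade the inequality to an equality.

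Let $\alpha$ be $(w,b)$-critical at level $c$, and let $C$ be the chamber of $\Delta^\ast$ containing $w+b$; its closure contains $w$. Since $\Delta^\ast$ refines $\Delta$, $C$ lies in a chamber $C'$ of $\Delta$ whose closure contains $w$. Take $y=w+\varepsilon b$ with $\varepsilon>0$ small and $y$ general. The first claim is that $\tail_\alpha\notin\Xi_y$. We use description (1) of Lemma \ref{Xiw}, so only differences $m\in M_{\le D}$ have to be tested; this finiteness makes the choice of $\varepsilon$ uniform. The claim follows from three facts:
\begin{itemize}
\item[(i)] $\tail_\alpha\notin\Theta$, because $\tail_\alpha$ is a vertex-like minimal point of its coset at the lowest possible level.
\item[(ii)] By condition (2) of Definition \ref{crit-arrow}, no $\alpha$-integral point of $A_\R$ has $w$-value $<c$.
\item[(iii)] Among the $\alpha$-integral points with $w$-value exactly $c$, $\tail_\alpha$ uniquely minimizes $b$, by condition (3).
\end{itemize}
Points with $w$-value $>c$ and difference in $M_{\le D}$ stay above $\tail_\alpha$ for small $\varepsilon$. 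Hence no $P$ with $\tail_\alpha-P\in M_{\le D}$ has $y(P)<y(\tail_\alpha)$. By Proposition \ref{same-chamber}, $\Xi_{y'}=\Xi_y$ for every general $y'\in C'$, so $\tail_\alpha\notin\Xi_{y'}$.

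Next, for general $y'\in C'$ we have $\tail_\alpha-\head_\alpha\in M$, and $\tail_\alpha\notin\Xi_{y'}$ forbids $y'(\tail_\alpha)>y'(\head_\alpha)$. Generality excludes equality, so $y'(\vect_\alpha)>0$. General points are dense in $C'$, so $x(\vect_\alpha)\ge 0$ on $\overline{C'}$. Since $\tau$ is a cone of $\Delta$ containing $w$ and $w\in\overline{C'}$, the minimal cone $\tau$ is a face of $\overline{C'}$. Therefore $u(\vect_\alpha)\ge0$ for all $u\in\tau$.

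To conclude, use minimality of $\tau$: $w$ lies in the relative interior of $\tau$. So for any $u\in\tau$ and small $\delta>0$, the point $w-\delta u$ is still in $\tau$. Condition (1) gives $w(\vect_\alpha)=0$, so $(w-\delta u)(\vect_\alpha)=-\delta u(\vect_\alpha)\ge0$. Combined with $u(\vect_\alpha)\ge0$, this forces $u(\vect_\alpha)=0$.

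The main obstacle is the claim $\tail_\alpha\notin\Xi_{w+\varepsilon b}$. The delicate points are the uniformity in $\varepsilon$ and excluding competitors at levels above $c$. I would handle both by working only with the finite set $M_{\le D}$ from Lemma \ref{Xiw}, together with $\tail_\alpha\notin\Theta$. The latter should follow from criticality: if $\tail_\alpha\in A_\R+a_i$, then $\tail_\alpha-a_i$ is an $\alpha$-integral point of $A_\R$ with strictly smaller $w$-value or the same $w$-value and smaller $b$-value (since $w\in\sigma$ and $a_i\in A_\R$), contradicting condition (2) or (3).
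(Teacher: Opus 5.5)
The paper gives no proof here; it only cites \cite[Theorem 5.7]{CMDY25}. Your argument is a correct self-contained proof along the expected lines: its key step $\tail_\alpha\notin\Xi_{w+\varepsilon b}$ is exactly \cite[Lemma 5.5]{CMDY25}, which the paper quotes in the proof of Proposition \ref{Crit-vect}, and the remaining steps are sound (constancy of $\Xi$ on chambers of $\Delta$, $\tau$ being a face of $\overline{C'}$, and $w\in\tau^\circ$ with $w(\vect_\alpha)=0$).

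There are two harmless slips. First, generality does not give $y'(\vect_\alpha)\neq0$, since $|\vect_\alpha|$ may exceed $D$; but you only use $y'(\vect_\alpha)\geq0$. Second, in proving $\tail_\alpha\notin\Theta$ in the case $w(a_i)=0$, you need $w+b\in\sigma$, not just $w\in\sigma$. This gives $b(a_i)\geq0$, so $\tail_\alpha-a_i\in\Lambda_{(=c)}$ has $b$-value at most $b(\tail_\alpha)$, which contradicts the uniqueness in condition (3).
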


We improve this result below.

\begin{thm}\label{arrow-dim}
We have $\mu^\perp+V_w$ and $\langle\tau\rangle_\R$ are the orthogonal subspaces of each other. In particular, we have $\dim \tau=\dim\mu-\dim\overline{V_w}$.
\end{thm}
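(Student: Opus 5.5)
The claim is equivalent to $\langle\tau\rangle_\R=(\mu^\perp+V_w)^\perp=\langle\mu\rangle_\R\cap(V_w)^\perp$. Here we use $(\mu^\perp)^\perp=\langle\mu\rangle_\R$. Once this equality is established, Lemma \ref{dim-formula} gives $\dim\tau=\dim\mu-\dim\overline{V_w}$ immediately. I will prove the two inclusions separately.

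\emph{Inclusion $\subset$.} Since $\Delta$ subdivides $\sigma$ and $\tau$ is the minimal cone of $\Delta$ containing $w$, the relative interior of $\tau$ meets the relative interior of $\mu$. Hence $\tau\subset\mu$, because a cone of the subdivision lies in the minimal face of $\sigma$ meeting its relative interior. Therefore $\langle\tau\rangle_\R\subset\langle\mu\rangle_\R$. Combining this with Proposition \ref{min-cone-inclusion}, namely $\tau\subset(V_w)^\perp$, gives $\langle\tau\rangle_\R\subset\langle\mu\rangle_\R\cap(V_w)^\perp$.

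\emph{Inclusion $\supset$.} Argue by contradiction. Suppose $\langle\tau\rangle_\R\subsetneq L:=\langle\mu\rangle_\R\cap(V_w)^\perp$, and pick $u\in L\setminus\langle\tau\rangle_\R$. Since $w\in\operatorname{relint}\mu$ and $u\in\langle\mu\rangle_\R$, we have $w\pm\epsilon u\in\mu$ for small $\epsilon>0$. The points $w+\epsilon u$ and $w-\epsilon u$ cannot both lie in the star of $\tau$ within $\langle\tau\rangle_\R$, because $u\notin\langle\tau\rangle_\R$. Using the local structure of the fan near $w$ (the cones of $\Delta$ containing $w$ are exactly those containing $\tau$), I would then show that some generic perturbations $w_1=w+\epsilon u+\delta b_1$ and $w_2=w+\epsilon' u'+\delta b_2$, with $\delta$ tiny and $w_1,w_2$ general, lie in different chambers of $\Delta$ whose closures contain $w$. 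The difficulty is ensuring that the separating wall is transverse to $u$. A cleaner route is the contrapositive: assume all general points near $w$ whose difference from $w$ lies, modulo small generic corrections, in $\langle\tau\rangle_\R+\R u$ fall into a single chamber. I expect instead to use the following reformulation. The chambers of $\Delta$ adjacent to $w$ cover a neighbourhood of $w$ in $\sigma$, and the linear span of the minimal cone $\tau$ equals the intersection of the spans of the walls separating adjacent chambers around $w$, together with $\langle\mu\rangle_\R$. So if $u\notin\langle\tau\rangle_\R$, some wall $H$ through $w$, separating two adjacent chambers $C_1,C_2$, satisfies $u\notin H$ while $u\in\langle\mu\rangle_\R$. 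Choose general $w_1\in C_1$ and $w_2\in C_2$ close to $w$ with $w_1-w_2$ close to a multiple of $u$ modulo $H$.

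Next apply Proposition \ref{exists-crit-arrow}. It gives a $w$-critical arrow $\alpha$ with $w_1(\vect_\alpha)\neq w_2(\vect_\alpha)$. But $\vect_\alpha\in V_w$, and $u\perp V_w$. If $w_1-w_2$ can be arranged to lie in $\R u+(V_w)^\perp$, then $(w_1-w_2)(\vect_\alpha)=0$, a contradiction. Arranging this is plausible because $\R u+\langle\tau\rangle_\R\subset(V_w)^\perp$ and the wall-crossing direction can be chosen inside $(V_w)^\perp\cap\langle\mu\rangle_\R$.

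The main obstacle is exactly this last geometric step. One must show that if $\langle\tau\rangle_\R$ is strictly smaller than $L$, then two chambers adjacent to $w$ can be separated by moving within $w+L$ (plus negligible perturbations that keep genericity). This needs the fact that chambers of $\Delta$ near $w$ intersected with the affine space $w+L$ still give a fan structure whose minimal cone through $w$ is $\tau$. I would prove it by slicing: the cones of $\Delta$ containing $\tau$, intersected with $L$, form a complete fan in $L$ modulo $\langle\tau\rangle_\R$ (locally near $w$, using that $w$ is interior to $\mu$ and $L\subset\langle\mu\rangle_\R$). A complete fan in a quotient space of positive dimension has at least two maximal cones. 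Genericity of points in these maximal cones, relative to all of $N_\R$, is then achieved by a small perturbation. Since $\vect_\alpha$ lies in the finite set $M_{\le D}$, the inequality $w_1(\vect_\alpha)\neq w_2(\vect_\alpha)$ must then already fail in the limit, which gives the contradiction.
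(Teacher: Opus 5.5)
Your first inclusion is fine, including the justification of $\tau\subset\mu$. Your strategy for the reverse inclusion matches the paper's: find two distinct chambers of $\Delta$ adjacent to $w$ whose sample points differ by a vector in $(V_w)^\perp$, then contradict Proposition \ref{exists-crit-arrow}. Your slicing argument also works. The cones $\gamma\cap L$ with $\tau\preceq\gamma\in\Delta$ give a complete fan in $L/\langle\tau\rangle_\R$, so you get points $x_1,x_2\in w+L$ near $w$ lying in the relative interiors of two different maximal sliced cones.

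\textbf{The gap is the final step.} You perturb independently to general points $w_i=x_i+\eta b_i$. Then $w_1-w_2=(x_1-x_2)+\eta(b_1-b_2)$ is no longer in $(V_w)^\perp$. The limit argument cannot repair this, because $(w_1-w_2)(\vect_\alpha)=\eta(b_1-b_2)(\vect_\alpha)$ tends to $0$ without ever having to vanish. Concretely, the proof of Proposition \ref{exists-crit-arrow} yields $v=\vect_\alpha$ with $w_1(v)>0>w_2(v)$. Since $w(v)=0$ and $x_i-w\in L\subset(V_w)^\perp$, this only says $b_1(v)>0>b_2(v)$. That is perfectly consistent for independent $b_1,b_2$, whatever finiteness you have for $v$. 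Moreover, Proposition \ref{exists-crit-arrow} as stated does not place $\vect_\alpha$ in $M_{\le D}$.

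\textbf{The missing idea is to use one and the same perturbation for both points.} The paper does exactly this with $w_{1,2}=w\pm\delta u+\varepsilon b$ for a single tie-breaker $b$ and $0<\varepsilon\ll\delta$. In your setting, take $w_i=x_i+\eta b$ with one generic $b$. Then:
\begin{itemize}
\item $w_1-w_2=x_1-x_2\in L\subset(V_w)^\perp$ exactly.
\item Each $w_i$ is general for small $\eta$: for $m\in M_{\le D}$ with $x_i(m)=0$, use $b(m)\neq0$.
\item The chamber containing $w_i$ has closure meeting $L$ precisely in the maximal sliced cone through $x_i$. Hence the two chambers are distinct and their closures contain $\tau$.
\end{itemize}
With this change your argument closes. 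Your slicing then plays the role of the paper's observation that $\pm\delta\widetilde{u}$ cannot lie in a common cone of $\Star(\tau)$.
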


\begin{proof}
From Proposition \ref{min-cone-inclusion}, we have $\langle\tau\rangle_\R\subset\langle\mu\rangle_\R\cap(V_w)^\perp$. Suppose that $\dim\tau<\dim(\langle\mu\rangle_\R\cap(V_w)^\perp)$ and 
let $u\in\mu\cap(V_w)^\perp\setminus\langle\tau\rangle_\R$. We take a sufficiently small $\delta$ so that $w\pm\delta u$ are in the relative interior $\mu^\circ$ of $\mu$.

We now recall the notion of star. For each cone $\gamma\in\Delta$ containing $\tau$, let $\widetilde{\gamma}$ be the image cone in $N_\R/\langle\tau\rangle_\R$ under the quotient map $N_\R\to N_\R/\langle\tau\rangle_\R$. Then
\begin{equation*}
\Star(\tau)\coloneqq\{\widetilde{\gamma}\subset N_\R/\langle\tau\rangle_\R\mid\tau\preceq\gamma\in\Delta\}
\end{equation*}
is a fan in $N_\R/\langle\tau\rangle_\R$, called the star of $\tau$ (see, for example \cite[Section 3.1]{Ful93}).

Since all cones of $\Star(\tau)$ have the $\widetilde{\tau}$ as a face, equivalently, all cones of $\Star(\tau)$ are strongly convex, no cone can contain both $\delta\widetilde{u}$ and $-\delta\widetilde{u}$. Thus, $\delta\widetilde{u}$ and $-\delta\widetilde{u}$ are not in the closure of same chamber of $\Star(\tau)$. Let $b$ be a tie-breaker of $w$ and let
\begin{equation*}
w_1\coloneqq w+\delta u+\varepsilon b,w_2\coloneqq w-\delta u+\varepsilon b,
\end{equation*}
where $\varepsilon>0$. Then, for sufficiently small $0<\varepsilon\ll\delta$, $\widetilde{w_1}$ and $\widetilde{w_2}$ are in different chambers of $\Star(\tau)$. Thus, $w_1$ and $w_2$ are in different chambers of $\Delta$ whose closure contains $\tau$ as a face. Hence, from Proposition \ref{exists-crit-arrow}, there exists a $w$-critical arrow $\alpha$ in $A_\R$ such that $w_1(\vect_\alpha)\neq w_2(\vect_\alpha)$. However, since $w_1-w_2=2\delta u\in(V_w)^\perp$, for every $w$-critical arrow $\beta$ in $A_\R$, we have $w_1(\vect_\beta)=w_2(\vect_\beta)$, which is a contradiction. Therefore, we obtain
\[
\langle\tau\rangle_\R=\langle\mu\rangle_\R\cap(V_w)^\perp=(\mu^\perp+V_w)^\perp.
\]
In particular, from Lemma \ref{dim-formula}, we have $\dim\tau=\dim\mu-\dim{\overline{V_w}}$.
\end{proof}


\begin{cor}\label{min}
The prime divisor associated with $w$ has center of codimension 
$\dim\mu-\dim\overline{V_w}$ on $\widetilde{X}$.
\end{cor}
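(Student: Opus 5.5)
The plan is to deduce the corollary from Theorem \ref{arrow-dim} and the orbit–cone correspondence for the normal toric variety $\widetilde{X}=X_\Delta$. Let $w$ be primitive; if it is not, replace it by the primitive element on its ray. This changes neither $\mu$, nor $\tau$, nor $V_w$. The set of $w$-critical arrows is unchanged because the conditions in Definition \ref{crit-arrow} are invariant under positive rescaling of $w$: one rescales the level $c$, and tie-breakers scale accordingly. The prime divisor $E_w$ corresponds to the ray $\R_{\geq0}w$, viewed as a cone of a refinement of $\Delta$.

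\textbf{Step 1: identify the center.} Take a toric birational model $Z\to\widetilde{X}$ on which $E_w$ appears. For instance, let $Z$ correspond to the star subdivision of $\Delta$ at the ray $\R_{\ge0} w$. The torus-invariant divisor $E_w=V(\R_{\geq0}w)$ maps onto the closure of the torus orbit $O(\gamma)$, where $\gamma\in\Delta$ is the cone whose relative interior contains $w$. This is the standard fact that a toric morphism induced by a refinement of fans sends $O(\rho)$ into $O(\gamma)$, with $\gamma$ the smallest cone of the coarser fan containing $\rho$; see \cite[Lemma 3.3.21]{CLS11}. The map $E_w\to V(\gamma)$ is also surjective, since it is torus-equivariant and its image is a closed invariant irreducible set meeting $O(\gamma)$. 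The smallest cone of $\Delta$ containing $w$ is exactly $\tau$, and $w$ lies in $\tau^\circ$ because $\tau$ is minimal. Hence the center of $E_w$ on $\widetilde{X}$ is $V(\tau)$, and its codimension in $\widetilde{X}$ is $\dim\tau$ by the orbit–cone correspondence.

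\textbf{Step 2: apply the theorem.} Theorem \ref{arrow-dim} gives $\dim\tau=\dim\mu-\dim\overline{V_w}$. Together with Step 1 this gives the stated codimension.

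As a consistency check, the dimension of the center is $d-\dim\tau$. By the orthogonality statement in Theorem \ref{arrow-dim}, this equals $\dim(\mu^\perp+V_w)$, which matches the introduction.

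The only point needing care is the claim in Step 1 that the center is $V(\tau)$ exactly, rather than merely contained in it. I expect no real obstacle there: it is standard toric geometry, and minimality of $\tau$ means $w$ lies in the relative interior of $\tau$.
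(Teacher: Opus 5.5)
Your proposal is correct and follows the paper's own argument. Both pass to a refinement of $\Delta$ containing the ray $\R_{\geq0}w$, use \cite[Lemma 3.3.21]{CLS11} to see that the center of $E_w$ on $\widetilde{X}$ is the orbit closure $V(\tau)$ of codimension $\dim\tau$, and conclude with Theorem \ref{arrow-dim}; your remarks on primitivity and on surjectivity onto $V(\tau)$ only spell out details the paper leaves implicit.
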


\begin{proof}
Let $\Delta'$ be a refinement of $\Delta$ containing the ray $ \R_{\geq 0}w$ and let $X'$ be the corresponding toric variety. The prime divisor $E_w \subset X'$ corresponding to $\R_{\geq 0}w$ is the closure of the torus orbit $O_w$ corresponding $\R_{\geq 0}w$. From
\cite[Lemma 3.3.21(a)]{CLS11}, the image $O_w$ in $\widetilde{X}$ is the torus orbit $O_\tau$ corresponding to $\tau$. This shows that the center of $E_w$ on $\widetilde{X}$ is the closure of $O_\tau$ and has codimension equal to the dimension of $\tau$.  From the Theorem \ref{arrow-dim}, this is equal to $\dim \mu -\dim \overline{V_w}$. 
\end{proof}

\section{A computational method of finding critical arrows}\label{compute-crit-arrow}

In order to determine the dimension of the minimal cone $\tau\in\Delta$ containing $w$, we establish a computational method for finding the $w$-critical arrows.

\subsection{A description of the set of critical arrows with bounded lengths}

We fix $w\in B\setminus\{0\}$ and let $\mu$ be the minimal face of $\sigma$ containing $w$. Thus, $w$ is in the relative interior $\mu^\circ$ of $\mu$. The dual face $\mu^\ast$ of $\mu$ is, by definition, the face of $\sigma^\vee=A_\R$ given by $\sigma^\vee\cap\mu^\perp$. From \cite[Exercise 1.2.2]{CLS11}, we have $\mu^\ast=A_\R\cap w^\perp$. From $\dim\mu+\dim\mu^\ast=d$, we also deduce $\langle\mu^\ast\rangle_\R=\mu^\perp$.

\begin{prop}\label{Crit-vect}
For any $D'\geq D$ (see \eqref{diameter} in Section \ref{critical-arrows} for the definition of $D$), we have
\begin{equation*}
\overline{V_w}=\langle\overline{\vect_\alpha}\mid\alpha\text{ is a $w$-critical arrow in $A_\R$ with }|\vect_\alpha|\leq D'\rangle_\R.
\end{equation*}
\end{prop}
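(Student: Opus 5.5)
The inclusion $\supseteq$ is trivial, since the right-hand side is spanned by the images of a subset of the vectors that span $V_w$. For the reverse inclusion I would not try to shorten long critical arrows directly. Instead I would rerun the proof of Theorem \ref{arrow-dim} with $V_w$ replaced by the smaller space
\[
V'_w\coloneqq\langle\vect_\alpha\mid\alpha\text{ is a $w$-critical arrow in $A_\R$ with }|\vect_\alpha|\leq D'\rangle_\R\subset V_w,
\]
and then compare dimensions.

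\textbf{Step 1: a length bound in Proposition \ref{exists-crit-arrow}.} I would show that the critical arrow produced there can be chosen with $|\vect_\alpha|\leq D\leq D'$. In that proof we pick $\head\in\Xi_{w_1}\setminus\Xi_{w_2}$ and use description (1) of Lemma \ref{Xiw} for $\Xi_{w_1}$. This gives two cases.
\begin{itemize}
\item[(a)] $\head\in\Theta$. Then $\head=a_i+x$ with $x\in A_\R$. Taking $\tail=x$ gives $\head-\tail=a_i\in M$. Since $w_2\in\sigma$ is general and $a_i\ne 0$, we should have $w_2(a_i)>0$. This needs $|a_i|\le D$, or more simply that $w_2$ lies in the interior of $\sigma$, which holds for $w_2$ near $w$ after the perturbation. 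Then description (2) of Lemma \ref{Xiw} gives $\head\in\Xi_{w_2}$, a contradiction. So this case cannot occur.
\item[(b)] $\head\notin\Theta$. Then there is $\tail\in A_\R$ with $\head-\tail\in M^{w_1+}_{\leq D}$.
\end{itemize}
So we may always take $\tail$ with $|\head-\tail|\leq D$. The remainder of the proof of Proposition \ref{exists-crit-arrow} is unchanged, and it yields a $w$-critical arrow $\alpha$ with $|\vect_\alpha|\le D$ and $w_1(\vect_\alpha)\neq w_2(\vect_\alpha)$.

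\textbf{Step 2: repeat Theorem \ref{arrow-dim} with $V'_w$.} Since $V'_w\subset V_w$, Proposition \ref{min-cone-inclusion} gives $\langle\tau\rangle_\R\subset\langle\mu\rangle_\R\cap (V'_w)^\perp$. Suppose the inclusion is strict, and pick $u\in \mu\cap(V'_w)^\perp\setminus\langle\tau\rangle_\R$. The star argument then gives $w_1=w+\delta u+\varepsilon b$ and $w_2=w-\delta u+\varepsilon b$ in different chambers of $\Delta$. Step 1 supplies a short critical arrow $\alpha$ with $w_1(\vect_\alpha)\ne w_2(\vect_\alpha)$. But $\vect_\alpha\in V'_w$ and $w_1-w_2=2\delta u\perp V'_w$, a contradiction. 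Hence
\[
\langle\tau\rangle_\R=\langle\mu\rangle_\R\cap (V'_w)^\perp .
\]

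\textbf{Step 3: compare dimensions.} Applying Lemma \ref{dim-formula}, whose proof works for any subspace, to both $V'_w$ and $V_w$, and using Theorem \ref{arrow-dim}, gives
\[
\dim\overline{V'_w}=\dim\mu-\dim\tau=\dim\overline{V_w}.
\]
Together with $\overline{V'_w}\subset\overline{V_w}$, this forces equality.

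The main obstacle I expect is Step 1(a): justifying $w_2(a_i)>0$ for every Hilbert basis element $a_i$. If $w_2$ cannot be assumed to lie in the interior of $\sigma$, I would handle this through genericity with respect to $M_{\le D}$. If some $a_i$ could be longer than $D$, I would fall back on first subtracting a minimal Hilbert basis element and checking directly that the resulting tail still witnesses membership in $\Xi_{w_2}$.
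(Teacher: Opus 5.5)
Your argument is correct, but it takes a different route from the paper. The paper shortens each critical arrow directly. For a $(w,b)$-critical $\alpha$, \cite[Lemma 5.5]{CMDY25} and Lemma \ref{Xiw} give $\tail_\alpha\notin\Theta$. While the head lies in $\Theta$, one subtracts a Hilbert basis element $a$ of $A$; condition (2) of Definition \ref{crit-arrow} forces $w(a)=0$, so the new pair is again $w$-critical with the same tail. Once the head leaves $\Theta$, both endpoints lie in $A_\R\setminus\Theta$, so the vector has length at most $D$. The head has moved by an element of $A_\R\cap w^\perp=\mu^\ast\subset\mu^\perp$, so the image in $M_\R/\mu^\perp$ is unchanged. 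You never shorten an arrow. Instead you show that short critical arrows already separate the chambers of $\Delta$ at $w$, rerun the star argument of Theorem \ref{arrow-dim} with $V'_w$, and compare dimensions. Step 3 can even be collapsed to the chain $\langle\tau\rangle_\R\subset\langle\mu\rangle_\R\cap V_w^\perp\subset\langle\mu\rangle_\R\cap(V'_w)^\perp=\langle\tau\rangle_\R$, which gives $\mu^\perp+V'_w=\mu^\perp+V_w$ at once.

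The obstacle you anticipate in Step 1(a) does not arise. Description (1) of Lemma \ref{Xiw} contains $\Theta$, so $\head\notin\Xi_{w_2}$ already forces $\head\notin\Theta$. Your detour is also valid, since $|a_i|\le D$: both $0$ and $(1-\epsilon)a_i$ lie in $A_\R\setminus\Theta$ for small $\epsilon>0$.

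As for what each route buys: yours is quick once the machinery of Theorem \ref{arrow-dim} is in place, and your choice $\head-\tail\in M_{\le D}$ is exactly what the appeal to genericity in Proposition \ref{exists-crit-arrow} needs. However, it relies on the internals of that proof, not just its statement: you need the particular pair $(\head,\tail)$ built there to be $w$-critical. That proof checks conditions (1) and (2) of Definition \ref{crit-arrow} but does not address (3), the tail being the $b$-minimal vertex of $\Lambda_{(=c)}$. If the tail had to be moved to a vertex, your length bound would not come for free. The paper's argument uses only the definition and \cite[Lemma 5.5]{CMDY25}, never involves $\Delta$, and proves a stronger pointwise statement: every $w$-critical arrow agrees modulo $\mu^\perp$ with a critical arrow of length at most $D$ having the same tail.
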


\begin{proof}
It suffices to prove the statement in case $D'=D$. Let $\alpha$ be a $(w,b)$-critical arrow. According to \cite[Lemma 5.5]{CMDY25}, we have $\tail_\alpha\notin\Xi_{w+\delta b}$ for all $\delta\in(0,1]$ (see Section \ref{critical-arrows}). Therefore, from Lemma \ref{Xiw}, we have $\tail_\alpha\notin\Theta$. Suppose that $\head_\alpha\in\Theta$. Then, from the definition of $\Theta$ (\eqref{Theta} in Section \ref{critical-arrows}), there exists $\head\in A_\R$ such that $\head_\alpha-\head=a$, where $a$ is an element of the Hilbert basis of $A_\R$. If $\head=\tail_\alpha$, then $|\vect_\alpha|=|a|\leq D$. If $\head\neq\tail_\alpha$, then by condition (2) in the Definition \ref{crit-arrow}, we have $a\in w^\perp$. In this case, $\alpha'=(\head,\tail_\alpha)$ is again a $w$-critical arrow. If $\head\in\Theta$, then we apply the above argument to $\alpha'$ to get a new $w$-critical arrow $\alpha''=(\head',\tail_\alpha)$, where $\head-\head'$ is an element of the Hilbert basis. Repeating this operation finitely many times, we eventually get a $w$-critical arrow $\beta=(\head_\beta,\tail_\alpha)$ with $\head_\beta\notin\Theta$. Then, we have $|\vect_\beta|\leq D$. Since $\head_\alpha-\head_\beta$ is contained in $A_\R\cap w^\perp\subset\mu^\perp$, we have $\overline{\vect_\alpha}=\overline{\vect_\beta}$ in $\overline{V_w}$.
\end{proof}

\begin{dfn}\label{delta}
Let $\sigma=\langle v_1,\ldots,v_d\rangle_{\R_{\geq0}}$ be a nondegenerate simplicial cone of $N_\R$. Let $w\in B\setminus\{0\}$ and let $\mu$ be the minimal face of $\sigma$ containing $w$. For each $1\leq i\leq d$ with $v_i\in\mu$, we define cones $\delta_i^w\subset N_\R$ by
\[
\delta_i^w=\delta_i\coloneqq\langle v_1,\ldots,v_{i-1},-w,v_{i+1},\ldots,v_d\rangle_{\R_{\geq0}}.
\]
\end{dfn}
By the choice of $i$, $\delta_i^w$ is a $d$-dimensional simplicial cone. Let $i\in\{1,\ldots,d\}$ such that $v_i\in\mu$ and let $v_1^\ast,\ldots,v_d^\ast$ be a dual basis of $v_1,\ldots,v_d$. Let
\[
L=\{l\in\{1,\ldots,d\}\setminus\{i\}\mid v_l^\ast\in w^\perp\},R=\{r\in\{1,\ldots,d\}\setminus\{i\}\mid v_r^\ast\notin w^\perp\},
\]
so that we have $\{1,\ldots,d\}=L\sqcup R\sqcup\{i\}$. We define vectors $u_1,\ldots,u_d$ by rescaling $v_1^\ast,\ldots,v_d^\ast$ as follows: For $j\in R\cup\{i\}$, we set $u_j$ to be the positive multiple of $v_j^\ast$ with $(u_j,w)=1$. For $l\in L$, we set $u_l=v_l^\ast$.
\begin{lem}\label{delta-dual}
We have
\[
(\delta_i^w)^\vee=\langle-u_i,u_l,u_r-u_i\mid l\in L,r\in R\rangle_{\R_{\geq0}}.
\]
\end{lem}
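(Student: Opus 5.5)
The plan is to compute the dual of the simplicial cone $\delta_i^w$ directly. The dual of a nondegenerate simplicial cone generated by a basis $g_1,\ldots,g_d$ is generated by the dual basis $g_1^\ast,\ldots,g_d^\ast$, so it suffices to find the dual basis of
\[
v_1,\ldots,v_{i-1},-w,v_{i+1},\ldots,v_d
\]
and check that, after positive rescaling, it is exactly the list $-u_i$, $u_l$ ($l\in L$), $u_r-u_i$ ($r\in R$). Equivalently, I will show that each proposed generator pairs to zero with all but one of the generators of $\delta_i^w$, and positively with the remaining one.

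First write $w=\sum_j c_jv_j$ with $c_j=(v_j^\ast,w)\ge 0$. Since $\mu$ is the face spanned by those $v_j$ lying in $\mu$ and $w\in\mu^\circ$, we have $c_j>0$ exactly when $v_j\in\mu$. In particular $c_i>0$, which is also why $\delta_i^w$ is $d$-dimensional. By definition $L=\{l\ne i: c_l=0\}$ and $R=\{r\ne i: c_r>0\}$. Moreover $u_j=v_j^\ast/c_j$ for $j\in R\cup\{i\}$, and $u_l=v_l^\ast$ for $l\in L$.

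Then I check the pairings one generator at a time.

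\begin{enumerate}
\item For $-u_i$: it pairs to $0$ with $v_j$ for $j\ne i$, and $(-u_i,-w)=1>0$.
\item For $u_l$ with $l\in L$: it pairs to $0$ with $v_j$ for $j\ne l$, and $(u_l,-w)=-c_l=0$. Its pairing with $v_l$ is $1$, so $u_l$ is dual to the generator $v_l$.
\item For $u_r-u_i$ with $r\in R$: it pairs to $0$ with $v_j$ for $j\notin\{i,r\}$. The slot $i$ is occupied by $-w$, and $(u_r-u_i,-w)=-(1-1)=0$. Finally $(u_r-u_i,v_r)=1/c_r>0$.
\end{enumerate}

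Thus the $d$ proposed vectors are positive multiples of the dual basis of the generators of $\delta_i^w$. Their cone is therefore $(\delta_i^w)^\vee$.

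There is no real obstacle here. The only point needing care is the bookkeeping that $c_i>0$ and that $c_l=0$ exactly for $l\in L$; both follow from $w\in\mu^\circ$, $v_i\in\mu$, and the definitions of $L$ and $R$.
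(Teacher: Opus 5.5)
Your proof is correct and follows essentially the same route as the paper: both check, generator by generator, that $-u_i$, $u_l$ ($l\in L$) and $u_r-u_i$ ($r\in R$) are positive rescalings of the dual basis of $v_1,\ldots,v_{i-1},-w,v_{i+1},\ldots,v_d$. Your bookkeeping via $c_j=(v_j^\ast,w)$, with $c_j>0$ exactly when $v_j\in\mu$, makes explicit two points the paper leaves implicit: why $\delta_i^w$ is nondegenerate, and why $u_j=v_j^\ast/c_j$ for $j\in R\cup\{i\}$.
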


\begin{proof}
Note that $(-u_i,v_j)=0$ for all $j\in\{1,\ldots,d\}\setminus\{i\}$ and $(-u_i,-w)=1$. For $l\in L$, we have $(u_l,v_j)=0$ for all $j\in\{1,\ldots,d\}\setminus\{l,i\}$ and $(u_l,-w)=0$. For $r\in R$, we have $(u_r-u_i,v_j)=0$ for all $j\in\{1,\ldots,d\}\setminus\{r,i\}$ and $(u_r-u_i,-w)=0$. Let $c_l\coloneqq(u_l,v_l)\in\R_{>0}$ for $l\in L$ and $c_r\coloneqq(u_r-u_i,v_r)=(u_r,v_r)\in\R_{>0}$ for $r\in R$. Then, the vectors $-u_i,c_l^{-1}u_l,c_r^{-1}(u_r-u_i)$ are dual basis of $v_1,\ldots,v_{i-1},-w,v_{i+1},\ldots,v_d$. Therefore, we have
\begin{align*}
(\delta_i^w)^\vee&=\langle -u_i,c_l^{-1}u_l,c_r^{-1}(u_r-u_i)\mid l\in L,r\in R\rangle_{\R_{\geq0}} \\
&=\langle-u_i,u_l,u_r-u_i\mid l\in L,r\in R\rangle_{\R_{\geq0}}.
\end{align*}
\end{proof}

Let $i\in\{1,\ldots,d\}$ such that $v_i\in\mu$. We define a linear function
\begin{equation}\label{linear-function}
f_i\colon\R\to\R;x\mapsto x(u_i,v_i).
\end{equation}
Let $\tau_i$ denote the unique facet of $A_\R$ which does not contain $u_i$, namely
\begin{equation}\label{facet}
\tau_i=\langle u_l,u_r\mid l\in L,r\in R\rangle_{\R_{\geq0}}.
\end{equation}

\begin{lem}\label{transpose}
For $c\in\R_{>0}$, we have
\[
\Lambda_{(\leq c)}^w=((\delta_i^w)^\vee\cap\{-v_i\leq f_i(c)\})+cu_i.
\]
\end{lem}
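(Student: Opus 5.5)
The plan is to verify the equality pointwise by a direct translation argument: set $y=x-cu_i$ and show that $x\in\Lambda_{(\leq c)}^w$ if and only if $y\in(\delta_i^w)^\vee$ and $(y,-v_i)\leq f_i(c)$. I will describe $(\delta_i^w)^\vee$ by its defining inequalities rather than by the generators of Lemma \ref{delta-dual}. By definition of the dual cone,
\[
(\delta_i^w)^\vee=\{y\in M_\R\mid (y,v_j)\geq0\ \text{for } j\neq i,\ (y,-w)\geq0\},
\]
and similarly $A_\R=\sigma^\vee=\{x\mid (x,v_j)\geq0\ \text{for all } j\}$. This way everything reduces to comparing a finite list of linear inequalities.

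First I will record the properties of $u_i$ that are needed. Since $u_i$ is a positive multiple of $v_i^\ast$, we have $(u_i,v_j)=0$ for $j\neq i$. We also need $(u_i,w)=1$, i.e.\ that $i$ is in the rescaled range, which requires $(v_i^\ast,w)>0$. This follows because $w$ lies in the relative interior of $\mu$, and $\mu$ is the face of the simplicial cone $\sigma$ spanned by those $v_j$ lying in it. Hence $w=\sum_{v_j\in\mu}a_jv_j$ with all $a_j>0$, and in particular $a_i=(v_i^\ast,w)>0$ because $v_i\in\mu$. Finally, $f_i(c)=c(u_i,v_i)$.

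Next I will compare the inequalities one by one, for $x\in M_\R$ and $y=x-cu_i$:
\begin{align*}
(y,v_j)&=(x,v_j)\quad (j\neq i),\\
(y,-w)&=c-(x,w),\\
(y,-v_i)&=c(u_i,v_i)-(x,v_i)=f_i(c)-(x,v_i).
\end{align*}
Thus $(y,v_j)\geq0$ for $j\neq i$ matches $(x,v_j)\geq0$ for $j\neq i$. The condition $(y,-w)\geq0$ matches $(x,w)\leq c$. The condition $(y,-v_i)\leq f_i(c)$ matches $(x,v_i)\geq0$. Together the right-hand conditions say exactly that $x\in A_\R\cap\{w\leq c\}=\Lambda_{(\leq c)}^w$, and the left-hand conditions say exactly that $y\in(\delta_i^w)^\vee\cap\{-v_i\leq f_i(c)\}$. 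This gives both inclusions at once.

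There is no real obstacle here; the only point requiring care is justifying $(u_i,w)=1$, i.e.\ that $v_i^\ast$ is not orthogonal to $w$. This uses the hypothesis $v_i\in\mu$ together with $w\in\mu^\circ$ and the simpliciality of $\sigma$, as explained above.
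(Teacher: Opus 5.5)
Your proof is correct, but it takes a different route from the paper's. The paper works with generators. It writes a point of $\Lambda_{(\leq c)}^w$ as $p_iu_i+\sum_{l}p_lu_l+\sum_{r}p_ru_r$ with nonnegative coefficients, and exhibits $P-cu_i$ as a nonnegative combination of the generators $-u_i,\,u_l,\,u_r-u_i$ of $(\delta_i^w)^\vee$ from Lemma \ref{delta-dual}. For the converse it expands $u$ in those generators, uses $(u,-v_i)\leq f_i(c)$ to show the coefficient sum is at most $c$, and concludes $u+cu_i\in A_\R$.

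You instead use the half-space descriptions of $A_\R$ and $(\delta_i^w)^\vee$, and observe that translation by $cu_i$ matches their defining inequalities one by one. This gives both inclusions at once. It needs only $(u_i,v_j)=0$ for $j\neq i$ and $(u_i,w)=1$, so Lemma \ref{delta-dual} is never used. Your argument is shorter and makes clear that the lemma is just an affine change of coordinates. You also justify $(v_i^\ast,w)>0$ from $w\in\mu^\circ$ and simpliciality, which the paper leaves implicit when it normalizes $u_i$.

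The paper's generator-based computation has its own use: the same coordinate bookkeeping, e.g.\ $(P-cu_i,-v_i)=(c-p_i)(u_i,v_i)$, reappears in the proof of Proposition \ref{Crit-set}.
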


\begin{proof}
With the notation introduced in Lemma \ref{delta-dual} the dual cone $A_\R$ of $\sigma$ can be written as
\begin{align*}
A_\R&=\langle v_1^\ast,\ldots,v_d^\ast\rangle_{\R_{\geq0}} \\
&=\langle u_i,u_l,u_r\mid l\in L,r\in R\rangle_{\R_{\geq0}}.
\end{align*}
Let $P\in\Lambda_{(\leq c)}$. Since $\Lambda_{(\leq c)}\subset A_\R$, we may write
\[
P=p_iu_i+\sum_{l\in L}p_lu_l+\sum_{r\in R}p_ru_r
\]
with $p_i,p_l,p_r\in\R_{\geq0}$. Since $(P,w)=p_i+\sum_{r\in R}p_r\leq c$, Lemma \ref{delta-dual} gives
\[
P-cu_i=\left((p_i+\sum_{r\in R}p_r)-c\right)u_i+\sum_{l\in L}p_lu_l+\sum_{r\in R}p_r(u_r-u_i)\in(\delta_i^w)^\vee.
\]
On the other hand, we can also write
\[
P-cu_i=(p_i-c)u_i+\sum_{l\in L}p_lu_l+\sum_{r\in R}p_ru_r,
\]
which shows
\[
(P-cu_i,-v_i)=(c-p_i)(u_i,v_i)\leq c(u_i,v_i)=f_i(c).
\]
Thus, $P\in((\delta_i^w)^\vee\cap\{-v_i\leq f_i(c)\})+cu_i$ and hence $\Lambda_{(\leq c)}\subset((\delta_i^w)^\vee\cap\{-v_i\leq f_i(c)\})+cu_i$.

To show the opposite, let $u\in(\delta_i^w)^\vee\cap\{-v_i\leq f_i(c)\}$. Then from Lemma \ref{delta-dual}, we may write
\[
u=p_i(-u_i)+\sum_{l\in L}p_lu_l+\sum_{r\in R}p_r(u_r-u_i)
\]
with $p_i,p_l,p_r\in\R_{\geq0}$. Since
\[
\left(p_i+\sum_{r\in R}p_r\right)(u_i,v_i)=(u,-v_i)\leq f_i(c)=c(u_i,v_i),
\]
we have $\left(p_i+\sum_{r\in R}p_r\right)\leq c$. Thus, we have
\[
u+cu_i=\left(c-(p_i+\sum_{r\in R}p_r)\right)u_i+\sum_{l\in L}p_lu_l+\sum_{r\in R}p_ru_r\in A_\R.
\]
By definition of $\delta_i^w$ (Definition \ref{delta}), we have $(u,w)\leq0$. Hence, we have
\begin{equation}\label{u+cui,w}
(u+cu_i,w)=(u,w)+c(u_i,w)\leq c.
\end{equation}
Thus, we have $u+cu_i\in\Lambda_{(\leq c)}$. Therefore, we obtain
\[
\Lambda_{(\leq c)}\supset((\delta_i^w)^\vee\cap\{-v_i\leq f_i(c)\})+cu_i.
\]
\end{proof}

\begin{lem}\label{delta-Lambda}
For $c\in\R_{>0}$, the map
\begin{equation}\label{area}
\{\alpha=(\head_\alpha,\tail_\alpha)\in(\Lambda_{(\leq c)}^w)^2\mid\head_\alpha\in\tau_i,\tail_\alpha=cu_i,w(\head_\alpha)\leq c\}\to(\delta_i^w)^\vee\cap\{-v_i=f_i(c)\}
\end{equation}
defined by $\alpha\mapsto\vect_\alpha$ is a bijection. The inverse map is given by
\begin{equation}\label{area-area}
u\mapsto (u+cu_i,cu_i).
\end{equation}
Under this correspondence, an arrow $\alpha$ in $\Lambda_{(=c)}^w$ such that $\head_\alpha\in\tau_i,\tail_\alpha=cu_i$ corresponds to the associated vector $\vect_\alpha$ in $(\delta_i^w)^\vee\cap\{-v_i=f_i(c)\}\cap w^\perp$.
\end{lem}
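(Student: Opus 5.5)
The approach is to reduce everything to Lemma \ref{transpose}, which identifies $\Lambda_{(\leq c)}^w$ with the translate $((\delta_i^w)^\vee\cap\{-v_i\leq f_i(c)\})+cu_i$. First I check that the map $\alpha\mapsto\vect_\alpha$ lands in the stated target. Given $\alpha$ with $\tail_\alpha=cu_i$ and $\head_\alpha\in\Lambda_{(\leq c)}\cap\tau_i$, Lemma \ref{transpose} gives $\vect_\alpha=\head_\alpha-cu_i\in(\delta_i^w)^\vee\cap\{-v_i\leq f_i(c)\}$. To upgrade the inequality to equality, I will use the expansion from the proof of that lemma. Write $\head_\alpha=p_iu_i+\sum_l p_lu_l+\sum_r p_ru_r$. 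Then $(\head_\alpha-cu_i,-v_i)=(c-p_i)(u_i,v_i)$. The condition $\head_\alpha\in\tau_i$ means exactly $p_i=0$, since $\tau_i$ is the facet spanned by the $u_l,u_r$, that is, $\tau_i=A_\R\cap v_i^\perp$. Hence $(\vect_\alpha,-v_i)=c(u_i,v_i)=f_i(c)$.

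Next I verify that \eqref{area-area} is a well-defined inverse. Take $u\in(\delta_i^w)^\vee\cap\{-v_i=f_i(c)\}$. By Lemma \ref{transpose}, $u+cu_i\in\Lambda_{(\leq c)}$, and $cu_i\in\Lambda_{(\leq c)}$ as well, because $(cu_i,w)=c$. The bound $w(u+cu_i)\leq c$ is \eqref{u+cui,w}. It remains to show $u+cu_i\in\tau_i$, which amounts to $(u+cu_i,v_i)=0$. This holds because $(u,v_i)=-f_i(c)=-c(u_i,v_i)$. The two maps are visibly mutually inverse, since $\vect_{(u+cu_i,cu_i)}=u$, and an arrow with fixed tail $cu_i$ is determined by its vector. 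One should also note that the head and tail are distinct, i.e. $u\neq0$. This is automatic because $(u,-v_i)=f_i(c)>0$, using $c>0$ and $(u_i,v_i)>0$.

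Finally, for the last assertion I restrict the bijection. The tail $cu_i$ always lies on $\{w=c\}$, so $\alpha$ lies in $\Lambda_{(=c)}$ exactly when $w(\head_\alpha)=c$. This is equivalent to $w(\vect_\alpha)=w(\head_\alpha)-c=0$, i.e. $\vect_\alpha\in w^\perp$.

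The only delicate point is the identification $\tau_i=A_\R\cap\{v_i=0\}$, together with the translation of $p_i=0$ into the equality $-v_i=f_i(c)$. Everything else is bookkeeping from Lemmas \ref{delta-dual} and \ref{transpose}.
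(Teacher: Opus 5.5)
Your proof is correct and follows the paper's argument: both reduce to Lemma \ref{transpose}, use $\tau_i=A_\R\cap v_i^\perp$ to sharpen $-v_i\leq f_i(c)$ to equality for the vector and to put $u+cu_i$ in $\tau_i$ for the inverse, and read off the $\Lambda_{(=c)}$ statement from $w(cu_i)=c$. Your checks that $cu_i\in\Lambda_{(\leq c)}$ and that $u\neq0$ (so head and tail are distinct, as the definition of an arrow requires) are small details the paper leaves implicit.
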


\begin{proof}
Let $\alpha=(\head_\alpha,\tail_\alpha)$ be an arrow in $\Lambda_{(\leq c)}^w$ such that $\head_\alpha\in\tau_i$, $\tail_\alpha=cu_i$ and $w(\head_\alpha)\leq c$. From Lemma \ref{transpose}, we have $\vect_\alpha=\head_\alpha-\tail_\alpha\in(\delta_i^w)^\vee\cap\{-v_i\leq f_i(c)\}$. From \eqref{facet}, we have
\[
(\vect_\alpha,-v_i)=(\tail_\alpha,v_i)=c(u_i,v_i)=f_i(c),
\]
and hence, we have $\vect_\alpha\in(\delta_i^w)^\vee\cap\{-v_i=f_i(c)\}$. In particular, if the arrow $\alpha$ is in $\Lambda_{(=c)}^w$, then we have $(\vect_\alpha,w)=0$. Thus, $\vect_\alpha\in(\delta_i^w)^\vee\cap\{-v_i=f_i(c)\}\cap w^\perp$.

Let $u\in(\delta_i^w)^\vee\cap\{-v_i=f_i(c)\}$. Then from Lemma \ref{transpose}, $u+cu_i\in\Lambda_{(\leq c)}$. Then from \eqref{linear-function} (Definition of $f_i$), we have
\[
(u+cu_i,-v_i)=(u,-v_i)+(cu_i,-v_i)=f_i(c)-c(u_i,v_i)=0.
\]
From \eqref{facet}, we have $\tau_i=A_\R\cap\{-v_i=0\}$ and therefore $u+cu_i\in\tau_i$. By the same computation as in \eqref{u+cui,w}, it follows that $w(u+cu_i)\leq c$. Thus, assignment \eqref{area-area} defines a map in the direction opposite to \eqref{area}. It is clear that this map is inverse map of \eqref{area}. Therefore, \eqref{area} is a bijection. 
In particular, if $u\in(\delta_i^w)^\vee\cap\{-v_i=f_i(c)\}\cap w^\perp$, then
\[
(u+cu_i,w)=c(u_i,w)=c.
\]
Hence, $\alpha=(u+cu_i,cu_i)$ is an arrow in $\Lambda_{(=c)}^w$.
\end{proof}
Figures 1 and 2 illustrate the correspondence given in Lemmas \ref{transpose} and \ref{delta-Lambda}.
\begin{figure}[h]
\centering
\begin{minipage}{0.45\textwidth}
\centering
\begin{tikzpicture}
\draw (-0.2,-0.2) node{$O$};
\fill[blue!15, opacity=0.5] (0,0) -- (4,0.5) -- (2,3) -- cycle;
\draw[red,line width=1.5pt] (4,0.5) -- (2,3);
\draw[->] (0,0) node at (6,1){$u_1$}--(6,0.75);
\draw[->] (0,0) node at (2.7,4.8){$u_2$}--(3,4.5);
\draw (1.9,1) node{$\Lambda_{(\leq c)}$};
\draw (2.6,2.9) node{$\Lambda_{(=c)}$};
\draw[orange!90!black, very thick, ->] (4,0.5) node at (2.2,1.5){$\alpha$}--(1.5,2.25);
\fill[orange!90!black] (4,0.5) circle (2pt) node[above right]{$\tail_\alpha$};
\fill[orange!90!black] (1.5,2.25) circle (2pt) node[above left]{$\head_\alpha$};
\end{tikzpicture}
\caption{$\sigma^\vee$}
\end{minipage}
\hfill
\begin{minipage}{0.45\textwidth}
\centering
\begin{tikzpicture}
\draw (4.2,0.3) node{$O$};
\draw[->] (4,0.5) node at (-1.9,0){$-u_1$}--(-2,-0.25);
\draw[->] (4,0.5) node at (0.5,4){$u_2-u_1$}--(1,4.25);
\draw[red,line width=1.5pt] (4,0.5) --(2,3);
\fill[blue!15, opacity=0.5] (0,0) -- (4,0.5) -- (2,3) -- cycle;
\draw (1.9,1) node{$\{-v_1\leq f(c)\}$};
\draw (3.7,2.9) node{$\{-v_1\leq f(c)\}\cap w^\perp$};
\draw[orange!90!black, very thick, ->] (4,0.5) node at (2.2,1.5){$\vect_\alpha$}--(1.5,2.25);
\end{tikzpicture}
\caption{$(\delta_1^w)^\vee$}
\end{minipage}
\end{figure}

\begin{dfn}\label{Crit-delta}
Let
\[
c_{i,w}=c\coloneqq\min\{(u,-v_i)\mid u\in(\delta_i^w)^\vee\cap M\setminus w^\perp\}\in\R_{>0}.
\]
Then, we define sets $\Crit_i^w\subset M$ as
\[
\Crit_i^w\coloneqq\{u\in(\delta_i^w)^\vee\cap M\setminus\{0\}\mid(u,-v_i)<c_{i,w}\}\subset w^\perp.
\]
We also define
\[
\Crit^w\coloneqq\bigcup_{\substack{1\leq i\leq d \\v_i\in\mu}}\Crit_i^w.
\]
\end{dfn}

\begin{prop}\label{Crit-set}
Every element of $\Crit_i^w$ 
is the vector associated to some $w$-critical arrow in $A_\R$. Conversely, every $w$-critical arrow with tail on the ray $\R_{\geq0}u_i$ has the associated vector in $\Crit_i^w$.
\end{prop}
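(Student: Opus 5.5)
The plan is to prove both directions directly from Lemma \ref{transpose}, which identifies $\Lambda^w_{(\leq c)}-cu_i$ with $(\delta_i^w)^\vee\cap\{-v_i\leq f_i(c)\}$. I will also use the minimality in Definition \ref{Crit-delta}. Throughout, $v_i\in\mu$, so $(u_i,w)=1$ and the point $cu_i$ lies in $\Lambda^w_{(=c)}$.

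\emph{Converse direction.} Let $\alpha$ be a $(w,b)$-critical arrow at level $c$ with $\tail_\alpha\in\R_{\geq0}u_i$. Since $(u_i,w)=1$, we get $\tail_\alpha=cu_i$. Since $\head_\alpha\in\Lambda_{(=c)}\subset\Lambda_{(\leq c)}$, Lemma \ref{transpose} gives $\vect_\alpha\in(\delta_i^w)^\vee\cap\{-v_i\leq f_i(c)\}$. Moreover $\vect_\alpha\in M\setminus\{0\}$.

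It remains to show $(\vect_\alpha,-v_i)<c_{i,w}$, and for this it suffices to show $f_i(c)<c_{i,w}$. Suppose instead that some $m\in(\delta_i^w)^\vee\cap M\setminus w^\perp$ satisfies $(m,-v_i)\leq f_i(c)$. Lemma \ref{transpose} then puts $cu_i+m$ in $\Lambda_{(\leq c)}$. Since $-w\in\delta_i^w$, we have $(m,w)\leq0$, and $m\notin w^\perp$ makes this strict, so $w(cu_i+m)<c$. Thus $cu_i+m$ is an $\alpha$-integral point in $\Lambda_{(<c)}$, contradicting condition (2) of Definition \ref{crit-arrow}. Hence $\vect_\alpha\in\Crit_i^w$.

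\emph{Forward direction.} Let $u\in\Crit_i^w$.

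First, $u\in w^\perp$: otherwise $u$ competes in the minimum defining $c_{i,w}$, which forces $(u,-v_i)\geq c_{i,w}$.

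Next, choose $c>0$ with $(u,-v_i)\leq f_i(c)<c_{i,w}$. This is possible because $f_i$ is increasing and $f_i(0)=0$. Choosing $c$ this way avoids a separate treatment of the case $(u,-v_i)=0$, for instance $u$ a multiple of some $u_l$ with $l\in L$.

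Set $\alpha=(cu_i+u,\,cu_i)$. By Lemma \ref{transpose} together with $u\in w^\perp$, both endpoints lie in $\Lambda_{(=c)}$, so condition (1) holds.

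For condition (2), take an $\alpha$-integral point $cu_i+m\in\Lambda_{(<c)}$ with $m\in M$. Lemma \ref{transpose} gives $m\in(\delta_i^w)^\vee$ and $(m,-v_i)\leq f_i(c)<c_{i,w}$, while $w(cu_i+m)<c$ gives $m\notin w^\perp$. This contradicts the definition of $c_{i,w}$.

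\emph{Main obstacle: condition (3).} We need a tie-breaker $b$ for which $cu_i$ is the unique minimizer of $b$ on $\Lambda_{(=c)}$.

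Write points of $\Lambda_{(=c)}$ as $p_iu_i+\sum_L p_lu_l+\sum_R p_ru_r$ with $p_i+\sum_R p_r=c$. The functional $-v_i$ equals $-p_i(u_i,v_i)$ there. It is therefore minimized exactly where $p_i=c$ and $p_r=0$, which still leaves the recession directions $u_l$, $l\in L$, free.

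I will therefore try $b_0=-v_i+\eta\sum_{l\in L}v_l$ with small $\eta>0$. This $b_0$ is strictly positive on each $u_l$, so $cu_i$ becomes the unique minimizer. Because $b_0$ is positive on the recession cone of $\Lambda_{(=c)}$, uniqueness of the minimizer is stable under small perturbations of $b_0$.

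Then I will replace $b_0$ by its orthogonal projection to $w^{(\perp)}$. Since $w$ is constant on $\{w=c\}$, this changes $b_0$ only by a constant on $\Lambda_{(=c)}$, so the minimizer is unchanged. Finally I will perturb slightly so that $w+b$ lands in a chamber of $\Delta^\ast$ whose closure contains $w$, rescaling $b$ if needed.

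The step needing most care is checking that such a perturbation exists. The chambers of $\Delta^\ast$ adjacent to $w$ form an open dense subset of directions in a neighbourhood of $w$, and the set of admissible $b$ is open and invariant under positive scaling, so the two should intersect.
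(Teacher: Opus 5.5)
Your proof is correct, and its core is the same as the paper's. In both directions, Lemma~\ref{transpose} turns the question into the minimality defining $c_{i,w}$: an $\alpha$-integral point below level $c$ is the same thing as some $m\in(\delta_i^w)^\vee\cap M\setminus w^\perp$ with $(m,-v_i)\le f_i(c)$.

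Two differences from the paper are worth noting.
\begin{enumerate}
\item For condition (3), the paper constructs nothing. It imports from the proof of \cite[Lemma 6.2]{CMDY25} the existence of a tie-breaker making $\tail_\alpha$ the unique minimizer, and then argues by contradiction that only condition (2) can fail. Your explicit choice $b_0=-v_i+\eta\sum_{l\in L}v_l$ makes the argument self-contained.
\item The paper obtains the arrow from the bijection of Lemma~\ref{delta-Lambda}, which forces $f_i(c)=(u,-v_i)$. For $0\neq u\in\langle u_l\mid l\in L\rangle_{\R_{\geq0}}\cap M$ this gives $c=0$, which is not allowed. Such $u$ exist and lie in $\Crit_i^w$ whenever $\mu\neq\sigma$. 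Your freedom to take any $c>0$ with $(u,-v_i)\le f_i(c)<c_{i,w}$ covers this case.
\end{enumerate}

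The one step you should write out is the final perturbation. Your ``open dense'' claim holds only among directions entering $\sigma$, and this matters when $\mu\neq\sigma$. The step does go through:
\begin{enumerate}
\item In the basis $v_1,\dots,v_d$, the coordinates of $w$ vanish exactly at the indices $l\in L$. The projection $b_1=b_0-sw$ still has coordinate $\eta>0$ at each such $l$, so $w+tb_1\in\sigma^\circ$ for small $t>0$.
\item Choose $b\in w^{(\perp)}$ near $b_1$; it stays admissible for (3) by your openness argument. Also choose it off the finitely many subspaces $m^\perp\cap w^{(\perp)}$ with $0\neq m\in M_{\leq D}\cap w^\perp$. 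These subspaces are proper, since $m^\perp\supseteq w^{(\perp)}+\R w=N_\R$ would force $m=0$.
\item For small $t$, the segment $\{w+sb\mid 0<s\leq t\}$ lies in $\sigma^\circ$ and meets no wall of $\Delta^\ast$. Walls with $(m,w)\neq0$ are avoided because $s$ is small, and walls with $(m,w)=0$ because $(m,w+sb)=s(m,b)\neq0$.
\item Hence the segment lies in a single chamber whose closure contains $w$, and $tb$ is the required tie-breaker.
\end{enumerate}
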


\begin{proof}
With the notation introduced in Lemma \ref{delta-dual}, the dual cone $A_\R$ of $\sigma$ can be written as
\begin{align*}
A_\R&=\langle v_1^\ast,\ldots,v_d^\ast\rangle_{\R_{\geq0}} \\
&=\langle u_i,u_l,u_r\mid l\in L,r\in R\rangle_{\R_{\geq0}}.
\end{align*}
From Lemma \ref{delta-Lambda}, each $u\in\delta_i^\vee\cap M\setminus\{0\}$ can be expressed as the vector associated to an arrow $\alpha=(\head_\alpha,\tail_\alpha)$ in $A_\R$ such that $\head_\alpha\in\tau_i$, $\tail_\alpha=cu_i$ for some $c\in\R_{>0}$ and $w(\head_\alpha)\leq c$. 
Suppose that there exists $u\in\Crit_i^w$ whose associated arrow $\alpha=(\head_\alpha,\tail_\alpha)$ in $\Lambda_{(=c)}^w$ is not a $w$-critical arrow. It was showed in the proof of \cite[Lemma 6.2]{CMDY25} that there exists a tie-breaker of $w$ satisfying condition (3) in Definition \ref{crit-arrow}. Hence, there exists $P\in A_\R$ such that $P-\tail_\alpha\in M$ and
\begin{equation}\label{taQ}
c=(\head_\alpha,w)=(\tail_\alpha,w)>(P,w).
\end{equation}
Since $P\in A_\R$, we may write
\begin{equation}\label{equation-u-Q}
P=p_iu_i+\sum_{l\in L}p_lu_l+\sum_{r\in R}p_ru_r
\end{equation}
with $p_i,p_l,p_r\in\R_{\geq0}$. 
From Lemma \ref{transpose}, we have $P-\tail_\alpha\in\delta_i^\vee\cap\{-v_i\leq f_i(c)\}\cap M$. From \eqref{equation-u-Q}, 
we see that
\begin{equation*}
(P-\tail_\alpha,-v_i)=(P-cu_i,-v_i)=(c-p_i)(u_i,v_i)\leq c(u_i,v_i).
\end{equation*}
From Lemma \ref{delta-Lambda} and by the choice of $u$, we have
\[
(u,-v_i)=f_i(c)=c(u_i,v_i)<c_{i,w}.
\]
Thus, $(P-\tail_\alpha,-v_i)\leq(u,-v_i)<c_{i,w}$, and by Definition \ref{Crit-delta}, we have $P-\tail_\alpha\in\Crit_i^w\subset w^\perp$. However, this contradicts \eqref{taQ}. Hence, $\alpha$ is a $w$-critical arrow.

Let $\alpha=(\head_\alpha,\tail_\alpha)$ be a $w$-critical arrow in $\Lambda_{(=c)}$ whose tail is contained in the ray $\R_{\geq0}u_i$. By Lemma \ref{transpose}, the associated vector $\vect_\alpha=\head_\alpha-cu_i$ is contained in $(\delta_i^w)^\vee\cap M\cap\{-v_i\leq f_i(c)\}\cap w^\perp\setminus\{0\}$. Suppose that $(\vect_\alpha,-v_i)\geq c_{i,w}$ and let $P$ be a lattice point in $(\delta_i^w)^\vee\cap M\setminus w^\perp$ such that $(P,-v_i)=c_{i,w}$. Then, by Lemma \ref{transpose}, $P+\tail_\alpha\in\Lambda_{(\leq f_i^{-1}(c_{i,w}))}\subset A_\R$. Since $w(P)<0$, we have
\[
w(P+\tail_\alpha)=w(P)+w(\tail_\alpha)<w(\tail_\alpha)=c.
\]
Hence, the arrow $\alpha$ does not satisfy condition (2) in Definition \ref{crit-arrow}, which contradicts the assumption that $\alpha$ is a $w$-critical arrow. Therefore, $(\vect_\alpha,-v_i)<c_{i,w}$, namely $\vect_\alpha\in\Crit_i^w$.
\end{proof}

\begin{cor}\label{Critw-set}
We have
\[
\Crit^w=\{\vect_\alpha\mid\alpha\text{ is a $w$-critical arrow in }A_\R\}.
\]
\end{cor}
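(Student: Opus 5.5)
The plan is to prove the two inclusions separately. Throughout, $\sigma=\langle v_1,\ldots,v_d\rangle_{\R_{\geq0}}$ is simplicial, as in Definition \ref{delta}.

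The inclusion $\Crit^w\subset\{\vect_\alpha\}$ is immediate. Every element of $\Crit^w$ lies in some $\Crit_i^w$ with $v_i\in\mu$, and the first half of Proposition \ref{Crit-set} says it is the vector of a $w$-critical arrow in $A_\R$.

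For the reverse inclusion, take a $(w,b)$-critical arrow $\alpha$ with level $c>0$. By the second half of Proposition \ref{Crit-set}, it suffices to show that $\tail_\alpha$ lies on a ray $\R_{\geq0}u_i$ for some $i$ with $v_i\in\mu$. I will get this from the polyhedral structure of $\Lambda_{(=c)}=A_\R\cap\{w=c\}$.

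First, the vertices of $\Lambda_{(=c)}$ are determined. Since $A_\R=\langle v_1^\ast,\ldots,v_d^\ast\rangle_{\R_{\geq0}}$ is a simplicial, strongly convex cone, $\Lambda_{(=c)}$ is a pointed polyhedron. Its vertices are the intersection points of the rays $\R_{\geq0}v_j^\ast$ with the hyperplane $\{w=c\}$, and these exist exactly for those $j$ with $(v_j^\ast,w)>0$. Writing $w=\sum_j a_jv_j$ with $a_j\geq0$, we have $(v_j^\ast,w)=a_j$. Because $w\in\mu^\circ$, $a_j>0$ exactly when $v_j\in\mu$. Hence the vertices are precisely the points $cu_j$ with $v_j\in\mu$, and the recession cone of $\Lambda_{(=c)}$ is $\mu^\ast=A_\R\cap w^\perp$.

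Next, condition (3) of Definition \ref{crit-arrow} says the linear function $b$ restricted to $\Lambda_{(=c)}$ attains its minimum at the unique point $\tail_\alpha$. A linear function on a pointed polyhedron that attains its minimum attains it on a face, and that face contains a vertex. Since the minimizer is unique, that face is the single point $\tail_\alpha$, which must therefore be a vertex. So $\tail_\alpha=cu_i$ with $v_i\in\mu$, and Proposition \ref{Crit-set} gives $\vect_\alpha\in\Crit_i^w\subset\Crit^w$.

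The main point to get right is this vertex identification, in particular checking that $(v_j^\ast,w)>0$ corresponds exactly to $v_j\in\mu$. The rest is direct citation of Proposition \ref{Crit-set}.
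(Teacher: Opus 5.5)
Your proof is correct and follows the paper's argument. The forward inclusion is the first half of Proposition \ref{Crit-set}; for the reverse inclusion you use condition (3) to put $\tail_\alpha$ on a ray of $A_\R$ and then apply the second half of Proposition \ref{Crit-set}. Your explicit check is the one addition: the unique minimizer of $b$ on the pointed polyhedron $\Lambda_{(=c)}$ is a vertex $cu_i$, and $v_i\in\mu$ because $(v_j^\ast,w)>0$ exactly when $v_j\in\mu$. This fills in a step that the paper states in a single sentence.
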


\begin{proof}
Let $\alpha=(\head_\alpha,\tail_\alpha)$ be a $(w,b)$-critical arrow. By the definition of a $(w,b)$-critical arrow (Definition \ref{crit-arrow}), $\tail_\alpha$ is contained in one of the rays of $A_\R$. Thus, all $w$-critical arrows can be obtained by the construction given in the Proposition \ref{Crit-set}. On the other hand, by Definition \ref{Crit-delta}, we have
\[
\Crit^w=\bigcup_{\substack{1\leq i\leq d \\v_i\in\mu}}\Crit_i^w.
\]
Therefore, the reverse inclusion follows again from the Proposition \ref{Crit-set}.
\end{proof}

The following proposition is used to compute $\Crit^w$.
\begin{prop}\label{Crit-Hilb}
We keep the notation of the above proposition. Then at least one element of $u\in\delta_i^\vee\cap M\setminus w^\perp$ such that $(u,-v_i)=c_{i,w}$ belongs to the Hilbert basis of $\delta_i^\vee\cap M$.
\end{prop}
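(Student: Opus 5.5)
Let $S\coloneqq\delta_i^\vee\cap M$. The plan is to take a minimizer $u$ of $(\cdot,-v_i)$ on $S\setminus w^\perp$, decompose it into Hilbert basis elements of $S$, and show that one of the summands is itself a minimizer lying outside $w^\perp$. Two sign facts will be used throughout, both from Lemma \ref{delta-dual}. On $\delta_i^\vee$ both linear functions $(\cdot,w)$ and $(\cdot,-v_i)$ take values $\le 0$ and $\ge 0$ respectively: the generators $-u_i$ and $u_r-u_i$ pair with $w$ to $-1$ and $0$, and $u_l$ pairs with $w$ to $0$, so $(\cdot,w)\le0$; and $-v_i$ pairs with the generators to $(u_i,v_i)>0$, $0$, $0$, so $(\cdot,-v_i)\ge0$. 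In particular $S\setminus w^\perp=\{u\in S\mid (u,w)<0\}$, and this set is nonempty because some positive multiple of $-u_i$ lies in $M$. The minimum $c_{i,w}$ exists and is positive, as will be justified below.

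Given a minimizer $u$, write $u=h_1+\cdots+h_k$ with each $h_j$ in the Hilbert basis of $S$; this is possible since $S$ is generated by its Hilbert basis. Because $(u,w)<0$ and every $(h_j,w)\le 0$, some $h_{j_0}$ satisfies $(h_{j_0},w)<0$, so $h_{j_0}\in S\setminus w^\perp$. By minimality, $(h_{j_0},-v_i)\ge c_{i,w}=(u,-v_i)=\sum_j (h_j,-v_i)$. Since all terms are nonnegative, this forces $(h_{j_0},-v_i)=c_{i,w}$ and $(h_j,-v_i)=0$ for $j\neq j_0$. Hence $h_{j_0}$ is the desired Hilbert basis element.

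The only point requiring care, and the one I expect to be the main obstacle, is that $c_{i,w}$ is actually attained and positive. Positivity comes first. If $(u,-v_i)=0$ for some $u\in\delta_i^\vee$, then writing $u$ in the generators of Lemma \ref{delta-dual} shows that the coefficient of $-u_i$ vanishes. Then $u$ lies in the span of the $u_l$ and $u_r-u_i$, all of which lie in $w^\perp$. So $(u,-v_i)>0$ on $S\setminus w^\perp$. Attainment follows because $(\cdot,-v_i)$ takes values in a discrete set on $M$, namely the multiples of $1/$ the index, as $v_i\in N$. So the infimum of a nonempty set of positive values in a discrete subgroup of $\R$ is a minimum. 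With this in hand, the decomposition argument above completes the proof.
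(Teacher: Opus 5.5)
Your proof is correct and is essentially the paper's argument. The paper splits a non-Hilbert-basis minimizer as $u=u_1+(u-u_1)$, keeps a summand lying outside $w^\perp$, uses minimality together with $(\cdot,-v_i)\ge 0$ on $\delta_i^\vee$ to see that this summand is again a minimizer, and descends via the descending chain condition; you do the same thing in one step by decomposing $u$ fully into Hilbert basis elements. There is one harmless slip: $(u_r-u_i,-v_i)=(u_i,v_i)>0$, not $0$, which leaves nonnegativity and your positivity argument intact, since $(u,-v_i)=0$ forces the coefficients of $-u_i$ and of every $u_r-u_i$ to vanish.
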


\begin{proof}
Let $u\in\delta_i^\vee\cap M\setminus w^\perp$ such that $(u,-v_i)=c_{i,w}$. Suppose that $u$ is not an element of the Hilbert basis of $\delta_i^\vee\cap M$. Then there exists $u_1\in \delta_i^\vee\cap M\setminus\{0\}$ such that $u_1<u$ with respect to the partial order defined in Section \ref{toric}. If $u_1,u-u_1\in w^\perp$, then $u=u_1+(u-u_1)\in w^\perp$, which contradicts the choice of $u$. Hence, replacing $u_1$ with $u-u_1$ if necessary, we may assume that $u_1\in\delta_i^\vee\cap M\setminus w^\perp$, $u_1<u$. 
If $u-u_1\notin(-v_i)^\perp$, then we have $(u-u_1,-v_i)>0$, and hence $(u_1,-v_i)<(u,-v_i)=c_{i,w}$. Therefore, by Definition \ref{Crit-delta}, we have $u_1\in\Crit_i^w\subset w^\perp$. However, this contradicts the choice of $u_1$. Thus, $u-u_1\in(-v_i)^\perp$, and hence
\[
(u,-v_i)=(u_1,-v_i)+(u-u_1,-v_i)=(u_1,-v_i).
\]
Hence, $u_1$ satisfies the same assumption as $u$. If $u_1$ is not an element of the Hilbert basis of $\delta_i^\vee\cap M$, then the above argument shows that there exists $u_2\in\delta_i^\vee\cap M\setminus w^\perp$ such that $u_2<u_1$. Repeating this operation, we obtain a strictly descending sequence $u_1>u_2>\cdots$ in $\delta_i^\vee\cap M\setminus w^\perp$ as long as $u_n$ is not an element of the Hilbert basis of $\delta_i^\vee\cap M$. By construction, each $u_n$ satisfies the same assumption as $u$. Since this partial order satisfies the descending chain condition, the process must terminate. Thus, there exists $n\in\Z_{>0}$ such that $u_n$ is an element of the Hilbert basis of $\delta_i^\vee\cap M$.
\end{proof}

For $D'\geq D$, we define sets $\Crit_{i,D'}^w\subset M$ as
\[
\Crit_{i,D'}^w\coloneqq\{u\in\Crit_i^w\mid|u|\leq D'\}\subset w^\perp.
\]
We also define
\[
\Crit_{D'}^w\coloneqq\bigcup_{\substack{1\leq i\leq d \\v_i\in\mu}}\Crit_{i,D'}^w.
\]
\begin{prop}
For $D'\geq D$, the set $\Crit_{D'}^w$ is a generating set of $V_w$.
\end{prop}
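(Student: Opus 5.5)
The plan is to upgrade Proposition \ref{Crit-vect}, which only gives the statement modulo $\mu^\perp$, into an honest statement about $V_w$. Corollary \ref{Critw-set} says that $\Crit^w$ is exactly the set of vectors of $w$-critical arrows, so $\Crit^w$ spans $V_w$. Since $\Crit_{D'}^w\subset\Crit^w$, it suffices to treat $D'=D$ and show that every $\vect_\alpha$, for $\alpha$ a $w$-critical arrow, is a sum of elements of $\Crit_D^w$.

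\textbf{Step 1: re-run the reduction in Proposition \ref{Crit-vect}, keeping track of the discarded vectors.} Fix a $(w,b)$-critical arrow $\alpha$ at level $c$. That proof peels off Hilbert basis elements $a^{(1)},\dots,a^{(k)}$ of $A$ lying in $w^\perp$. It produces a $w$-critical arrow $\beta=(\head_\beta,\tail_\alpha)$ with $\head_\beta\notin\Theta$ and $\tail_\alpha\notin\Theta$, so that
\[
\vect_\alpha=\vect_\beta+a^{(1)}+\cdots+a^{(k)}
\]
and $|\vect_\beta|\le D$. (If at some stage the peeled element equals the whole vector, then $\vect_\alpha$ itself is a Hilbert basis element, and that case is handled by Steps 2 and 3.) It therefore remains to show that each $a=a^{(j)}$ is the vector of a $w$-critical arrow and satisfies $|a|\le D$.

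\textbf{Step 2: each peeled element is the vector of a critical arrow.} Consider the arrow $\gamma=(\tail_\alpha+a,\tail_\alpha)$. The head lies in $A_\R$ because $a\in A$, and it lies at level $c$ because $a\in w^\perp$. The $\gamma$-integral points are the same as the $\alpha$-integral points, since both are exactly the points $\tail_\alpha$-integral. Hence condition (2) of Definition \ref{crit-arrow} is inherited from $\alpha$. Condition (3) concerns only the tail and the level $c$, so it holds with the same tie-breaker $b$. Thus $\gamma$ is $(w,b)$-critical, and $a\in\Crit^w$ by Corollary \ref{Critw-set}.

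\textbf{Step 3: the length bound $|a|\le D$.} I claim $(1-\varepsilon)a\notin\Theta$ for all sufficiently small $\varepsilon>0$. Suppose not. Then along a sequence $\varepsilon\to0$ with a fixed index $j$ we have $(1-\varepsilon)a-a_j\in A_\R$. Passing to the limit gives $a-a_j\in A_\R\cap M=A$. Irreducibility of $a$ in the Hilbert basis forces $a=a_j$. But then $-\varepsilon a\in A_\R$, which is impossible for $a\neq0$ since $A_\R$ is strongly convex. Also $0\notin\Theta$. Hence
\[
(1-\varepsilon)|a|\le D \quad\text{for all small }\varepsilon>0,
\]
and so $|a|\le D$.

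Combining Steps 1--3, $\vect_\alpha\in\langle\Crit_D^w\rangle$, and therefore $\Crit_{D'}^w$ generates $V_w$ for every $D'\ge D$. The only delicate point is Step 3: one must check that the limit argument applies with the fixed index $j$, which holds because there are finitely many $a_j$, and must make sure that the edge case where $\vect_\alpha$ is itself a peeled Hilbert basis element is covered by Steps 2 and 3.
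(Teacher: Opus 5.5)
Your proof is correct, and it proves the statement as written, which the paper's one-line proof does not quite do. The paper combines Proposition~\ref{Crit-vect} with Corollary~\ref{Critw-set}, but taken literally that gives only the statement modulo $\mu^\perp$. In the proof of Proposition~\ref{Crit-vect}, the discarded difference $\head_\alpha-\head_\beta$ is only observed to lie in $A_\R\cap w^\perp\subset\mu^\perp$. So one gets $V_w\subset\langle\Crit_{D'}^w\rangle_\R+\mu^\perp$, i.e.\ the images of $\Crit_{D'}^w$ span $\overline{V_w}$.

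That weaker version is all the algorithm needs. Step (5) only uses $\dim V_w-\dim(\mu^\perp\cap V_w)=\dim\overline{V_w}$, and this does not change if $V_w$ is replaced by the span of $\Crit_{D'}^w$.

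Your extra bookkeeping closes the gap to the literal claim, in two parts:
\begin{enumerate}
\item Each peeled Hilbert basis element $a\in A\cap w^\perp$ is the vector of the $(w,b)$-critical arrow $(\tail_\alpha+a,\tail_\alpha)$, since this arrow has the same integrality class, tail and level as $\alpha$.
\item $|a|\le D$, because $0$ and $(1-\varepsilon)a$ both lie in $A_\R\setminus\Theta$. This also supplies the justification for the bound $|a|\le D$ that the paper's proof of Proposition~\ref{Crit-vect} uses without comment in the case $\head=\tail_\alpha$.
\end{enumerate}

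One wording slip: in your edge case, it is the vector of the last intermediate arrow with tail $\tail_\alpha$ that equals a Hilbert basis element, not necessarily $\vect_\alpha$ itself. Steps 2 and 3 cover it in the same way.
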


\begin{proof}
The assertion follows from Proposition \ref{Crit-vect} and Corollary \ref{Critw-set}.
\end{proof}

\subsection{Algorithm}\label{pseudo_code}

Based on results in the previous section, we can construct an algorithm for finding $w$-critical arrows with bounded lengths and computing the dimension of the minimal cone $\tau\in\Delta$ containing $w$.

\textbf{Input}: Generators $v_1,\ldots,v_d \in \Z^d=N$ of the simplicial cone $\sigma $ and $w\in B \setminus\{0\}$.

\textbf{Output}: The dimension of the minimal cone $\tau\in\Delta$ containing $w$.

\textbf{Procedure}: 
\begin{enumerate}
\item Compute a dual basis $v_1^\ast,\ldots,v_d^\ast$ of $v_1,\ldots,v_d$.
\item Compute $D'\coloneqq\lceil\max\{|u-u'|\mid u,u'\in S\}\rceil\geq D$, where $S=\{\sum a_iv_i^\ast\mid a_i\in\{0,1\}\}$ and $\lceil x\rceil$ denotes the smallest integer greater than or equal to $x$. (The elements of $S$ are the vertices of the parallelotope spanned $v_1^\ast,\ldots,v_d^\ast$ and $D'$ is the ceiling of its diameter. We have $D'\geq D$.)
\item Let $\mu$ be the minimal face of $\sigma$ containing $w$. For each $i$ with $v_i\in\mu$, compute a set $\Crit_{i,D'}^w$ of vectors as follows:
\begin{enumerate}
\item Let $\delta_i=\langle v_1,\ldots,v_{i-1},-w,v_{i+1},\ldots,v_d\rangle_{\R_{\geq0}}$.
\item Compute the Hilbert basis $H_i^w$ of the dual cone $\delta_i^\vee\cap\Z^d$.
\item Compute $c_{i,w}=\min\{(u,-v_i)\mid u\in H_i^w\setminus w^\perp\}$.
\item Compute $\Crit_{i,D'}^w=\{u\in\delta_i^\vee\cap M\setminus\{0\}\mid|u|\leq D',(u,-v_i)<c_{i,w}\}$.
\end{enumerate}
\item Put the set $\Crit_{D'}^w$ to be the union of all the $\Crit_{i,D'}^w$ computed in (3). (The set $\Crit_{D'}^w$ is a generating set of $V_w$).
\item Compute the dimension of the minimal cone $\tau$ containing $w$ by
\begin{equation*}
\dim\tau=\dim\mu-(\dim V_w-\dim(\mu^\perp\cap V_w)).
\end{equation*}
\end{enumerate}

\begin{rem}
The above algorithm enumerates all $w$-critical arrows in $A_\R$ whose length of associated vector is less than or equal to $D'$. Moreover, by a slight modification, the algorithm can be made to output a basis of $\langle\tau\rangle_\R$.
\end{rem}

\begin{exam}\label{basic-exam}
Let $\sigma$ be a cone generated by $v_1=(1,0,0)$, $v_2=(0,1,0)$, $v_3=(1,1,2)$. Note that $v_1^\ast=(2,0,-1)$, $v_2^\ast=(0,2,-1)$, $v_3^\ast=(0,0,1)$ and
\[
D'=\left\lceil|((2,0,-1)+(0,2,-1))-(0,0,1)|\right\rceil=\left\lceil\sqrt{17}\right\rceil=5.
\]
Set $w=(1,1,1)\in\sigma$, which is the unique element of the Hilbert basis of $B$ corresponding to the essential divisor over $X_\sigma$. Since
\[
w=\frac{1}{2}v_1+\frac{1}{2}v_2+\frac{1}{2}v_3,
\]
we have $\mu=\sigma$. Then for each $1\leq i\leq 3$, we have
\begin{align*}
\Crit_1^w=\{(-1,0,1),(-1,1,0)\}, \\
\Crit_2^w=\{(0,-1,1),(1,-1,0)\}, \\
\Crit_3^w=\{(0,1,-1),(1,0,-1)\}.
\end{align*}
Thus, $V_w=\langle\pm(1,-1,0),\pm(1,0,-1),\pm(0,1,-1)\rangle_\R$. Since $(1,0,-1)=(1,-1,0)+(0,1,-1)$, we have $\dim V_w=2$ and
\[
\dim\tau=\dim\mu-\dim V_w=3-2=1.
\]
Next, set $w=(1,2,2)\in\sigma$, which is not an element of the Hilbert basis. Since $w=v_2+v_3$, we have $\mu=\langle v_2,v_3\rangle_{\R_{\geq0}}$. Then for each $2\leq i\leq3$, we have
\[
\Crit_2^w=\Crit_3^w=\{(2,0,-1),(4,0,-2)\}.
\]
Thus, $V_w=\langle(2,0,-1),(4,0,-2)\rangle_\R$. Since $\mu^\perp=\langle(2,0,-1)\rangle_\R$, we have $\mu^\perp\cap V_w=V_w$. Hence, we obtain
\[
\dim\tau=\dim\mu-(\dim V_w-\dim(\mu^\perp\cap V_w))=2-(1-1)=2.
\]
\end{exam}

\section{Essential divisors of three-dimensional $\Q$-factorial toric singularities}\label{dim-3-4}

In this section, we study the relation between F-blowups and essential divisors for $\Q$-factorial toric singularities.

\subsection{Essential divisors and discrepancies}

We briefly review the notion of essential divisors.
\begin{dfn}[\cite{IK03,PS15}]
We say that a divisor over $X$ is \emph{essential} if, in any resolution, whose center is an irreducible component of the exceptional locus.
\end{dfn}

\begin{dfn}[\cite{BGS95}]
We say that an exceptional divisor over $X$ is \emph{BGS essential} if it appears on every resolution of $X$ as a prime divisor.
\end{dfn}

\begin{rem}[{\cite[Proposition A.8]{CMDY25}}]
If $X$ is a $\Q$-factorial toric variety, then a toric divisor $E$ over $X$ is essential if and only if it is BGS essential.
\end{rem}

\begin{prop}[{\cite[Theorem 1.10]{BGS95}}]
Let $\rho\subset\sigma$ be a ray and let $E_\rho$ be the corresponding exceptional 
divisor over $X$. Then $E_\rho$ is BGS essential if and only if $\rho$ is spanned by an element of the Hilbert basis of $B$.
\end{prop}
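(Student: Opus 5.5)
We prove the two implications of the statement separately, using the standard description of resolutions of toric singularities by regular subdivisions of $\sigma$. Throughout, $E_\rho$ is exceptional, so $\rho$ is not a ray of $\sigma$. Let $w$ be the primitive generator of $\rho$. Recall that a toric resolution $Y\to X$ corresponds to a fan $\Sigma$ subdividing $\sigma$ all of whose cones are regular. By equivariant resolution, every resolution is dominated by a toric one, so for the "appears on every resolution" condition it suffices to test toric resolutions. This reduction is standard, and the paper treats it as known via the cited remark; I would take it for granted.

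\emph{If $w\in\Hilb_N(\sigma)$, then $E_\rho$ appears on every toric resolution.} Let $\Sigma$ be a regular subdivision of $\sigma$. Then $w$ lies in some cone $\gamma\in\Sigma$, which is regular, with primitive generators $n_1,\dots,n_k$ forming part of a basis of $N$. Hence $w=\sum c_jn_j$ with $c_j\in\Z_{\ge0}$. Each $n_j$ lies in $B$, and since $w$ is indecomposable in $B$, exactly one $c_j$ equals $1$ and the others vanish. So $w=n_j$ spans a ray of $\Sigma$, and $E_\rho$ appears on $Y$.

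\emph{If $w\notin\Hilb_N(\sigma)$, we construct a toric resolution on which $E_\rho$ does not appear.} Here I would use the standard fact that there is a regular subdivision of $\sigma$ all of whose rays are generated by elements of $\Hilb_N(\sigma)$. To build one, first form a triangulation using only Hilbert basis elements, for instance a pulling or regular triangulation of the polytope $\operatorname{conv}(\Hilb_N(\sigma))$ coned from $0$. Then refine non-unimodular simplices by inserting Hilbert basis elements of those subcones; these remain in $\Hilb_N(\sigma)$ whenever they lie in $\Hilb_N$ of the subcone and are indecomposable in $B$. Since a primitive non-Hilbert element $w$ is never a ray of such a fan, $E_\rho$ does not appear.

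The main obstacle is this second direction. The existence of regular subdivisions using only Hilbert basis rays is true in dimension $\le 3$ (Sebő) but is known to \emph{fail} in general: the Bruns–Gubeladze counterexamples show that unimodular covers or triangulations by Hilbert basis elements need not exist in high dimension. So the plan must instead argue directly. Given any toric resolution $\Sigma$ in which $w$ spans a ray, write $w=w_1+w_2$ with $w_i\in B\setminus\{0\}$ and perform a local modification that removes $w$. Concretely, take a star subdivision of the star of $\rho$ at a suitable point, or note that $w$ lies in the relative interior of the cone spanned by $w_1,w_2$. Then replace the fan by one refining $\Sigma$ with $\rho$ removed, for example via the weak factorization or the "reverse star subdivision" when $w=w_1+w_2$ with $w_1,w_2$ spanning a regular cone in the modified fan.

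I expect this to be the crux. Following BGS, I would first resolve so that $w_1,w_2$ are rays adjacent to $\rho$ with $\rho$ obtained as the star subdivision of the regular cone $\langle w_1,w_2,\dots\rangle$ at $w_1+w_2$. Undoing that blowup then gives a resolution without $E_\rho$. Reaching such a configuration is the delicate step, and it is exactly the content of \cite[Theorem 1.10]{BGS95}, which I would follow.
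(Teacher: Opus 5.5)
The paper gives no proof of this proposition and only cites \cite[Theorem~1.10]{BGS95}, so your proposal has to stand on its own. It has a genuine gap in each direction.

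\emph{Non-Hilbert basis $\Rightarrow$ not BGS essential.} You never actually prove this direction. After (correctly) discarding Hilbert basis resolutions, you defer the ``delicate step'' to \cite[Theorem~1.10]{BGS95}, which is the statement itself. Your plan of undoing a star subdivision inside a resolution where $\rho$ is already a ray also presupposes a local configuration that an arbitrary resolution need not have.

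The missing observation is that you only need \emph{one} resolution avoiding $\rho$, and it is easy to build:
\begin{itemize}
\item Write $w=w_1+w_2$ with $w_1,w_2\in B\setminus\{0\}$. They are linearly independent because $w$ is primitive. So $w$ lies in the relative interior of the $2$-cone $\langle w_1,w_2\rangle_{\R_{\ge0}}$ and is reducible in its monoid of lattice points.
\item The rays of the minimal regular subdivision of a $2$-dimensional cone are spanned by its Hilbert basis. Hence the primitive vector $w$ lies in $\gamma^\circ$ for a $2$-cone $\gamma=\langle n_1,n_2\rangle_{\R_{\ge0}}$ with $\{n_1,n_2\}$ a basis of the saturated lattice $N\cap\langle w_1,w_2\rangle_\R$. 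Therefore $\gamma$ is regular in $N$.
\item Star subdividing $\sigma$ at $n_1$ and then at $n_2$ gives a fan containing $\gamma$: after the first step, $n_2$ lies in a cone having $\R_{\ge0}n_1$ as a face.
\item \cite[Theorem~11.1.9]{CLS11} gives a smooth refinement that keeps every smooth cone, in particular $\gamma$. Since $w\in\gamma^\circ$, $\rho$ is not a ray of this resolution.
\end{itemize}

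\emph{Hilbert basis $\Rightarrow$ BGS essential.} Your argument for toric resolutions is correct, but BGS essentiality quantifies over all resolutions, and your reduction fails twice:
\begin{itemize}
\item It is false that every resolution is dominated by a toric one. Blow up a toric resolution at a point of an exceptional divisor that is not torus-fixed. A toric morphism has isomorphic fibres along each orbit, so no toric variety dominates the result.
\item The logic runs the wrong way anyway. If $Y'\to Y$ with $Y'$ toric, knowing $E_w$ is a divisor on $Y'$ says nothing about $Y$, since $Y'\to Y$ may contract it.
\end{itemize}

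A direct valuative argument works for any resolution $f\colon Y\to X$:
\begin{itemize}
\item Let $v_w=\operatorname{ord}_{E_w}$, so $v_w(\sum c_mx^m)=\min\{(m,w)\mid c_m\neq0\}$. Suppose the center $Z$ of $E_w$ on $Y$ has codimension $\ge2$, with generic point $\eta$.
\item Choose $m\in A$ with $(m,w)>0$. Then $f^*x^m$ vanishes at $\eta$, so $Z\subset F$ for some prime component $F$ of $\operatorname{div}(f^*x^m)$.
\item Define $w_F\in B\setminus\{0\}$ by $(m',w_F)=\operatorname{ord}_F(x^{m'})$.
\item Let $t$ be a local equation of $F$ at $\eta$. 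Every nonzero $g\in k[A]$ factors as $t^{\operatorname{ord}_F(g)}g'$ with $g'\in\mathcal{O}_{Y,\eta}$. Since $v_w$ is $\Z$-valued and centered at $\eta$, $v_w(t)\ge1$, so $v_w(g)\ge\operatorname{ord}_F(g)$.
\item On monomials this gives $w-w_F\in B$, so irreducibility of $w$ forces $w=w_F$.
\item Then also $\operatorname{ord}_F(g)\ge\min\{(m',w)\mid c_{m'}\neq0\}=v_w(g)$. Hence $\operatorname{ord}_F=v_w$, i.e.\ $E_w=F$, contradicting $\operatorname{codim}Z\ge2$.
\end{itemize}
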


\begin{dfn}
Let $X$ be a normal $\Q$-Gorenstein variety. The \emph{discrepancy} $a(E)$ of a divisor $E$ over $X$ is the coefficient of $E$ in the relative canonical divisor $K_{Y/X}$ associated to a proper birational morphism $Y\to X$ where $E$ appears on $Y$ as a prime divisor. In our setting, each toric divisor $E$ corresponds to a ray $\R_{\geq0}w$ in the fan $\Sigma$ corresponding to $X$. We denote the discrepancy $a(E)$ by $a(w)$.
\end{dfn}

\begin{dfn}
Let $X$ be a normal $\Q$-Gorenstein variety. We say that $X$ has \emph{terminal} (resp. \emph{canonical}) singularities if there exists a proper birational morphism $f\colon Y\to X$ with $Y$ smooth, such that the discrepancy of every exceptional prime divisor of $f$ is positive (resp. non-negative).
\end{dfn}

\begin{rem}\label{ess-disc}
There are close relations between essential divisors and discrepancies. In particular, a toric divisor over $X$ with minimal discrepancy is essential. Moreover, if $X$ is canonical and a toric divisor $E$ over $X$ has discrepancy less than one, then $E$ is essential.
\end{rem}

\subsection{The sufficient conditions of existence of critical arrows}

\begin{prop}[{\cite[Theorem 6.3]{CMDY25}}]
Let $X$ be an affine toric variety and let $w\in B\setminus\{0\}$. If $(\Lambda_{(=1)}^w)^\circ\cap M\neq\emptyset$, then the ray $\R_{\geq0}w$ belongs to $\Delta$.
\end{prop}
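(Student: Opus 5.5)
By Theorem \ref{arrow-dim} and Corollary \ref{min}, the ray $\R_{\geq0}w$ belongs to $\Delta$ exactly when the minimal cone $\tau\in\Delta$ containing $w$ is one-dimensional. Since $\langle\tau\rangle_\R=(\mu^\perp+V_w)^\perp$ and always $\mu^\perp+V_w\subset w^\perp$, the plan is to prove
\[
\mu^\perp+V_w=w^\perp .
\]
In other words, I want to produce enough $w$-critical arrows at a single level that their vectors, together with $\mu^\perp=\langle\mu^\ast\rangle_\R$, span $w^\perp$. The natural level is $c=1$, because the hypothesis gives a lattice point $m\in(\Lambda^w_{(=1)})^\circ\cap M$.

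\textbf{Step 1 (condition (2)).} Since $w\in N$, the function $w$ is integer-valued on $M$. Hence for any point $t$ with $w(t)=1$, every point of $t+M$ has $w$-value in $1+\Z$. A point of $t+M$ lying in $\Lambda_{(<1)}$ must therefore have value $\leq 0$, and so lies in $A_\R\cap w^\perp=\mu^\ast$. Condition (2) of Definition \ref{crit-arrow} for an arrow $\alpha$ at level $1$ thus reduces to the single requirement
\[
(\tail_\alpha+M)\cap\mu^\ast=\emptyset .
\]

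\textbf{Step 2 (condition (3)).} Let $b$ be a tie-breaker of $w$; as recalled in the proof of Proposition \ref{Crit-set}, one exists satisfying condition (3). The point $t$ where $b$ attains its minimum on $\Lambda_{(=1)}$ is a vertex, namely $t=u_i$ for some $v_i\in\mu$. I would then take as heads the points of
\[
(t+M)\cap\Lambda_{(=1)}=t'+(M\cap w^\perp)
\]
for a suitable $t'$. Every such arrow satisfies conditions (1) and (3), and, once Step 1 holds, also condition (2).

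\textbf{Step 3 (spanning).} If the coset $t'+(M\cap w^\perp)$ meets the interior of $\Lambda_{(=1)}$ in a set that is not contained in an affine hyperplane, then the differences of heads span $w^\perp$. These differences lie in $V_w$, since each is $\vect_\alpha-\vect_{\alpha'}$ for two critical arrows with the same tail. The hypothesis $m\in(\Lambda_{(=1)})^\circ\cap M$ is what should supply this. In the best case $t\in m+M$, so the coset is $m+(M\cap w^\perp)$, which contains $m$ and meets $\Lambda_{(=1)}$ around $m$. I would pair this with Proposition \ref{Crit-set}, using it to read off critical vectors in $\Crit_i^w$ via the description of Lemma \ref{delta-Lambda}. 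Concretely, the vectors $m+y-u_i$ with $y$ ranging over lattice vectors of $w^\perp$ should produce $d-1$ independent classes modulo $\mu^\perp$.

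\textbf{Main obstacle.} I expect the main difficulty to be Step 1 when the tail vertex $u_i$ is itself congruent modulo $M$ to a point of $\mu^\ast$, for instance when $u_i\in M$ and $0\in\mu^\ast$. In that case level $1$ with that tail fails condition (2). My first attempt would be to exploit the freedom in choosing the tie-breaker, and hence the vertex $u_i$ with $v_i\in\mu$, to avoid this situation. Failing that, I would instead use the level $c=c_{i,w}$ of Definition \ref{Crit-delta}. There, Proposition \ref{Crit-set} guarantees criticality automatically, and I would use the interior lattice point $m$ to show that $\Crit_i^w$ still spans $w^\perp$ modulo $\mu^\perp$.
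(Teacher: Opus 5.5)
Your reduction is fine: by Proposition~\ref{min-cone-inclusion} and $\tau\subseteq\mu$, it suffices to show that no $u\in\langle\mu\rangle_\R\setminus\R w$ is orthogonal to $V_w$. Step~1 is also correct. The level-$1$ strategy, however, cannot work.

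\textbf{The level-1 plan fails.} Since $0\in\mu^\ast$ always, Step~1 shows that a level-$1$ arrow with tail $t$ is never critical when $t\in M$. Your ``best case'' $t\in m+M$ is exactly $t\in M$, because $m\in M$. So the situation you flag as the main obstacle is not exceptional; it is your best case.

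When $t\notin M$, the coset $t+(M\cap w^\perp)$ does not contain $m$, and nothing in Step~3 relates it to $m$. It can meet $\Lambda_{(=1)}$ only in $t$. Take $d=2$, $M=\Z^2$, $A_\R$ spanned by $(3,1),(3,-1)$, and $w=(1,0)$. Then:
\begin{itemize}
\item $m=(1,0)$ is an interior lattice point of $\Lambda_{(=1)}=\{1\}\times[-\tfrac{1}{3},\tfrac{1}{3}]$;
\item the vertices $t=(1,\pm\tfrac{1}{3})$ satisfy $(t+\Z(0,1))\cap\Lambda_{(=1)}=\{t\}$;
\item for $c<1$, $\Lambda_{(=c)}$ is too short to contain any integral arrow.
\end{itemize}
The critical arrows with tail on $\R_{\geq0}(3,1)$ occur only at levels $c\in[\tfrac{3}{2},2)$, with vector $(0,-1)$. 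So your fallback (use $c_{i,w}$ and $m$ to show that $\Crit_i^w$ spans) is the entire content of the statement, and you give no mechanism for it. In addition, $u_i$, $\delta_i^w$ and Proposition~\ref{Crit-set} are only set up for simplicial $\sigma$, while the statement concerns arbitrary affine $X$.

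\textbf{The missing idea.} Treat one direction $u$ at a time, and go up to the level where condition (2) first fails.
\begin{itemize}
\item Given $u\in\langle\mu\rangle_\R\setminus\R w$, replace it by $u-u(m)w$. This has the same values on $w^\perp$ and satisfies $u(m)=0$.
\item The function $u$ vanishes on $\mu^\ast$, is nonconstant on $\Lambda_{(=1)}$, and $m$ lies in the relative interior. Hence $u$ attains its maximum on $\Lambda_{(=1)}$ at a vertex $t_1$ with $u(t_1)>0$, and $u(p)\le w(p)\,u(t_1)$ for all $p\in A_\R$.
\item Let $\ell$ be the least $c>0$ with $(ct_1+M)\cap\Lambda_{(<c)}\neq\emptyset$. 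It is at most $w(\hat t)$, where $\hat t$ is the primitive generator of $\R_{\geq0}t_1$. Pick $p\in A_\R$ with $\ell t_1-p\in M$, so $w(p)=\ell-e$ with $e\in\Z_{\geq1}$.
\item For small $\eta>0$, take the arrow with tail $(\ell-\eta)t_1$ and head $p+em-\eta t_1$. The head lies in $A_\R$ because $m\in A_\R^\circ$. The arrow satisfies (1). It satisfies (2) by minimality of $\ell$. It satisfies (3) for a tie-breaker selecting that vertex, as in the proof of Proposition~\ref{Crit-set}.
\item Its vector is $y=p+em-\ell t_1\in M\cap w^\perp$, and
$u(y)=u(p)-\ell u(t_1)\le(\ell-e)u(t_1)-\ell u(t_1)=-e\,u(t_1)<0$.
\end{itemize}
Hence $\langle\mu\rangle_\R\cap V_w^\perp\subseteq\R w$, and so $\tau=\R_{\geq0}w$. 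This argument uses the integrality of $m$ to lift $p$ by $em$ inside its coset. It uses the interiority of $m$ to get $u(t_1)>0$ and to keep the head in $A_\R$. Neither enters your Step~3.
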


We obtain the following result as an analogous statement for each $w$-critical arrow in a simplicial cone.

\begin{thm}\label{polytope} 
Let 
$ v_1,\ldots,v_d \in  N_\R$ and   
$ u_1,\ldots,u_d  \in M_\R$ be bases of $N_\R$ and $M_\R$ that are the dual of each other, and let  $\sigma=\langle v_1,\ldots,v_d\rangle_{\R_{\geq0}} \subset N_\R$ and
$\sigma^\vee=\langle u_1,\ldots,u_d\rangle_{\R_{\geq0}} \subset M_\R$ be the simplicial cones that they generate.  Let $w\in B\setminus\{0\}$ and let $\mu$ be the minimal face of $\sigma$ containing $w$. Let  $ i  \in \{1,\dots, d\}$ be such that $v_{i}\in\mu$. If there exists $u\in M$ satisfying
\begin{equation*}
(u,w)=1,(u,v_{i})>0\text{ and }(u,v_j)\geq0\ (\forall j\neq i),
\end{equation*}
then there exists a $w$-critical arrow $\alpha$ whose tail $\tail_\alpha$ is contained in the ray $\R_{\geq0}u_{i}$.
\end{thm}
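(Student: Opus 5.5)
The plan is to reduce the statement to the non-emptiness of $\Crit_i^w$. By Proposition \ref{Crit-set}, every element of $\Crit_i^w$ is the vector of some $w$-critical arrow; tracing back through Lemma \ref{delta-Lambda}, this arrow has tail $c\,u_i$ on the ray $\R_{\geq0}u_i$. Here $u_i$ denotes the rescaled dual vector with $(u_i,w)=1$, which lies on the same ray as the $u_i$ of the statement. So it suffices to produce a nonzero lattice point $x\in(\delta_i^w)^\vee\cap M\cap w^\perp$ with $(x,-v_i)<c_{i,w}$.

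\textbf{Translation by $u$.} Choose $P\in(\delta_i^w)^\vee\cap M\setminus w^\perp$ realizing the minimum, so $(P,-v_i)=c_{i,w}$; such a point exists by Definition \ref{Crit-delta}, or via Proposition \ref{Crit-Hilb}. Since $P\in(\delta_i^w)^\vee$ we have $(P,-w)\geq0$, and $P\notin w^\perp$ forces $(P,w)<0$. Because $P\in M$ and $w\in N$, this pairing is an integer, so $(P,w)\leq-1$. Now set $x\coloneqq P+u$ and check three things.
\begin{enumerate}
\item $(x,w)=(P,w)+1\leq0$, i.e.\ $(x,-w)\geq0$.
\item For $j\neq i$, $(x,v_j)=(P,v_j)+(u,v_j)\geq0$.
\item $(x,-v_i)=c_{i,w}-(u,v_i)<c_{i,w}$.
\end{enumerate}
Items (1) and (2) give $x\in(\delta_i^w)^\vee\cap M$, since the generators of $\delta_i^w$ are the $v_j$ with $j\neq i$ together with $-w$. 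By the minimality in the definition of $c_{i,w}$, item (3) forces $x\in w^\perp$. Hence, as long as $x\neq0$, we get $x\in\Crit_i^w$ and we are done.

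\textbf{Degenerate case $x=0$.} The main obstacle is the possibility that $P=-u$. In that case $(u,v_j)=-(P,v_j)\leq0$ for $j\neq i$, so together with the hypothesis $(u,v_j)=0$ for all $j\neq i$. Thus $u$ is a positive multiple of $u_i$, and $(u,w)=1$ gives $u=u_i\in M$; moreover $c_{i,w}=(u_i,v_i)$. I first try to choose a different minimizer $P\neq-u$, if one exists, and rerun the argument above. Otherwise I look directly for elements of $(\delta_i^w)^\vee\cap w^\perp\cap M$ of small $-v_i$-level, using Lemma \ref{delta-dual}.
\begin{itemize}
\item If $L\neq\emptyset$, a primitive lattice point on a ray $\R_{\geq0}u_l$ with $l\in L$ lies in $(\delta_i^w)^\vee\cap w^\perp$ and has $-v_i$-level $0<c_{i,w}$. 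So it belongs to $\Crit_i^w$.
\item If $L=\emptyset$, the face $(\delta_i^w)^\vee\cap w^\perp$ is the cone spanned by the vectors $u_r-u_i$, each of $-v_i$-level exactly $(u_i,v_i)=c_{i,w}$. I would then need a lattice point in this face at a strictly smaller level, i.e.\ a lattice point of the form $\sum_r t_r(u_r-u_i)$ with $\sum_r t_r<1$.
\end{itemize}

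This last sub-case is where I expect the real difficulty. The argument must exploit that $u_i\in M$ together with the integrality of the $v_j$, or else show that this configuration, namely $P=-u_i$ being the only minimizer, cannot occur under the hypotheses. My first attempt would be to take a lattice point $y$ of the simplex-shaped slice $\Lambda^w_{(=c)}\cap\tau_i$ for a suitable fractional level $c<1$, and use Lemma \ref{delta-Lambda} to convert the arrow $(y,c\,u_i)$ into an element of $\Crit_i^w$.
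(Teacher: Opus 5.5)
Your translation step is correct and is essentially the paper's proof. The paper takes any $u'\in(\delta_i^w)^\vee\cap M\setminus w^\perp$, sets $u''=u'+eu$ with $e=(u',-w)$, and concludes that $-v_i$ attains its minimum on $(\delta_i^w)^\vee\cap M\setminus\{0\}$ only inside $w^\perp$; you do the same with a minimizer $P$ and the single translate $P+u$. The genuine gap is the sub-case you leave open ($L=\emptyset$, $u=u_i\in M$, $-u_i$ the only minimizer). It cannot be closed, because the statement is false there. Take $N=M=\Z^2$, $v_j=e_j$, $\sigma=\R^2_{\geq0}$, $w=(1,1)$, $i=1$, $u=(1,0)$; all hypotheses hold, and $\mu=\sigma$. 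Here $(\delta_1^w)^\vee=\{x_2\geq0,\ x_1+x_2\leq0\}$, and $c_{1,w}=1$ is attained only at $(-1,0)=-u$. The only lattice point of $(\delta_1^w)^\vee$ with $-x_1<1$ is $0$, so $\Crit_1^w=\emptyset$. This can also be seen directly. An integral arrow in $\Lambda^w_{(=c)}$ with tail $(c,0)$ has head $(c-k,k)$ with $1\leq k\leq c$, so $c\geq1$, and then $(c-1,0)\in\Lambda^w_{(<c)}$ is $\alpha$-integral. By symmetry there is no $w$-critical arrow at all, as Theorem~\ref{arrow-dim} predicts: $X=\A^2$ is smooth, so $\Delta=\{\sigma\}$, $\tau=\sigma$ and $V_w=0$. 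In particular, your proposed repair through a point of $\Lambda^w_{(=c)}\cap\tau_i$ at a fractional level $c<1$ cannot work: here that slice is $\{(0,c)\}$, which is $cu_1$-integral only for $c\in\Z$.

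The paper's proof has the same hole. It never checks that $u''\neq0$, and $u''=0$ occurs exactly when $u'=-eu$; since $(u',v_j)\geq0$ for $j\neq i$, this forces $(u,v_j)=0$ for all $j\neq i$, i.e.\ $u=u_i$. The same example also refutes Corollary~\ref{exist-crit}, since $\Lambda^w_{(=1)}\cap M=\{e_1,e_2\}\neq\emptyset$. Your argument, like the paper's, does prove the statement under the additional hypothesis $u\notin\R_{\geq0}u_i$, i.e.\ $(u,v_j)>0$ for some $j\neq i$. Then $P=-u$ would give $(P,v_j)<0$, contradicting $P\in(\delta_i^w)^\vee$, so $x\neq0$ automatically. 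Indeed, in your residual case one checks that $\Crit_i^w\neq\emptyset$ if and only if $\Lambda^w_{(=1)}\cap M$ contains a point other than $u_i$ lying off the facet $\tau_i$. That is, it holds if and only if a non-degenerate witness exists, so the witness $u=u_i$ carries no information. The witnesses actually used later in the paper, such as $u=(1,0,0)$ in Theorem~\ref{3-dim-terminal}, are non-degenerate.
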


\begin{proof}
Let $\delta_i=\langle v_1,\ldots,v_{i-1},-w,v_{i+1},\ldots,v_d\rangle_{\R_{\geq0}}$. We claim that every element of $\delta _i^\vee\cap M\setminus \{0\}$ where the linear function $-v_i$ attains the minimum is contained in $w^\perp$. To see this, let $u' \in \delta _i^\vee\cap M\setminus w^\perp$. Then, $e:=(u',-w)$ is a positive integer. 
Let $u'':=u'+eu$. For every $j\ne i$,  we have
\[
 (u'',v_j) = (u',v_j) + e (u,v_j) \geq 0
\]
and
\[
 (u'',-w) = (u',-w) + e (u,-w) = e + e\cdot (-1) =0.
\]
These imply that $u'' \in \delta_i^\vee\cap M \cap w^\perp$. On the other hand, from the assumption that $(u,-v_i)<0$, we have 
\[
(u'',-v_i) = (u',-v_i) + e (u,-v_i) < (u',-v_i). 
\]
Thus, $-v_i$ does not attain the minimum at $u'$, which shows the claim. 
The claim implies that the finite set $\Crit_i^w$ in Definition \ref{Crit-delta} is not empty. From Proposition \ref{Crit-set}, there exists a $w$-critical arrow $\alpha$ whose tail $\tail_\alpha$ is contained in the ray $\R_{\geq0}u_{i}$.
\end{proof}
\begin{cor}\label{exist-crit}
Let $\sigma$ be a simplicial cone of $N_\R$ and let $w\in B\setminus\{0\}$. If $\Lambda_{(=1)}^w\cap M$ is nonempty, then there exists a $w$-critical arrow.
\end{cor}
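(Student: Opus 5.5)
Fix a lattice point $u\in\Lambda_{(=1)}^w\cap M$; we will show that it satisfies the hypothesis of Theorem \ref{polytope} for a suitable index $i$ with $v_i\in\mu$.

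First, $u\in A_\R=\sigma^\vee$ gives $(u,v_j)\geq0$ for every $j$, and by definition of the level set we have $(u,w)=1$. So only the strict inequality $(u,v_i)>0$ is left, for some $i$ with $v_i\in\mu$.

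To get it, write $w=\sum_j a_jv_j$ in the basis $v_1,\ldots,v_d$. Since $\mu$ is the minimal face of the simplicial cone $\sigma$ containing $w$, we have $\mu=\langle v_j\mid a_j>0\rangle_{\R_{\geq0}}$; that is, $v_j\in\mu$ exactly when $a_j>0$. Then
\[
1=(u,w)=\sum_{j:\,v_j\in\mu}a_j(u,v_j),
\]
and every term is nonnegative. Hence at least one term is positive, so there is an $i$ with $v_i\in\mu$ and $(u,v_i)>0$.

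With this $i$, the vector $u$ satisfies all the conditions of Theorem \ref{polytope}. That theorem then gives a $w$-critical arrow whose tail lies on $\R_{\geq0}u_i$, and in particular a $w$-critical arrow exists.

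The only step needing care is the identification of $\mu$ with the cone spanned by the $v_j$ having $a_j>0$. This is standard for simplicial cones, since the faces of $\sigma$ are exactly the cones spanned by subsets of $\{v_1,\ldots,v_d\}$. I expect no real obstacle.
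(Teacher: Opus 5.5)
Your proposal is correct and follows the paper's proof: take a lattice point $u\in\Lambda_{(=1)}^w\cap M$ and check that it satisfies the hypotheses of Theorem \ref{polytope}. Your expansion $1=(u,w)=\sum_{v_j\in\mu}a_j(u,v_j)$ is more careful than the paper's own argument, which only notes that $u\neq 0$ forces some $(u,v_j)>0$ and never explicitly verifies the required condition $v_i\in\mu$.
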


\begin{proof}
For every element $u \in \Lambda_{(=1)}^w \cap M$ and for every $1\le j\le d$, we have   $(u,v_j)\ge 0$. Moreover, since $u\ne 0$, at least one of $(u,v_j)$ is strictly positive. Thus, the assumption of Theorem \ref{polytope} holds for some $1\le i \le d$ and the corollary follows.
\end{proof}

\subsection{Three-dimensional $\Q$-factorial terminal toric singularities}\label{3terminal}

Let $\sigma$ be a three-dimensional simplicial terminal cone. By the terminal lemma in \cite[pp.\ 34--36]{Oda88}, we may choose a suitable basis of $N$ such that $\sigma$ is generated by $v_1=(1,0,0),v_2=(0,1,0),v_3=(1,p,q)$ 
with $1\leq p<q$ and $\gcd(p,q)=1$.
\begin{lem}[{\cite[Proposition 2.1]{BGS95}}]
For $1 \le k \le q-1 $, we define $m_k=\min\{l\in\Z_{>0}\mid lq>kp\}$. 
Then, we have
\begin{equation*}
\Hilb_N(\sigma)=\{v_1,v_2,v_3\}\cup\{(1,m_k,k)\mid1\leq k\leq q-1\}.
\end{equation*}
\end{lem}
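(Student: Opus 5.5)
The plan is to describe the monoid $B=\sigma\cap N$ by explicit inequalities. Then I would show directly that the listed vectors generate $B$ and that each of them is irreducible. Since the Hilbert basis is the set of irreducible elements of $B$, these two facts together give the lemma.

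First I will describe $B$. Writing $(x,y,z)=av_1+bv_2+cv_3$ gives $c=z/q$, $a=x-z/q$ and $b=y-pz/q$. Hence
\[
B=\{(x,y,z)\in\Z^3\mid z\geq0,\ qx\geq z,\ qy\geq pz\}.
\]
For $1\leq k\leq q-1$ we have $\gcd(p,q)=1$ and $q\nmid k$, so $q\nmid kp$. The condition $lq>kp$ is therefore the same as $lq\geq kp$, and $m_k$ is the least $y$ with $(1,y,k)\in B$. Moreover, $x=1$ is the least admissible $x$ at height $z=k$, because $qx\geq k>0$ forces $x\geq1$. So $(1,m_k,k)\in B$.

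Next, generation. Take $(x,y,z)\in B$ and write $z=nq+k$ with $0\leq k<q$.
\begin{enumerate}
\item Subtract $nv_3$ to get $(x-n,\,y-np,\,k)$. This vector still satisfies the inequalities: $q(x-n)\geq z-nq=k$, and $q(y-np)\geq pz-pnq=pk$. So we may assume $z=k<q$.
\item If $k=0$, then $x,y\geq0$, and the vector is a nonnegative combination of $v_1$ and $v_2$.
\item If $k\geq1$, then $x\geq1$ and $y\geq m_k$ by minimality of $m_k$. Then $(x,y,k)-(1,m_k,k)=(x-1,\,y-m_k,\,0)\in B$, which is again a combination of $v_1$ and $v_2$.
\end{enumerate}
Hence the listed set generates $B$.

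Finally, irreducibility. The vectors $v_1,v_2,v_3$ are primitive generators of rays of $\sigma$, so they are irreducible. Now suppose $(1,m_k,k)=P+P'$ with $P,P'\in B\setminus\{0\}$.
\begin{enumerate}
\item Every element of $B$ has $x\geq0$, so one summand, say $P$, has $x$-coordinate $0$.
\item Then $qx\geq z$ forces $P=(0,y,0)$, and $y\geq1$ since $P\neq0$.
\item So $P'=(1,m_k-y,k)\in B$ with $m_k-y<m_k$. This contradicts the minimality of $m_k$.
\end{enumerate}
The vectors $(1,m_k,k)$ are distinct from one another because their third coordinates differ. They are distinct from $v_1,v_2,v_3$ because $0<k<q$. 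This completes the plan.

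There is no serious obstacle here. The only delicate point is checking that the reductions in the generation step stay inside $B$, and that rests on using the strict versus non-strict inequality correctly via $\gcd(p,q)=1$.
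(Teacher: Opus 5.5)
Your proof is correct and complete. The paper itself gives no proof of this lemma: it is quoted from \cite[Proposition 2.1]{BGS95}. So there is no in-paper argument to compare against, and your proposal serves as a self-contained verification of the cited fact.

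All your ingredients are sound:
\begin{itemize}
\item the inequality description $B=\{(x,y,z)\in\Z^3\mid z\ge 0,\ qx\ge z,\ qy\ge pz\}$;
\item the reduction of the height $z$ modulo $q$ by subtracting $n v_3$, followed by peeling off $(1,m_k,k)$;
\item irreducibility, where the $x$-coordinate forces one summand into $\Z_{\ge0}v_2$ and so contradicts the minimality of $m_k$;
\item the closing step: a generating set of the pointed monoid $B$ consisting of irreducible elements is exactly its Hilbert basis.
\end{itemize}

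One small detail you pass over is harmless. Identifying $m_k$ with the least integer $y$ satisfying $qy\ge pk$ also uses $pk>0$, that is, $p\ge 1$. This guarantees such a $y$ is automatically in $\Z_{>0}$, as the definition of $m_k$ requires.

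A side benefit of your coordinates: they give at once the expression $(1,m_k,k)=\frac{q-k}{q}v_1+\bigl(m_k-\frac{pk}{q}\bigr)v_2+\frac{k}{q}v_3$, which the paper uses right after the lemma. All three coefficients are strictly positive because $q\nmid pk$, the same fact you used to replace $lq>kp$ by $lq\ge kp$.
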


For each $w=(1,m_k,k)\in\Hilb_N(\sigma)$, it can be written by
\[
w=\frac{q-k}{q}v_1+\left(m_k-\frac{pk}{q}\right)v_2+\frac{k}{q}v_3.
\]
Thus, all elements of the Hilbert basis of $\sigma$ are in the relative interior $\sigma^\circ$ of $\sigma$. It is equivalent to $\mu=\sigma$ for each $w\in\Hilb_N(\sigma)$\cite[Corollary 3.6]{CMDY25}.

\begin{thm}\label{3-dim-terminal}
Let $X$ be a three-dimensional $\Q$-factorial terminal affine toric variety. Then every essential divisor over $X$ appears on $\widetilde{X}$ as a prime divisor.
\end{thm}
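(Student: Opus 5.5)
Since every $w=(1,m_k,k)\in\Hilb_N(\sigma)\setminus\{v_1,v_2,v_3\}$ lies in $\sigma^\circ$, we have $\mu=\sigma$, so $\mu^\perp=0$ and $\overline{V_w}=V_w$. By Theorem \ref{arrow-dim}, $\dim\tau=3-\dim V_w$. The ray $\R_{\geq0}w$ belongs to $\Delta$ exactly when $\dim\tau=1$, so it suffices to show $\dim V_w=2$. Since $V_w\subset w^\perp$, this amounts to exhibiting two linearly independent $w$-critical arrows. We will get them from Theorem \ref{polytope}, which produces a $w$-critical arrow with tail on $\R_{\geq0}u_i$. By Proposition \ref{Crit-set}, the vectors of such arrows lie in $\Crit_i^w$, which sits in the $2$-dimensional cone $(\delta_i^w)^\vee\cap w^\perp$.

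First we compute the dual basis: $v_1^\ast=(1,0,0)-\frac1q(0,0,1)\cdot$ (suitable), namely $u$'s with $u_1=(1,0,-1/q)$, $u_2=(0,1,-p/q)$, $u_3=(0,0,1/q)$ up to scaling. Next we look for lattice vectors $u=(x,y,z)\in M=\Z^3$ with $(u,w)=x+m_ky+kz=1$, $(u,v_1)=x$, $(u,v_2)=y$, $(u,v_3)=x+py+qz$, all nonnegative and with the relevant one positive. The natural candidates are $(1,0,0)$, which gives $(u,w)=1$, $(u,v_1)=1>0$, $(u,v_2)=0$, $(u,v_3)=1$, so Theorem \ref{polytope} applies with $i=1$; and $(0,1,-z)$-type vectors, for example $(1-m_k+k\cdot\lfloor\cdot\rfloor,\dots)$, chosen to apply the theorem with $i=2$ or $i=3$. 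Indeed $(1,0,0)\in\Lambda_{(=1)}^w\cap M$ already gives Corollary \ref{exist-crit}, but we need two independent directions. We will therefore find a second $u$ that applies with a different index $i\in\{2,3\}$. A candidate is $u=(0,1,-m)$ with suitable $m$, or $u=(1-q+\dots)$; this requires checking $x+py+qz\ge0$ together with $x+m_ky+kz=1$. We will exploit the defining inequality $(m_k-1)q\le kp<m_kq$.

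The arrows obtained this way have tails on different rays $\R_{\geq0}u_1$ and $\R_{\geq0}u_i$. We must still show their vectors are independent. We will compute $\Crit_1^w$ and $\Crit_i^w$ explicitly via the description $(\delta_i^w)^\vee=\langle -u_i,u_l,u_r-u_i\rangle$ from Lemma \ref{delta-dual}. The vectors in $\Crit_1^w$ lie on the edge side $\{(\cdot,v_1)\text{-facet}\}$, that is, they are of the form $\head-\tail$ with $\head\in\tau_1$ and $\tail\in\R u_1$. Hence $\vect_\alpha$ has a nonzero $u_1$-component opposite in sign, and similarly for index $i$. A vector in $w^\perp$ coming from $\Crit_1^w$ has negative $v_1$-pairing and nonnegative $v_i$-pairing, while one from $\Crit_i^w$ has negative $v_i$-pairing and nonnegative $v_1$-pairing. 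So the two vectors cannot be proportional, since they are nonzero and their pairings with $v_1$ have opposite signs (or one is zero while the other is not). This gives $\dim V_w\ge 2$.

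The main obstacle is the second step: finding, uniformly in $k$, $p$, $q$, an integral $u$ satisfying the hypotheses of Theorem \ref{polytope} for a second index. If no uniform $u$ works, we fall back on a direct analysis of $\Crit_i^w$. We would use Proposition \ref{Crit-Hilb} and the claim in the proof of Theorem \ref{polytope}, showing that the minimum of $-v_i$ on $(\delta_i^w)^\vee\cap M\setminus\{0\}$ is attained in $w^\perp$ by exploiting the terminal-lemma normal form.
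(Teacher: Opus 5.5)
Your reduction is the same as the paper's. Since $\mu=\sigma$, Theorem \ref{arrow-dim} reduces the claim to $\dim V_w=2$, and critical arrows come from Theorem \ref{polytope}.

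The step you call the main obstacle is actually immediate. Your own computation shows that $u=(1,0,0)$ satisfies $(u,w)=1$, $(u,v_3)=1>0$ and $(u,v_1),(u,v_2)\ge0$, so Theorem \ref{polytope} applies with $i=3$ as well as with $i=1$. The index $i=2$, which your candidates $(0,1,-m)$ aim at, can genuinely fail: for $p=2$, $q=5$, $k=3$, $w=(1,2,3)$, no integral $u$ satisfies the hypotheses with $i=2$.

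The real gap is the independence argument. For $x\in\Crit_1^w$ and $y\in\Crit_3^w$ you correctly get $(x,v_1)<0\le(y,v_1)$. But opposite signs are exactly what $x=cy$ with $c<0$ produces, so nothing is excluded. This is not hypothetical. By Lemma \ref{delta-dual}, the cones $(\delta_1^w)^\vee\cap w^\perp=\langle(pk-qm_k,q-k,m_k-p),(-k,0,1)\rangle_{\R_{\ge0}}$ and $(\delta_3^w)^\vee\cap w^\perp=\langle(k,0,-1),(0,k,-m_k)\rangle_{\R_{\ge0}}$ share the line $\R(k,0,-1)$ with opposite orientations. In the terminal case $p=1$, $q=2$, $k=1$ of Example \ref{basic-exam}, one indeed has $(-1,0,1)\in\Crit_1^w$ and $(1,0,-1)\in\Crit_3^w$. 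So taking one arrow from each family may give dependent vectors. Switching indices does not help: $(\delta_i^w)^\vee\cap w^\perp$ and $(\delta_j^w)^\vee\cap w^\perp$ always contain the opposite vectors $u_j-u_i$ and $u_i-u_j$.

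What is missing is a proof that some critical vector lies off this common line, and this is where the paper's work goes. Argue by contradiction and suppose $\dim V_w=1$.
\begin{enumerate}
\item Proportionality of $x\in\Crit_1^w$ and $y\in\Crit_3^w$ forces $c<0$ and $(x,v_2)=(y,v_2)=0$.
\item Hence $V_w=\R(k,0,-1)$, and $\Crit_3^w$ (nonempty) consists of positive multiples of $(k,0,-1)$. In particular $q-k=((k,0,-1),-v_3)<c_{3,w}$.
\item The vector $(k-m_k,1,-1)$ lies in $(\delta_3^w)^\vee\cap M\cap w^\perp$, using $m_k\le k$.
\item Its $-v_3$-value is $q-k-(p-m_k)\le q-k$, using $m_k\le p$.
\item By Definition \ref{Crit-delta} it therefore belongs to $\Crit_3^w$, yet it is not on $\R(k,0,-1)$. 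This is the contradiction.
\end{enumerate}
Your fallback, showing that the minimum of $-v_i$ is attained in $w^\perp$, only yields $\Crit_i^w\neq\emptyset$ and does not address independence.
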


\begin{proof}
Let $X$ be a three-dimensional $\Q$-factorial terminal affine toric variety associated to $\sigma$ and let $w=(1,m_k,k)$ be an element of the Hilbert basis of $\sigma$. Let $\delta_1$ be the cone generated by $-w,v_2,v_3$ and let $\delta_3$ be the cone generated by $v_1,v_2,-w$. Let $u=(1,0,0)\in M$. Since $(u,w)=1$, $(u,v_1)=1>0$, $(u,v_2)=0$ and $(u,v_3)=1>0$, Theorem \ref{polytope} implies that there exist two $w$-critical arrows $\alpha$ and $\alpha'$ whose tails $\tail_\alpha$ and $\tail_{\alpha'}$ are in the rays $\R_{\ge 0}u_1$ and $\R_{\ge 0}u_3$, respectively.

Now, we check that $\dim V_w=2$. The computation is similar to that in the proof of Theorem \ref{polytope}. Note that
\begin{align*}
\delta_1^\vee&=\langle(-q,0,1),(pk-qm_k,q-k,m_k-p),(-k,0,1)\rangle_{\R_{\geq0}}, \\
\delta_3^\vee&=\langle(k,0,-1),(0,k',-m_k'),(0,0,-1)\rangle_{\R_{\geq0}},
\end{align*}
where $k'=k/\gcd(k,m_k)$ and $m_k'=m_k/\gcd(k,m_k)$. Hence, we have
\begin{align*}
\delta_1^\vee\cap w^\perp&=\langle (pk-qm_k,q-k,m_k-p),(-k,0,1)\rangle_{\R_{\geq0}}, \\
\delta_3^\vee\cap w^\perp&=\langle(k,0,-1),(0,k',-m_k')\rangle_{\R_{\geq0}}.
\end{align*}
Suppose that $\dim V_w=1$. Then we see that $V_w=\langle (k,0,-1)\rangle_\R$. Since $m_k\leq k$, we have
\[
u'\coloneqq(0,1,-1)=\frac{k'-m_k'}{k'}(0,0,-1)+\frac{1}{k'}(0,k',-m_k')\in\delta_3^{\vee}.
\]
Notice that $(u',-w)=k-m_k$, so we have $u'+(k-m_k)u=(k-m_k,1,-1)\in\delta_3^\vee\cap w^\perp$. 
On the other hand, since $m_k\leq p$, we have
\begin{equation*}
(u'+(k-m_k)u,-v_3)=q-k-(p-m_k)\leq q-k=((k,0,-1),-v_3).
\end{equation*}
Since $u'+(k-m_k)u\notin\langle(k,0,-1)\rangle_\R$, there exists a vector $u''\in\Crit_3^w$ associated with some $w$-critical arrow 
which does not belong to $\langle(k,0,-1)\rangle_\R$. However, this contradicts the assumption that $V_w$ is one-dimensional. Thus, we obtain $\dim V_w=2$. 
Therefore, by Corollary \ref{min}, we obtain that the essential divisor associated with $w$ appears on $\widetilde{X}$ as a prime divisor.
\end{proof}
\begin{rem}\label{Kedzierski}
In \cite[Lemma 3.13]{Ked11}, it is proved that the $G$-Hilbert scheme associated to a three-dimensional $\Q$-factorial terminal affine toric variety has only conifold singularities, which are locally defined by equation $xy-zw$. 
This shows that the $G$-Hilbert scheme admits a small resolution, on which every essential divisor over the original variety appears by definition. Thus, we see that every essential divisor appears on the $G$-Hilbert scheme as well. This argument gives an alternative proof of the above theorem. 
\end{rem}

\subsection{Three-dimensional canonical abelian quotient singularities}\label{3can}

It is well known that three-dimensional $\Q$-factorial terminal toric singularities are cyclic quotient singularities; these were treated in the previous section. In this section, we study canonical abelian quotient singularities, which form a strictly larger class.

We begin by recalling some general notation. For later use, consider a finite (not necessarily cyclic) abelian group $G\subset\GL(d,k)$. Suppose that the characteristic of $k$ does not divide the order of $G$. Then all elements of $G$ are simultaneously diagonalizable. Thus, without loss of generality, we assume that it belongs to the group of diagonal matrices. We denote by $\overline{g}$ the element of $N$ corresponding to the diagonal matrix $g\in\GL(d,k)$. Let
\begin{equation*}
N\coloneqq\Z^d+\sum_{g\in G}\Z\overline{g}.
\end{equation*}
Define
\begin{equation*}
M\coloneqq N^\vee=\{x\in\R^d\mid\forall y\in N,(x,y)\in\Z\}\subset\Z^d.
\end{equation*}
Note that, to verify whether an element belongs to $M$, it suffices to consider elements of the form $z+g$ with $z\in\Z^d$ and $g$ a generator of $G$. Let $\sigma=(\R_{\geq0})^d$ be a region of $N_\R=N\otimes\R$ whose all entries are non-negative. Then the toric variety determined by $\sigma$ is isomorphic to $\A_k^d/G$.

The classification of three-dimensional canonical cyclic quotient singularities is given by Ishida and Iwashita \cite{II86} (see also \cite{SS23}).

\begin{prop}[\cite{II86}]\label{class-3can}
Let $G$ be a finite cyclic subgroup of $\GL(3,\C)$. Then $\A_\C^3/G$ has only canonical singularity if and only if
\begin{enumerate}
\item $G$ is of type $\frac{1}{r}(a,b,c)$ with $a+b+c\equiv0 \mod r$,
\item $G$ is of type $\frac{1}{r}(1,a,r-a)$ with $\gcd(a,r)=1$ and $a<r$,
\item $G$ is of type $\frac{1}{r}(1,r-1,a)$ with $\gcd(a,r)>1$ and $a<r$,
\item $G$ is of type $\frac{1}{4k}(1,2k+1,4k-2)$ with $k\geq2$ or
\item $G$ is of type $\frac{1}{9}(1,4,7)$ or $\frac{1}{14}(1,9,11)$.
\end{enumerate}
\end{prop}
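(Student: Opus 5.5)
The proof reduces, via the Reid--Tai criterion, to a number-theoretic statement about fractional parts; that statement is then settled by a Morrison--Stevens type analysis, with the sporadic cases coming out of it.

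\textbf{Step 1: reduction to age inequalities.} First I would reduce to the case where $G$ contains no pseudo-reflections. The quotient by the subgroup generated by pseudo-reflections is again an affine space (Chevalley--Shephard--Todd), on which the residual group acts, so $G$ may be replaced by a small cyclic group. For $G=\langle g\rangle$ of type $\frac1r(a,b,c)$, the Reid--Tai criterion says that $\A^3/G$ is canonical if and only if
\[
\left\{\tfrac{ka}{r}\right\}+\left\{\tfrac{kb}{r}\right\}+\left\{\tfrac{kc}{r}\right\}\geq 1\quad\text{for all }1\leq k\leq r-1 .
\]
Torically this is the same as asking that every lattice point of $N\cap\sigma^\circ$ has discrepancy $\geq 0$. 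The problem thus becomes the classification of triples $(a,b,c)$ satisfying this family of inequalities.

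\textbf{Step 2: sufficiency.} In case (1) the age of every $g^k$ is a positive integer, hence at least $1$. In cases (2) and (3), two of the weights sum to $0$ mod $r$, so two of the fractional parts sum to $1$ whenever neither vanishes. When one of them does vanish, the coprimality, or the condition $\gcd(a,r)>1$ together with the first weight being $1$, handles it directly. The sporadic groups of cases (4) and (5) are checked by listing the finitely many (or, for (4), uniformly parametrized) $k$.

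\textbf{Step 3: necessity.} This is the main part. I would follow Morrison--Stevens' proof of the terminal lemma. Write $f(k)=\{ka/r\}+\{kb/r\}+\{kc/r\}$ and consider the symmetric combination $f(k)+f(r-k)$. For each coordinate whose weight $ka$ is not divisible by $r$, the two fractional parts sum to $1$, so
\[
f(k)+f(r-k)=\#\{\text{coordinates with nonzero weight at }k\}\leq 3 .
\]
If $a+b+c\not\equiv 0\pmod r$, the canonical condition applied to both $k$ and $r-k$ forces strong restrictions. Following Morrison, I would then use a Fourier/Dedekind-sum style averaging over $k$, or alternatively the lattice-point approach in $\sigma\cap N$ (using that canonical means no interior point of $N$ below the plane $\{u_1+u_2+u_3=1\}$, where $u_1,u_2,u_3$ is the dual basis as in Theorem~\ref{polytope}). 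The goal of this step is to prove the dichotomy: either some pair of weights sums to $0$ mod $r$, giving cases (2)--(3), or $r$ is small relative to a bounded configuration.

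\textbf{Main obstacle.} I expect the residual non-Gorenstein case with no complementary pair to be the hardest step. The aim there is to show that only the family $\frac1{4k}(1,2k+1,4k-2)$ and the two sporadic triples survive. My first attempt would be to normalize $a=1$, using smallness to pick a generator with one weight equal to $1$, and then examine the specific values $k=\lceil r/b\rceil$, $k=\lceil r/c\rceil$ and $k=r-1$. These choices of $k$ should bound $b$ and $c$ in terms of $r$ and force linear relations among $b$, $c$ and $r$. The finitely many leftover configurations I would settle by a direct search, in the spirit of the computer checks used elsewhere in the paper.
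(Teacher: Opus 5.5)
The paper does not prove this proposition; it is quoted from Ishida--Iwashita \cite{II86}, so your sketch has to stand on its own. Your Step~2 (sufficiency) is fine. The genuine gap is Step~3. It is the whole content of \cite{II86}, and you state it as a goal rather than carry it out: the only inequality you actually derive, $1\le f(k)\le 2$, is far from the claimed dichotomy. The Morrison--Stevens averaging you invoke runs over $k\in(\Z/r)^\times$ and needs the four weights $a,b,c,-(a+b+c)$ to be units mod $r$. That is automatic for terminal (hence isolated) quotients, but it fails precisely in the cases you must produce. Already for $\frac19(1,4,7)$, the tuple $(1,4,7,6)$, with $6\equiv-(1+4+7)$, satisfies $\sum_i\{ka_i/9\}=2$ for every unit $k$, yet no two entries sum to $0$ mod $9$. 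In the non-isolated situation, where case (3) with $\gcd(a,r)>1$ and the whole family (4) live (note $\gcd(4k-2,4k)=2$), the decisive constraints come from non-unit $k$, and your plan gives no way of using them.

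Your treatment of the ``main obstacle'' has two further concrete defects. First, smallness does not let you normalise a weight to $1$: $\frac1{30}(22,3,5)$ is small (and canonical, being Gorenstein), but no weight is prime to $30$. In the non-Gorenstein case a unit weight exists only a posteriori, so this must be proved, not assumed. Second, finishing with a direct search over ``finitely many leftover configurations'' cannot work, because family (4) is infinite and nothing in your argument bounds $r$. You would need a structural argument forcing $r=4k$, $b=2k+1$, $c=4k-2$, plus a genuine bound on $r$ in all remaining cases.

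Two smaller points. The statement is only correct for small $G$: for $G=\langle\frac12(1,0,0)\rangle$ the quotient is smooth, but the type is not on the list. So your Step~1 replaces the stated claim by the correct one rather than reducing to it. Also, your toric reformulation is wrong as written, because lattice points on the $2$-dimensional faces of $\sigma$ also count. For example, $\frac1r(1,1,0)$ with $r\ge3$ is small and not canonical, although every point of $N\cap\sigma^\circ$ has coordinate sum $>1$.
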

Case (2) is a terminal singularity, which was discussed in Section \ref{3terminal}.

\begin{rem}\label{expert}
It is well known to experts, and proved in \cite{II86}, that the three-dimensional non-Gorenstein canonical abelian quotient singularities are exactly those of types (2)--(5) in the above classification. Thus, let $G$ be a finite abelian subgroup of $\GL(3,\C)$. Then $\A_\C^3/G$ has only canonical singularity if and only if $G\subset\SL(3,\C)$ or $G$ is of type (2)--(5).
\end{rem}

\begin{rem}
Let $G$ be a cyclic group generated by $\frac{1}{r}(a,b,c)$ and let $w_l=\frac{1}{r}(\overline{la},\overline{lb},\overline{lc})$ for $1\leq l\leq r-1$. Then, we see that
\[
\Hilb_N(\sigma)\subset\{e_1,e_2,e_3\}\cup\left\{w_l\mid1\leq l\leq r-1\right\}.
\]
Here, $e_1,e_2,e_3$ are standard basis of $\R^3$ and $\overline{a}$ denotes the unique integer $\overline{a}\in\{0,\ldots,r-1\}$ such that $\overline{a}\equiv a\mod r$. Each point on the left-hand side corresponds to some power of the generator of $G$.
\end{rem}

\begin{thm}\label{3-can}
Let $X=\A^3/G$ be a three-dimensional canonical cyclic quotient singularity and let $w$ be an element of the Hilbert basis of $\sigma\cap N$. Then the set $\Lambda_{(=1)}^w\cap M$ is empty if and only if $G$ is of type $\frac{1}{14}(1,9,11)$ with $w=\frac{1}{14}(7,7,7)$.
\end{thm}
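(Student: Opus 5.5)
We reduce the statement to a finite lattice-point search and then handle the infinite families of Proposition \ref{class-3can} separately from the sporadic cases.

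\textbf{Setup.} We use the coordinates of Section \ref{3can}: $\sigma=(\R_{\geq0})^3$ and $N=\Z^3+\Z\frac{1}{r}(a,b,c)$. Then
\[
M=\{x\in\Z^3\mid ax_1+bx_2+cx_3\equiv 0 \bmod r\}
\]
and $A_\R=(\R_{\geq0})^3$. For an element $w=w_l=\frac{1}{r}(\overline{la},\overline{lb},\overline{lc})$ of the Hilbert basis, the set $\Lambda^w_{(=1)}\cap M$ consists of the $x\in\Z_{\geq0}^3$ with
\[
\overline{la}\,x_1+\overline{lb}\,x_2+\overline{lc}\,x_3=r
\quad\text{and}\quad
ax_1+bx_2+cx_3\equiv0 \bmod r.
\]
The second condition is automatic when $l$ is a unit modulo $r$, since then it is equivalent to $l(ax_1+bx_2+cx_3)\equiv 0$. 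So the problem is to decide whether $r$ is representable as a non-negative integer combination of the coordinates of $rw$, subject to the congruence when $l$ is not a unit. The plan is to exhibit such an $x$ explicitly in each case of Proposition \ref{class-3can}, and to check that it fails only for $w=\frac{1}{14}(7,7,7)$ in $\frac{1}{14}(1,9,11)$.

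\textbf{Infinite families.}
\begin{enumerate}
\item Type (1), $a+b+c\equiv 0 \bmod r$ (the Gorenstein case). Every Hilbert basis element $w$ other than $e_1,e_2,e_3$ has $\overline{la}+\overline{lb}+\overline{lc}\in\{r,2r\}$. Hilbert basis elements with sum $2r$ still need handling, but for junior elements (sum $r$) we can take $x=(1,1,1)$. The congruence holds because $a+b+c\equiv0$. If some coordinate of $rw$ is $0$, I will instead use a smaller $x$ supported on the nonzero coordinates, or simply still $(1,1,1)$. For non-junior $w$ one can often use $x=e_j$ where $\overline{la_j}$ divides $r$ suitably, but I expect the cleaner route is to show that non-junior elements are not in the Hilbert basis for $\SL_3$ abelian quotients. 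I would try to cite or verify this rather than prove it directly.
\item Type (2) is terminal, and Section \ref{3terminal} already gives the needed $u=(1,0,0)$ in the paper's coordinates, so $\Lambda_{(=1)}\cap M\neq\emptyset$.
\item Type (3), $\frac{1}{r}(1,r-1,a)$. Take $x=(1,1,0)$. Then $\overline{l}+\overline{-l}=r$ whenever $l\not\equiv0$, and $1+(r-1)\equiv0$, so $x$ works for every $w_l$.
\item Type (4), $\frac1{4k}(1,2k+1,4k-2)$. Split by the parity of $l$ and by $l$ modulo $2k$, and find $x$ with small entries explicitly, for example $x=(2,0,1)$ or $(1,1,0)$ adjusted. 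This is a finite set of residue-class computations, uniform in $k$.
\end{enumerate}

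\textbf{Sporadic cases and the main obstacle.} For $\frac19(1,4,7)$ and $\frac1{14}(1,9,11)$, everything is finite: compute the Hilbert basis and search $x$ with $x_i\leq r$. For $w=\frac1{14}(7,7,7)$ (so $l=7$), the equation $7(x_1+x_2+x_3)=14$ forces $x_1+x_2+x_3=2$. The congruence is $x_1+9x_2+11x_3\equiv0 \bmod 14$, and running through the six candidates shows none works, so the set is empty. One must also check that $(7,7,7)/14$ actually lies in the Hilbert basis, which holds because it is not a sum of other group elements in $\sigma$.

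I expect the main obstacle to be the uniform verification for types (1) and (4). The difficulty is controlling which $w_l$ lie in the Hilbert basis and handling the non-unit $l$, where the congruence is a genuine extra constraint. Given that the paper announces computer use for this theorem, my fallback is to handle the infinite families by the explicit $x$ above and to verify the remaining finitely many cases by computer.
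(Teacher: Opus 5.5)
The overall strategy is the paper's: go through the Ishida--Iwashita list, exhibit a point of $\Lambda^w_{(=1)}\cap M$ in each case, and treat the sporadic types by finite computation. Types (2), (3) and (5) are fine.

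\begin{itemize}
\item For type (3), your $x=(1,1,0)$ works for every $w_l$, so you can skip the Hilbert basis there.
\item Your emptiness check for $\frac1{14}(7,7,7)$ is correct. Its Hilbert-basis membership follows because every nonzero point of $\sigma\cap N$ has age $\ge 1$, while this point has age $3/2<2$.
\item For type (1), taking $x=(1,1,1)$ on junior elements is what the paper's appeal to crepant resolutions amounts to. The missing fact must be cited rather than computed, since type (1) is an infinite family. It follows because a toric crepant resolution is a basic subdivision of $\sigma$ with junior rays. Hence every lattice point of $\sigma$ is a non-negative integer combination of junior points, and age-two points are decomposable.
\end{itemize}

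The genuine gap is type (4). It is also an infinite family in $k\ge 2$, so ``residue-class computations'' plus a computer fallback cannot close it. You must actually determine the Hilbert basis, and this is not a formality.

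Write $w_l=\frac1{4k}(\overline{l},\overline{l(2k+1)},\overline{l(4k-2)})$. For even $l$ with $2k<l<4k$ one gets $w_l=\frac1{4k}(l,l,8k-2l)$, and then $\Lambda^{w_l}_{(=1)}\cap M$ is \emph{empty}:
\begin{itemize}
\item The level equation forces $x_1+x_2\le 1$.
\item If $x_1+x_2=1$, it gives $x_3=\frac12$, which is not an integer.
\item If $x_1=x_2=0$, the congruence forces $2k\mid x_3$. This is incompatible with $(8k-2l)x_3=4k$, because $8k-2l\ge 4$.
\end{itemize}
So the theorem holds in type (4) only because these $w_l$ are decomposable: $w_l=2w_{l/2}$ if $4\mid l$, and $w_l=w_{(l+2)/2}+w_{(l-2)/2}$ if $l\equiv 2\bmod 4$.

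For the remaining classes you need explicit points:
\begin{itemize}
\item odd $l<2k$, where $w_l=\frac1{4k}(l,2k+l,4k-2l)$: take $x=(2,0,1)$;
\item odd $l>2k$, where $w_l=\frac1{4k}(l,l-2k,8k-2l)$: take $x=(0,2,1)$;
\item even $l\le 2k$, where $w_l=\frac1{4k}(l,l,4k-2l)$: take $x=(0,2,1)$.
\end{itemize}
Your suggested $(1,1,0)$ does not lie in $M$, since $1+(2k+1)=2k+2\not\equiv 0\bmod 4k$ for $k\ge 2$.
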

\begin{proof}
The proof proceeds case by case according to the above classification (see Proposition \ref{class-3can}).

For case (5), we can check the claim with direct computation, both by hand and by computer. The computer calculations are contained in the accompanying file. (We do not include this computation, since it is too long to be included.)  Hence, it suffices to show that $\Lambda_{(=1)}^w\cap M$ is nonempty in cases (1)--(4).

Since case (1) admits a crepant resolution \cite{Nak01, BKR01}, it follows from \cite[Lemma 6.4 and Example 6.12]{CMDY25} that $\Lambda_{(=1)}^w\cap M$ is nonempty.

Case (2) was already treated in Section \ref{3terminal}, where we proved that $\Lambda_{(=1)}^w\cap M$ is nonempty.

In case (3), for any $1\leq l,l'\leq r-1$ such that $l\neq l'$, since
\[
w_l+w_{l'}=\frac{1}{r}(l+l',2r-(l+l'),\overline{a(l+l')})
\]
and either $l+l'$ or $2r-(l+l')$ greater than $r$, each $w_l=\frac{1}{r}(l,r-l,\overline{al})$ does not decompose into two elements of $B\setminus\{0\}$. Thus, we have
\[
\Hilb_N(\sigma)=\{e_1,e_2,e_3\}\cup\left\{\frac{1}{r}(l,r-l,\overline{al})\mid1\leq l\leq r-1\right\}.
\]
Let $w_l=\frac{1}{r}(l,r-l,\overline{al})$ and let $u=(1,1,0)\in\sigma^\vee$. Then
\begin{equation*}
\left(u,\frac{1}{r}(1,r-1,a)\right)=1\in\Z,
\end{equation*}
so $u\in M$. Since $(u,w_l)=1$, we have $u\in \Lambda_{(=1)}\cap M$.

For case (4), the Hilbert basis of $B$ is precisely
\[
\Hilb_N(\sigma)=\{e_1,e_2,e_3\}\cup\{w_l\mid2\nmid l\text{ or }1\leq l\leq 2k\text{ and }2\mid l\}.
\]
First, we show that the right hand side is contained in the Hilbert basis of $B$. Recall that if a toric variety is canonical, then the discrepancy $a(w)<1$ implies that $w\in\Hilb_N(\sigma)$ (see Remark \ref{ess-disc}). Let $w_l=\frac{1}{4k}(\overline{l},\overline{l(2k+1)},\overline{l(4k-2)})$ be an element of the right hand side. If $2\nmid l$ and $1\leq l\leq2k$, then $w_l=\frac{1}{4k}(l,2k+l,4k-2l)$. Since $a(w)=1/2$, we have $w_l\in\Hilb_N(\sigma)$. If $2\nmid l$ and $2k+1\leq l\leq4k-1$, then $w_l=\frac{1}{4k}(l,l-2k,8k-2l)$. Since $a(w)=1/2$, we have $w_l\in\Hilb_N(\sigma)$. If $1\leq l\leq 2k$ and $2\mid l$, then $w_l=\frac{1}{4k}(l,l,4k-2l)$. Since $a(w)=0$, we have $w_l\in\Hilb_N(\sigma)$. Thus, the right hand side is contained in the Hilbert basis of $B$. We now show that other elements are not in the Hilbert basis of $\sigma$. Let $w_l=\frac{1}{4k}(\overline{l},\overline{l(2k+1)},\overline{l(4k-2)})$ where $2\mid l$ and $2k+1\leq l\leq 4k-1$. Then $w_l=\frac{1}{4k}(l,l,8k-2l)$. If $l=4m$ for some $m\in\Z_{>0}$, then
\begin{align*}
\frac{1}{4k}(l,l,8k-2l)&=\frac{1}{4k}(2m,2m,4k-2\cdot2m)+\frac{1}{4k}(2m,2m,4k-2\cdot2m) \\
&=w_{l/2}+w_{l/2}.
\end{align*}
If $l=4m-2$ for some $m\in\Z_{>0}$, then
\begin{align*}
\frac{1}{4k}(l,l,8k-2l)&=\frac{1}{4k}(2m,2m,4k-2\cdot2m)+\frac{1}{4k}(2m-2,2m-2,4k-2(2m-2)) \\
&=w_{(l+2)/2}+w_{(l-2)/2}.
\end{align*}
Hence $w$ is decomposable, and since the elements of Hilbert basis are indecomposable. Therefore, they does not belong to the Hilbert basis of $B$.

We now show that $\Lambda_{(=1)}^w\cap M\neq\emptyset$ for all $w\in\Hilb_N(\sigma)$. Let
\[
w_l=\frac{1}{4k}(\overline{l},\overline{l(2k+1)},\overline{l(4k-2)}).
\]
If $2\nmid l$ and $1\leq l\leq 2k$, then $w_l=\frac{1}{4k}(l,2k+l,4k-2l)$ and hence $u=(2,0,1)\in\Lambda_{(=1)}\cap M$. If $2\nmid l$ and $2k+1\leq l\leq4k-1$, then $w_l=\frac{1}{4k}(l,l-2k,8k-2l)$ and hence $u=(0,2,1)\in\Lambda_{(=1)}\cap M$. If $2\mid l$ and $1\leq l\leq2k$, then $w_l=\frac{1}{4k}(l,l,4k-2l)$ and hence $u=(0,2,1)\in\Lambda_{(=1)}\cap M$. Therefore, $\Lambda_{(=1)}^w\cap M\neq\emptyset$ for all $w\in\Hilb_N(\sigma)$.
\end{proof}

\begin{cor}\label{3-can-div}
Let $X=\A^3/G$ be a three-dimensional canonical abelian quotient singularity. Then every essential divisor over $X$ has center of positive dimension on $\widetilde{X}$.
\end{cor}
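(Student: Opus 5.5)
The plan is to reduce to Theorem \ref{3-can} and Corollary \ref{exist-crit}, and to handle the one exceptional case by direct computation. Let $E_w$ be an essential divisor over $X$, with $w\in\Hilb_N(\sigma)$ not a ray generator, and let $\mu$ be the minimal face of $\sigma$ containing $w$. By Corollary \ref{min} and Theorem \ref{arrow-dim}, the center of $E_w$ on $\widetilde X$ has dimension $d-\dim\tau=\dim(\mu^\perp+V_w)$. So it suffices to show $\mu^\perp+V_w\neq 0$.

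If $\mu\neq\sigma$, then $\mu^\perp\neq0$ and we are done. In general it is enough to produce a single $w$-critical arrow, since its vector is a nonzero element of $V_w$. By Corollary \ref{exist-crit}, such an arrow exists as soon as $\Lambda^w_{(=1)}\cap M\neq\emptyset$.

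To apply this, I would first split the abelian groups using Remark \ref{expert}. Either $G\subset\SL(3,\C)$, or $G$ is cyclic of one of the types (2)--(5) in Proposition \ref{class-3can}.

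In the first case $G$ need not be cyclic, but $X$ admits a crepant resolution \cite{Nak01,BKR01}. Then $\Lambda^w_{(=1)}\cap M\neq\emptyset$ for every $w\in\Hilb_N(\sigma)$ by \cite[Lemma 6.4 and Example 6.12]{CMDY25}, exactly as in case (1) of the proof of Theorem \ref{3-can}.

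For the cyclic types (2)--(5), Theorem \ref{3-can} gives $\Lambda^w_{(=1)}\cap M\neq\emptyset$ except when $G=\frac{1}{14}(1,9,11)$ and $w=\frac{1}{14}(7,7,7)$. In every non-exceptional case Corollary \ref{exist-crit} then yields a $w$-critical arrow, hence $V_w\neq0$, and the center has positive dimension.

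The main obstacle is the exceptional case $G=\frac1{14}(1,9,11)$, $w=\frac12(1,1,1)$, where $\mu=\sigma$. Theorem \ref{polytope} cannot be used here. Its hypothesis with $(u,v_j)\ge0$ for all $j$ and $(u,v_i)>0$ forces $u\in\sigma^\vee\cap M$ with $(u,w)=1$, i.e.\ $u\in\Lambda^w_{(=1)}\cap M$, which is empty. Instead I would run the algorithm of Section \ref{pseudo_code} explicitly. For each $i=1,2,3$:
\begin{enumerate}
\item form $\delta_i^w$;
\item compute the Hilbert basis of $(\delta_i^w)^\vee\cap M$, where $M$ is the index-14 superlattice dual to $N=\Z^3+\Z\frac1{14}(1,9,11)$;
\item read off $c_{i,w}$ from that Hilbert basis via Proposition \ref{Crit-Hilb};
\item enumerate $\Crit^w_{i,D'}$.
\end{enumerate}
Finding one nonzero element of some $\Crit^w_i$ already finishes the proof, by Proposition \ref{Crit-set}. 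I expect this to be a finite computation, checkable by computer as in the accompanying files. In practice I would look for a short lattice vector $u\in M\cap w^\perp$ lying in $(\delta_i^w)^\vee$ with $(u,-v_i)$ below the minimal value attained off $w^\perp$. Computing $\dim V_w$ at the same time would also show whether $E_w$ is in fact a divisor on $\widetilde X$.
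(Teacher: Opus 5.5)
Your route is the paper's. You reduce via Remark~\ref{expert} to the classification, use Theorem~\ref{3-can} with Corollaries~\ref{exist-crit} and~\ref{min} away from $G=\frac1{14}(1,9,11)$, $w=\frac12(1,1,1)$, and run the algorithm of Section~\ref{pseudo_code} in that case. Your explicit treatment of non-cyclic $G\subset\SL(3,\C)$ by the crepant-resolution argument is a point the paper leaves implicit, and it is fine. What is missing is the outcome of the computation. You only say you \emph{expect} some $\Crit_i^w$ to be nonempty. If all were empty, then $V_w=0=\mu^\perp$ and the center would be a point. So as written, the exceptional case is not proved.

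You can finish it by hand. First, a correction: since $N\supset\Z^3$, the dual $M=\{x\in\Z^3\mid x_1+9x_2+11x_3\equiv 0 \pmod{14}\}$ is an index-$14$ \emph{sub}lattice of $\Z^3$, not a superlattice. For $i=1$ one has $(\delta_1^w)^\vee=\{x\mid x_1+x_2+x_3\le 0,\ x_2,x_3\ge 0\}$. Write $u=(-s,u_2,u_3)$. Then membership in $M$ reads $s\equiv 9u_2+11u_3 \pmod{14}$, and $u\notin w^\perp$ reads $s>u_2+u_3$. For $u_2+u_3\le 2$ the residues of $9u_2+11u_3$ are $0,9,11,4,6,8$, so no $s\le 3$ occurs. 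The vector $(-4,2,0)$ gives $s=4$, hence $c_{1,w}=4$. The vector $u=(-3,1,2)$ lies in $M\cap w^\perp\cap(\delta_1^w)^\vee$ with $(u,-v_1)=3<4$, so $u\in\Crit_1^w$. It is the vector of the arrow with tail $(3,0,0)$ and head $(0,1,2)$ at level $c=3/2$, and Proposition~\ref{Crit-set} makes it $w$-critical. That gives exactly the one arrow you need. The same computation for $i=2,3$ gives $c_{2,w}=c_{3,w}=4$, with $(2,-3,1)\in\Crit_2^w$ and $(1,2,-3)\in\Crit_3^w$. Hence $\dim V_w=2$, and $\R_{\ge 0}w\in\Delta$, as the paper asserts.
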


\begin{proof}
The proof is partly computer-assisted. By Remark \ref{expert}, it suffices to verify the assertion for the types appearing in the classification of Proposition \ref{class-3can}. When $G$ is not of type $\frac{1}{14}(1,9,11)$ with $w=\frac{1}{14}(7,7,7)$, the assertion follows from  Theorem \ref{3-can}, Corollaries \ref{min} and \ref{exist-crit}. When  $G$ is of type $\frac{1}{14}(1,9,11)$ with $w=\frac{1}{14}(7,7,7)$,  performing the algorithm in Section \ref{pseudo_code} say with SageMath, we see that there are two $w$-critical arrows which are linearly independent. Therefore, by Corollary \ref{min}, the ray $\R_{\geq0}w$ belongs to $\Delta$.
\end{proof}

\subsection{Numerical experiments on three-dimensional $\Q$-factorial toric varieties.}

By a computer-assisted computation, we obtain the following.

\begin{thm}\label{many-example}
Let $\sigma$ be a simplicial cone generated by $(1,0,0)$, $(x_1,y_1,z_1)$, $(x_2,y_2,z_2)$ with $0\leq x_i,y_i,z_i\leq3$ and $y_i\leq z_i$ for $i=1,2$. Then every essential divisor over $X$ appears on $\widetilde{X}$ as a prime divisor.
\end{thm}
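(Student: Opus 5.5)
This is a finite, computer-assisted verification, and the plan is to reduce it to the algorithm of Section \ref{pseudo_code}. The parameter range is finite: each of $(x_i,y_i,z_i)$, $i=1,2$, has entries in $\{0,1,2,3\}$ with $y_i\le z_i$. So we enumerate all pairs of such triples. We discard pairs for which $(1,0,0),(x_1,y_1,z_1),(x_2,y_2,z_2)$ fail to be linearly independent, since then $\sigma$ is not nondegenerate. We also discard pairs whose three generators are not all primitive (we replace each by its primitive vector) or whose cone is smooth, since then there are no essential divisors. Cones differing by swapping the two generators are identified.

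For each remaining cone $\sigma$, the steps are as follows.

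\begin{enumerate}
\item Compute $\Hilb_N(\sigma)$, for instance with Normaliz through SageMath.
\item Remove the three ray generators. By \cite[Theorem 1.10]{BGS95}, together with the remark that BGS essential and essential divisors coincide in the $\Q$-factorial case, the remaining elements $w$ are exactly the essential divisors.
\item For each such $w$, determine the minimal face $\mu$ containing $w$ from the coordinates of $w$ in the basis $v_1,v_2,v_3$.
\item Run the algorithm: compute the dual basis and the bound $D'$, the Hilbert bases $H_i^w$ of $\delta_i^\vee\cap M$ for $v_i\in\mu$, the constants $c_{i,w}$, and the finite sets $\Crit_{i,D'}^w$.
\item Compute $\dim\tau=\dim\mu-(\dim V_w-\dim(\mu^\perp\cap V_w))$.
\end{enumerate}

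By Proposition \ref{Crit-vect} and Corollary \ref{Critw-set}, $\Crit_{D'}^w$ spans $V_w$, at least modulo $\mu^\perp$, which is all that enters the formula. Corollary \ref{min} then says that $E_w$ appears as a prime divisor exactly when $\dim\tau=1$. So the theorem reduces to the claim that the program returns $1$ for every pair $(\sigma,w)$ in the list.

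Two shortcuts reduce the load. First, whenever $\Lambda^w_{(=1)}\cap M\neq\emptyset$, Corollary \ref{exist-crit} already gives a critical arrow, but it only gives positive dimension of the center. So we cannot skip the rank computation, though we can use this as a sanity check. Second, whenever $\mu$ is a two-dimensional face, a single critical arrow not lying in $\mu^\perp$ suffices, because then $\dim\overline{V_w}=1=\dim\mu-1$.

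The main obstacle is correctness rather than conceptual difficulty. Step (d) of the algorithm must enumerate all lattice points of $\delta_i^\vee\cap M$ with $|u|\le D'$ and $(u,-v_i)<c_{i,w}$, and $M$ here is the dual lattice of $N=\Z^3$ in the chosen coordinates. We must be careful that $D'$ computed from the parallelotope of the $v_j^\ast$ really bounds $D$. It does: $A_\R\setminus\Theta$ lies in the fundamental parallelotope, because the $v_j^\ast$ scaled to be integral belong to $A$. We must also be careful that $c_{i,w}$ is computed via Proposition \ref{Crit-Hilb} from the Hilbert basis, not by a truncated search. With these checks in place, the output over the finite list constitutes the proof.
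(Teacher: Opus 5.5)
Your proposal is correct and matches the paper's approach. The paper proves this theorem only by a computer-assisted run of the algorithm of Section \ref{pseudo_code} over the finite list of cones, concluding via Corollary \ref{min} that $\dim\tau=1$ for every Hilbert basis element $w$ other than the ray generators. Your extra checks are sound and slightly more careful than the paper's own description of the algorithm: that $D'$ bounds $D$ because integral generators of the rays of $A_\R$ lie in $A$, that $c_{i,w}$ must be read off the Hilbert basis via Proposition \ref{Crit-Hilb}, and that $\Crit^w_{D'}$ only needs to span $V_w$ modulo $\mu^\perp$.
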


\begin{exam}
Theorem \ref{many-example} contains the following examples.
\begin{enumerate}
\item (cyclic non-canonical case) The cone $\sigma$ generated by $(1,0,0)$, $(1,1,3)$, $(2,2,3)$.
\item (non-cyclic non-canonical case) The cone $\sigma$ generated by $(1,0,0)$, $(2,0,3)$, $(2,3,3)$.
\end{enumerate}
\end{exam}

\subsection{Examples of four-dimensional toric varieties which are negative answers to Problem \ref{FB-ess}}

\begin{exam}\label{example}
The following examples are four-dimensional cones that admit an element $w$ of the Hilbert basis whose minimal cone $\tau\subset\sigma$ is of dimension two. We compute such examples by using SageMath \cite{sagemath} (see Section \ref{pseudo_code}).
\begin{table}[h]
\centering
\label{tab:hogehoge}
\begin{tabular}{|c|c|c|c|}
\hline
generators of $\sigma$ & $w$ & $\dim\tau$ & $a(w)$ \\ \hline
$(1,0,0,0),(0,1,0,0),(1,3,3,0),(4,3,1,4)$ & $(4,4,2,3)$ & 2 & $5/4$ \\ \hline
$(1,0,0,0),(0,1,0,0),(4,1,4,0),(3,4,1,4)$ & $(4,4,2,3)$ & 2 & $5/4$ \\ \hline
$(1,0,0,0),(1,3,1,1),(3,2,3,1),(3,2,2,3)$ & $(6,6,5,4)$ & $2$ & $9/5$ \\ \hline
$(1,0,0,0),(0,1,2,2),(1,2,1,0),(2,3,1,3)$ & $(3,5,3,4)$ & $2$ & $19/11$ \\ \hline
$(1,0,0,0),(0,1,3,1),(3,3,1,0),(1,3,3,3)$ & $(2,3,4,2)$ & $2$ & $21/18$ \\ \hline
$(1,0,0,0),(0,1,3,1),(2,2,1,0),(1,3,2,3)$ & $(2,3,4,2)$ & $2$ & $10/7$ \\ \hline
$(1,0,0,0),(0,1,3,3),(3,1,3,0),(2,3,2,1)$ & $(3,3,4,2)$ & $2$ & $6/7$ \\ \hline
$(1,0,0,0),(0,1,3,3),(3,1,3,0),(1,3,1,1)$ & $(2,3,3,2)$ & $2$ & $3/4$ \\ \hline
\end{tabular}
\end{table}

\end{exam}
We see that the first five examples admit a Hilbert basis resolution (see the definition in \cite{SS23,KY26}) using Macaulay2 \cite{NormalToricVarietiesSource}. We do not know whether other examples admit a Hilbert basis resolution. On the other hand, these examples suggest that there is a relation between the appearance of $\R_{\geq0}w$ and the discrepancy of the corresponding divisor. Thus, we propose the following problem.
\begin{prob}
Let $\sigma$ be a simplicial cone and $w\in B\setminus\{0\}$. Suppose that $\sigma$ admits a Hilbert basis resolution. Do we have $a(w)>1-1/|G|$ whenever the ray $\R_{\geq0}w$ does not belong to $\Delta$ ?
\end{prob}

\section{The volume of the level 1 subpolyhedron.}\label{abelian}

In this section, we consider the existence of $w$-critical arrows from the viewpoint of volume in the setting of abelian quotient singularities.
\begin{lem}\label{epsilon}
For $0<\varepsilon<1$, every integral arrow $\alpha$ in $\Lambda_{(\leq1-\varepsilon)}^w$ such that $\vect_\alpha\in w^\perp$ and the tail $\tail$ is a vertex of $\Lambda_{(\leq1-\varepsilon)}^w$ is a $w$-critical arrow.
\end{lem}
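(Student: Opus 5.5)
The plan is to check the three conditions of Definition \ref{crit-arrow} directly, with $c=1-\varepsilon$. Throughout, I work in the setting of this section: $\sigma=(\R_{\geq0})^d$ is simplicial, and $w\in\sigma^\circ\cap N$. Let $u_1,\ldots,u_d$ be the generators of the rays of $A_\R$, normalized so that $(u_j,w)=1$. Then $\Lambda^w_{(\leq 1-\varepsilon)}$ is the simplex with vertices $0$ and $(1-\varepsilon)u_1,\ldots,(1-\varepsilon)u_d$.

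\emph{Step 1: both endpoints lie in $\Lambda_{(=1-\varepsilon)}$.} First I rule out $\tail_\alpha=0$. If $\tail_\alpha=0$, then $\vect_\alpha\in w^\perp$ gives $w(\head_\alpha)=0$. Since $w$ is interior, $A_\R\cap w^\perp=\{0\}$, so $\head_\alpha=\tail_\alpha$, which is impossible for an arrow. Hence $\tail_\alpha=(1-\varepsilon)u_i$ for some $i$. Using $\vect_\alpha\in w^\perp$ again, we get $w(\head_\alpha)=w(\tail_\alpha)=1-\varepsilon=:c$. This gives condition (1).

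\emph{Step 2: no $\alpha$-integral point in $\Lambda_{(<c)}$.} Suppose $P\in A_\R$ satisfies $P-\tail_\alpha\in M$ and $w(P)<c$. Since $w\in N$, the number $w(P-\tail_\alpha)$ is an integer, and it is negative. Hence it is at most $-1$, so $w(P)\leq c-1=-\varepsilon<0$. This contradicts $P\in A_\R$ and $w\in\sigma$, which force $w(P)\geq0$. This gives condition (2). The integrality of the pairing between $M$ and $N$ is the whole point here, and it is why the level is taken strictly below $1$.

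\emph{Step 3: a tie-breaker.} I need $b\in w^{(\perp)}$ such that $w+b$ lies in a chamber of $\Delta^\ast$ whose closure contains $w$, and such that $b|_{\Lambda_{(=c)}}$ has its unique minimum at $cu_i$. I expect this to be the main technical point, though it should be handled exactly as in the proof of \cite[Lemma 6.2]{CMDY25}, which was already invoked in Proposition \ref{Crit-set}.

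My concrete construction is as follows. Set $b_0:=\lambda w-v_i$, with $\lambda$ chosen so that $b_0$ is orthogonal to $w$ in the inner product. Since $v_i(u_j)=0$ for $j\neq i$ and $v_i(u_i)>0$, we get $b_0(cu_j)=\lambda c$ for $j\neq i$, while $b_0(cu_i)<\lambda c$. Because $\Lambda_{(=c)}$ is the simplex spanned by the $cu_j$, the minimum of $b_0$ on it is attained uniquely at $cu_i$.

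Uniqueness of this minimizing vertex is an open condition on $b$. So I replace $b_0$ by a small generic perturbation $b$ inside $w^{(\perp)}$, chosen to avoid the finitely many walls $m^\perp$ with $m\in M_{\leq D}$. I then scale $b$ by a small positive factor. After this, $w+b$ lies in a chamber of $\Delta^\ast$ whose closure contains $w$, and $cu_i$ is still the unique minimizer. This gives condition (3), so $\alpha$ is $(w,b)$-critical.
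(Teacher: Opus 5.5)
Your proof is correct, and its core (Step 2: $w(P-\tail_\alpha)$ is a negative integer, which forces $w(P)\le -\varepsilon<0$) is exactly the paper's argument. The paper checks only that condition and leaves implicit what your Steps 1 and 3 supply: that the tail cannot be the vertex $0$ (this genuinely uses $w\in\sigma^\circ$), and that a tie-breaker exists making $(1-\varepsilon)u_i$ the unique minimizer, as condition (3) requires.
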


\begin{proof}
Let $\alpha=(\head,\tail)$ be an integral arrow in $\Lambda_{(\leq1-\varepsilon)}^w$ such that $\vect_\alpha\in w^\perp$ and the tail $\tail$ is a vertex of $\Lambda_{(\leq1-\varepsilon)}^w$. If there exists a point $P\in\Lambda_{(\leq1-\varepsilon)}^w$ such that $w(P)<w(\tail)$ and $P-\tail\in M$, then since $w(P)\geq0$ and $w(\tail)\leq1-\varepsilon$, we have
\[
-1<-1+\varepsilon\leq-w(\tail)\leq w(P)-w(\tail)<0.
\]
However, we have $w(P)-w(\tail)\in\Z$ which contradicts the above inequality. Therefore, $\alpha$ is a $w$-critical arrow.
\end{proof}

We use the notation introduced in Section \ref{3can}. In particular, $G\subset\GL(d,k)$ is a finite abelian subgroup, $N$ is a lattice $\Z^d+\sum_{g\in G}\Z\overline{g}$ and $\sigma=(\R_{\geq0})^d$ is a region of $N_\R$ whose all entries are non-negative.

Let
\begin{equation*}
w=\frac{1}{l}(a_1,\ldots,a_d)\in\sigma^\circ\cap N,w'=\frac{l}{a_1^2+\cdots+a_d^2}(a_1,\ldots,a_d)
\end{equation*}
so that $(w',w)=1$. We refer to
\begin{align*}
\Lambda_{(=1)}^w&\coloneqq\{(x_1,\ldots,x_d)\in\R^d\mid x_i\geq0,(-,w)=1\}, \\
\Lambda_{(\leq1)}^w&\coloneqq\{(x_1,\ldots,x_d)\in\R^d\mid x_i\geq0,(-,w)\leq1\}
\end{align*}
as the level 1 subpolyhedron and level $\leq1$ subpolyhedron, respectively.

\begin{prop}
We have
\begin{equation*}
\Vol_{M\cap w^\perp}(\Lambda_{(=1)}^w)=\frac{l^d}{(d-1)!a_1\cdots a_d|G|}.
\end{equation*}
Here the left hand side is defined to be the volume of $\Lambda_{(=1)}^w-w'\subset w^\perp$ with respect to the measure on $w^\perp$ normalized by the lattice $M\cap w^\perp$, and $|G|$ denotes the order of $G$.
\end{prop}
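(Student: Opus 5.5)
The plan is to compute the volume of the full-dimensional simplex $\Lambda_{(\leq1)}^w$ in two ways and compare. The first computation uses the Euclidean structure of $\R^d$. In the coordinates $x_1,\ldots,x_d$, the set $\Lambda_{(\leq1)}^w$ is the simplex with vertices $0$ and $(l/a_i)e_i$ for $i=1,\ldots,d$, since $(x,w)=\frac1l\sum a_ix_i$. Its Lebesgue volume is therefore
\[
\frac{1}{d!}\prod_{i=1}^d\frac{l}{a_i}=\frac{l^d}{d!\,a_1\cdots a_d}.
\]
Since $M$ is a sublattice of $\Z^d$ of index $[\Z^d:M]=[N:\Z^d]$, the volume normalized by $M$ is obtained by dividing by this index. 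The group $N/\Z^d$ is the image of $G\to(\R/\Z)^d$, $g\mapsto\overline g$. This map is injective because $G$ consists of diagonal matrices acting faithfully, so $[N:\Z^d]=|G|$. Hence
\[
\Vol_M(\Lambda_{(\leq1)}^w)=\frac{l^d}{d!\,a_1\cdots a_d|G|}.
\]

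The second computation expresses the same volume as a cone over the base $\Lambda_{(=1)}^w$ with apex $0$. The key point is the lattice-normalized "cone formula"
\[
\Vol_M(\Lambda_{(\leq1)}^w)=\frac{1}{d}\,h\,\Vol_{M\cap w^\perp}(\Lambda_{(=1)}^w),
\]
where $h$ is the lattice height of the hyperplane $\{w=1\}$ above $0$. To justify it, I will choose a basis $m_1,\ldots,m_{d-1}$ of $M\cap w^\perp$, which is saturated in $M$, and extend it by some $m_d\in M$ to a basis of $M$. The linear map $M\to\Z$, $m\mapsto(m,w)$, has image $\Z$ because $w\in N$ is primitive. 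Then $m_d$ can be chosen with $(m_d,w)=1$, and in the resulting coordinates the slab $\{w=t\}$ is a translate of $w^\perp$ at height $t$ in the $m_d$-direction.

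The main obstacle is exactly this step. If $w$ is not primitive in $N$, the image of $(\cdot,w)$ is $e\Z$ for some integer $e>1$. In that case the hyperplane $\{w=1\}$ sits at lattice height $1/e$, not $1$. I therefore expect to use the normalization $h=1$, i.e.\ to assume, as is implicit in the statement, that $w$ is primitive. This is the case of interest, since $w$ is meant to be a primitive element corresponding to a divisor; I will note this assumption explicitly. In these coordinates, Fubini gives
\[
\Vol_M=\int_0^1 t^{d-1}\Vol_{M\cap w^\perp}(\Lambda_{(=1)}^w)\,dt=\frac1d\Vol_{M\cap w^\perp}(\Lambda_{(=1)}^w).
\]
Translation by $w'$ does not affect the volume, which is why the statement measures $\Lambda_{(=1)}^w-w'$ inside $w^\perp$.

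Comparing the two computations yields
\[
\Vol_{M\cap w^\perp}(\Lambda_{(=1)}^w)=d\cdot\frac{l^d}{d!\,a_1\cdots a_d|G|}=\frac{l^d}{(d-1)!\,a_1\cdots a_d|G|},
\]
as claimed.
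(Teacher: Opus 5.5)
Your proof is correct and follows essentially the paper's route: compute the Lebesgue volume $l^d/(d!\,a_1\cdots a_d)$ of the simplex $\Lambda_{(\le1)}^w$ and relate it to the base $\Lambda_{(=1)}^w$ by the cone formula. The paper uses the Euclidean height $|w|^{-1}$ and then converts with $\det(M\cap w^\perp)=|G||w|$ (via Martinet), while you first normalize by $M$ (index $|G|$ in $\Z^d$) and then use lattice height $1$ from an adapted basis; this is the same determinant identity made explicit, and your stated primitivity assumption on $w$ is exactly what the paper uses implicitly when it writes $\det(\R w\cap N)=\det(\Z w)$.
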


\begin{proof}
The hyperplane $(-,w)=1$ is given by
\begin{equation*}
a_1x_1+\cdots+a_dx_d=l.
\end{equation*}
Thus the simplex $\Lambda_{(\leq1)}^w$ has the vertices $0,\frac{l}{a_1}e_1,\ldots,\frac{l}{a_d}e_d$. Therefore,
\begin{align*}
\Vol_{\Z^d}(\Lambda_{(\leq1)}^w)&=\Vol_{\Z^d}(\text{the $d$-simplex with vertices $0,e_1,\ldots,e_d$})\times\frac{l}{a_1}\cdots\frac{l}{a_d} \\
&=\frac{l^d}{d!a_1\cdots a_d}.
\end{align*}
Here, $\Vol_{\Z^d}$ is the measure normalized by the standard lattice $\Z^d$, that is, the usual Lebesgue measure of $\R^d$. On the other hand, since $\Lambda_{(\leq1)}^w$ is a cone over $\Lambda_{(=1)}^w$ of height $|w'|=|w|^{-1}$, we have
\begin{equation*}
\Vol_{\Z^d}(\Lambda_{(\leq1)}^w)=\frac{1}{d}|w'|\Vol_{\Z^d}(\Lambda_{(=1)}^w).
\end{equation*}
Hence,
\begin{align*}
\Vol_{\Z^d}(\Lambda_{(=1)}^w)&=d|w|\Vol_{\Z^d}(\Lambda_{(\leq1)}^w) \\
&=d\frac{\sqrt{a_1^2+\cdots+a_d^2}}{l}\frac{l^d}{d!a_1\cdots a_d} \\
&=\frac{l^{d-1}\sqrt{a_1^2+\cdots+a_d^2}}{(d-1)!a_1\cdots a_d}.
\end{align*}
Next, we transform $\Vol_{\Z^d}(\Lambda_{(=1)}^w)$ to $\Vol_{M\cap w^\perp}(\Lambda_{(=1)}^w)$. By \cite[Corollary 1.3.5]{Mar03},
\begin{equation*}
\det(N)\det(M\cap w^\perp)=\det(\R w\cap N)=\det(\Z w).
\end{equation*}
Since
\[
\det(M\cap w^\perp)=|G||w|=\frac{|G|}{l}\sqrt{a_1^2+\cdots+a_d^2},
\]
we obtain
\begin{equation*}
\Vol_{M\cap w^\perp}(\Lambda_{(=1)}^w)=\frac{\Vol_{\Z^d}(\Lambda_{(=1)}^w)}{\det(M\cap w^\perp)}=\frac{l^d}{(d-1)!a_1\cdots a_d|G|}.
\end{equation*}
\end{proof}

\begin{thm}\label{volume}
If $\Vol_{M\cap w^\perp}(\Lambda_{(=1)}^w)>1$, then there exists at least one $w$-critical arrow in $A_\R$.
\end{thm}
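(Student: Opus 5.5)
Under the hypothesis $\Vol_{M\cap w^\perp}(\Lambda_{(=1)}^w)>1$, the plan is to produce an integral arrow satisfying the hypotheses of Lemma \ref{epsilon}, using Blichfeldt's theorem. Blichfeldt's theorem says: a measurable set $S$ in a real vector space $W$ with lattice $L$, with $\Vol_L(S)>1$, contains two distinct points $x,y$ with $x-y\in L$. We apply it to $W=w^\perp$, $L=M\cap w^\perp$, and the slightly shrunken slice $S_\varepsilon\coloneqq\Lambda_{(=1-\varepsilon)}^w-(1-\varepsilon)w'$ for small $\varepsilon>0$. This set is $(1-\varepsilon)(\Lambda_{(=1)}^w-w')$, so its volume is $(1-\varepsilon)^{d-1}\Vol_{M\cap w^\perp}(\Lambda_{(=1)}^w)$. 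Since the hypothesis is a strict inequality, this is still $>1$ for $\varepsilon$ small enough. Blichfeldt then gives two distinct points $P,Q\in\Lambda_{(=1-\varepsilon)}^w$ with $P-Q\in M\cap w^\perp$, i.e. a nonzero integral vector in $w^\perp$ realised inside the level $1-\varepsilon$ slice.

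Next we move the tail to a vertex, since Lemma \ref{epsilon} needs the tail to be a vertex of $\Lambda_{(\le 1-\varepsilon)}^w$. Here $\sigma=(\R_{\ge0})^d$ and $w\in\sigma^\circ$, so $\Lambda_{(\le 1-\varepsilon)}^w$ is the simplex with vertices $0$ and $t_i\coloneqq\frac{(1-\varepsilon)l}{a_i}e_i$. The idea is a translation argument. Having $P,Q$ in the slice with $v\coloneqq P-Q\in M$, we want a vertex $t_i$ with $t_i+v$ or $t_i-v$ still in the simplex. Write $v=\sum v_je_j$ with $\sum a_jv_j=0$ and $v\neq0$, so some coordinate $v_i<0$ and some $v_k>0$. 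Consider the arrow with tail $t_i$ and head $t_i+v$, choosing $i$ with $v_i<0$. The head has $w$-value $1-\varepsilon$. Its $j$-th coordinates for $j\ne i$ equal $v_j$, which could be negative, so it may fail to lie in $A_\R$.

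To fix this, the plan is to choose $v$ more cleverly, not just any Blichfeldt difference. Concretely, use the Blichfeldt pair and replace $v$ by the difference from a vertex via the simplex structure. Since $P,Q$ lie in the simplex $\Lambda_{(=1-\varepsilon)}$ (which is $(d-1)$-dimensional with vertices $t_1,\dots,t_d$), write $Q=\sum\lambda_jt_j$ with $\lambda_j\ge0$ and $\sum\lambda_j=1$. Pick $i$ minimising $P_i/Q_i$ among indices with $Q_i>0$. The ratio $\rho=P_i/Q_i$ is $<1$, since both points have the same $w$-value and $P\ne Q$. The point $P-Q+t_i\cdot(\text{appropriate scaling})$ is then nonnegative only along certain directions, which is not clean.

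An alternative route avoids this. Instead of requiring the tail at a vertex, prove directly from Definition \ref{crit-arrow} that an arrow $(P,Q)$ in the level $1-\varepsilon$ slice yields a $w$-critical arrow. Condition (2) holds automatically by the integrality argument in the proof of Lemma \ref{epsilon}, since no $\alpha$-integral point lies below level $1-\varepsilon$ (its $w$-value would differ by a nonzero integer). For condition (3), pick a tie-breaker $b$, which exists by the proof of \cite[Lemma 6.2]{CMDY25}. The minimum of $b$ on the slice is attained at a unique vertex $t_i$. The segment direction $v=P-Q$ then gives the arrow $(t_i+v,t_i)$, provided $t_i+v\in A_\R$. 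If it is not, use $-v$ instead, or reduce to the claim that the non-emptiness of $\Crit_i^w$ for the relevant $i$ follows from Proposition \ref{Crit-set}.

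The main obstacle is this last step: ensuring that the head stays in $A_\R$ when the tail is forced to be the $b$-minimal vertex. What I would try first is to exploit the freedom in $\varepsilon$ and translation. The set of $x$ in the slice with $x+v$ also in the slice is a nonempty convex polytope $K$ (it contains $Q$). As $\varepsilon$ varies, rather than fixing the level, choose the level $c\le 1-\varepsilon$ minimal such that some $\alpha$-integral pair with vector in $M\cap w^\perp$ exists in $\Lambda_{(=c)}$. At the minimal $c$, the polytope $K$ degenerates and contains a vertex of the slice. That point is a valid critical tail, and Corollary \ref{Critw-set} then supplies the critical arrow.
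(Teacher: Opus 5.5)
\textbf{Where the proof breaks.} Your route is the paper's: Blichfeldt's theorem on the slice at level $1-\varepsilon$, followed by Lemma~\ref{epsilon}. You correctly isolate the one nontrivial step. Condition (3) forces the tail to be a vertex of the slice, and the paper's proof handles this only with the sentence ``we may assume that the tail is a vertex''.

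Your justification for this step does not work. Fix $v\in M\cap w^\perp\setminus\{0\}$. Then
\[
K_c=\Lambda^w_{(=c)}\cap(\Lambda^w_{(=c)}-v)=\{x:(x,w)=c,\ x_j\ge\max(0,-v_j)\ \forall j\}.
\]
This set first becomes nonempty at $c_v=\frac1l\sum_j a_j\max(0,-v_j)$, where it is the single point $(\max(0,-v_j))_j$.
\begin{itemize}
\item That point is a vertex of the slice only if $v$ has exactly one negative entry. Passing to $-v$ helps only if $v$ has exactly one positive entry.
\item For $v=(1,1,-1,-1)$ the point is the edge midpoint $(0,0,1,1)$.
\item Minimising over $v$ as well changes nothing, since nothing forces the minimiser to have a single negative entry.
\end{itemize}
Your second route has a separate problem: $b$ dictates the vertex $t_i$ while Blichfeldt dictates $v$, and these cannot be chosen independently. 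You would have to choose the vertex from $v$ first, and then a tie-breaker adapted to that vertex (as in the proof of Proposition~\ref{Crit-set}).

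\textbf{The step fails already for $d=4$.} Take $M=\Z(1,1,-1,-1)+e\Z^4$, which corresponds to $G=\{\mathrm{diag}(\zeta_1,\dots,\zeta_4):\zeta_j^e=1,\ \zeta_1\zeta_2=\zeta_3\zeta_4\}$ with $|G|=e^3$. Take $w=\frac1e(1,1,1,1)\in\sigma^\circ\cap N$. The paper's volume formula gives $\Vol_{M\cap w^\perp}(\Lambda^w_{(=1)})=e/6$, which exceeds $1$ once $e\ge7$.

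Now suppose a $w$-critical arrow exists. Its tail is a vertex $ce\,e_i$ of the level-$c$ slice.
\begin{itemize}
\item If $c\ge1$, the point $(c-1)e\,e_i$ violates condition (2).
\item If $c<1$, one needs $v\in M\cap w^\perp\setminus\{0\}$ with $v_j\ge0$ for $j\ne i$ and $v_i>-e$. Write $v\equiv\alpha(1,1,-1,-1)\bmod e$.
  \begin{itemize}
  \item If $e\mid\alpha$, every entry is a multiple of $e$. Together with $\sum_j v_j=0$ this forces $v=0$.
  \item Otherwise the entries $v_j$ with $j\ne i$ are positive. Among them both residues $r$ and $e-r$ occur, so $\sum_{j\ne i}v_j\ge e+1$. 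This gives $v_i\le-e-1$, a contradiction.
  \end{itemize}
\end{itemize}
So there is no $w$-critical arrow at all. Yet integral arrows in a slice below level $1$ do exist, for instance $(\head,\tail)=((1,1,0,0),(0,0,1,1))$ at level $2/e$; they just never have a vertex as tail.

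\textbf{What survives for $d=3$.} Here the missing step is true. The slice $T$ is a triangle, and $T-T$ is the hexagon tiled by the six triangles $\pm(T-t_i)$, where the $t_i$ are the vertices of $T$. So the Blichfeldt vector satisfies $\pm v\in T-t_i$ for some $i$, and $(t_i\pm v,t_i)$ is the required vertex-tailed arrow.

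Thus your proposal, like the paper's proof, goes through only for $d=3$. In higher dimension the statement needs a stronger hypothesis, one that produces a nonzero lattice point in some translate $\Lambda^w_{(=1)}-u_i$ off its far facet.
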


\begin{proof}
Suppose that $\Vol_{M\cap w^\perp}(\Lambda_{(=1)}^w)>1$. Let
\begin{equation*}
\Lambda_{(\leq1-\varepsilon)}^w\coloneqq\{x\mid x_1\geq0,\ldots,x_d\geq0,(-,w)=1-\varepsilon\}\ (0<\varepsilon\ll1).
\end{equation*}
Since $\Vol_{M\cap w^\perp}(\Lambda_{(\leq1-\varepsilon)}^w)>1$, from Blichfeldt's theorem \cite{Bli14}, there exist two points $\head,\tail\in \Lambda_{(\leq1-\varepsilon)}^w$ such that $\head-\tail\in M\cap w^\perp$. Namely, $\Lambda_{(\leq1-\varepsilon)}^w$ contains an integral arrow. We may assume that the tail $\tail$ is a vertex of $\Lambda_{(\leq1-\varepsilon)}^w$. Then, from Lemma \ref{epsilon}, the arrow $(\head,\tail)$ is a $w$-critical arrow in $A_\R$. 
\end{proof}
We now consider whether the volume is greater than one. We see that
\begin{align}
&\Vol_{M\cap w^\perp}(\Lambda_{(=1)}^w)=\frac{l^d}{(d-1)!a_1\cdots a_d|G|}>1 \notag \\
&\Leftrightarrow\frac{a_1\cdots a_d}{l^d}<\frac{1}{|G|(d-1)!} \label{vol_log}\\
&\Leftrightarrow\sum_{i=1}^d\log\frac{a_i}{l}<-\log|G|(d-1)!. \notag
\end{align}
Moreover, we also obtain
\begin{align*}
\eqref{vol_log}&\Leftrightarrow\sqrt[d]{\frac{a_1\cdots a_d}{l^d}}<\sqrt[d]{\frac{1}{|G|(d-1)!}} \\
&\Leftarrow\frac{1}{d}\sum_{i=1}^d\frac{a_i}{l}<\sqrt[d]{\frac{1}{|G|(d-1)!}} \\
&\Leftrightarrow\sum_{i=1}^d\frac{a_i}{l}<\sqrt[d]{\frac{d^d}{|G|(d-1)!}}=d\sqrt[d]{\frac{1}{|G|(d-1)!}}\eqqcolon H_{d,|G|}.
\end{align*}
We now compute several values of $H_{d,|G|}$. For a fixed $d$, the function $H_{d,|G|}$ is decreasing in $|G|$.
\begin{equation*}
H_{3,13}=1.0126\cdots,H_{4,42}=1.00394\cdots,H_{5,130}=1.00032\cdots,\ldots
\end{equation*}
\begin{cor}
If $|G|\leq13$ for $d=3$, $|G|\leq42$ for $d=4$, or $|G|\leq130$ for $d=5$, then every toric divisor $E$ over $X$ associated to $w\in\sigma^\circ\cap N$ with $a(E)\leq0$ has center of positive dimension on $\widetilde{X}$.
\end{cor}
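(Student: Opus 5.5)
The plan is to reduce the corollary to Theorem \ref{volume} by way of the chain of implications displayed just before it, together with Corollary \ref{min}.

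First, I would express the discrepancy in the coordinates of this section. Take $w=\frac{1}{l}(a_1,\ldots,a_d)\in\sigma^\circ\cap N$, with $\sigma=(\R_{\geq0})^d$ generated by $e_1,\ldots,e_d$; these are primitive in $N$ because $G$ contains no pseudo-reflections, or because the rays correspond to the toric divisors of $X$ after the usual reduction. Then the standard toric formula gives
\[
a(E_w)=\sum_{i=1}^d\frac{a_i}{l}-1 .
\]
This holds because $K_X$ corresponds to the piecewise-linear function taking the value $-1$ on each $e_i$, which is linear on $\sigma$ and equal to $-(1,\ldots,1)$. Hence $a(E_w)\le 0$ is equivalent to $\sum_i a_i/l\le 1$.

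Second, I would check the numerics. For $d=3,4,5$ and $|G|\le 13,42,130$ respectively, the monotonicity of $H_{d,|G|}$ in $|G|$ together with the listed values $H_{3,13},H_{4,42},H_{5,130}>1$ gives $H_{d,|G|}>1$. Therefore
\[
\sum_i a_i/l\le 1<H_{d,|G|}.
\]
By the AM--GM chain displayed before the corollary, this inequality implies \eqref{vol_log}, which is equivalent to $\Vol_{M\cap w^\perp}(\Lambda^w_{(=1)})>1$ by the volume proposition. Theorem \ref{volume} then yields a $w$-critical arrow $\alpha$, so $\vect_\alpha\neq0$ lies in $V_w$.

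Third, I would convert this into a statement about the center. Since $w\in\sigma^\circ$, the minimal face is $\mu=\sigma$, so $\mu^\perp=0$ and $\overline{V_w}=V_w$. Theorem \ref{arrow-dim} and Corollary \ref{min} give
\[
\dim\tau=d-\dim V_w\le d-1 .
\]
The center of $E_w$ on $\widetilde X$ is the orbit closure $\overline{O_\tau}$, whose dimension is $d-\dim\tau\ge 1$.

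The main point needing care is the first step. I need to justify that the divisors over $X$ are identified with $w=\frac1l(a_1,\ldots,a_d)$ with the $e_i$ primitive, and that the discrepancy formula holds in this normalization. I would justify this by noting that $X=\A^d/G$ with $N=\Z^d+\sum\Z\overline g$, where each $e_i$ spans a ray of $\sigma$. If pseudo-reflections are present, the discrepancy formula changes, so I would assume, as is implicit in the section, that $G$ is small. The Blichfeldt step is already contained in Theorem \ref{volume}, so I would not revisit it.
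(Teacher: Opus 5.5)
Your proposal is correct and is exactly the argument the paper intends. The discrepancy is $a(E_w)=\sum_i a_i/l-1$, so $a(E)\le 0$ gives $\sum_i a_i/l\le 1<H_{d,|G|}$. The AM--GM chain then gives volume $>1$, Theorem \ref{volume} gives a $w$-critical arrow, and Theorem \ref{arrow-dim}/Corollary \ref{min} with $\mu=\sigma$ give a center of dimension $\dim V_w\ge 1$.

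The smallness assumption is unnecessary. If the primitive ray generators are $e_i/r_i$ with $r_i\ge 1$, then $a(E_w)=\sum_i r_ia_i/l-1\ge\sum_i a_i/l-1$, so $a(E)\le 0$ still forces $\sum_i a_i/l\le 1$. Neither the volume formula nor Theorem \ref{volume} uses smallness.
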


\bibliographystyle{alpha}
\bibliography{critical_arrow}

\end{document}